\documentclass{amsart}
%\pdfoutput=1
\usepackage{graphicx}
\usepackage{amsthm}
\usepackage{amsmath}
\usepackage{amssymb}
\usepackage{amscd}

\newtheorem{theorem}{Theorem}[section]
\newtheorem{lemma}[theorem]{Lemma}
\newtheorem{proposition}[theorem]{Proposition}
\newtheorem{corollary}[theorem]{Corollary}

\theoremstyle{definition}

\theoremstyle{remark}
\newtheorem{remark}[theorem]{Remark}
\newtheorem*{remarks}{Remarks}

\numberwithin{equation}{section}
%%%%%%%%%%%%%%%%%%%%%%%%%%%%%%%%

%%%%%%%%%%%%%%%%%%%%%%%%%%%%%%%% definitions
\newcommand\Nn{\mathbb{N}}

\newcommand\Zz{\mathbb{Z}}
\newcommand\Rr{\mathbb{R}}
\newcommand{\Rrm}[1]{{\mathbb{M}^{#1}}}
\newcommand{\Rrv}[1]{\mathbb{R}^{#1}}
\newcommand{\SO}[1]{\mathop{SO}(#1)}
\newcommand{\id}{\operatorname{Id}}
\newcommand{\tensor}[1]{\mathbb{#1}}
\newcommand{\tsym}{\mathbb{T}_{\sym}}
\DeclareMathOperator{\grad}{\nabla\!}
\DeclareMathOperator{\ud}{\, \mathrm{d}}
\DeclareMathOperator{\dist}{\operatorname{dist}}
\DeclareMathOperator{\sym}{\operatorname{sym}}
\DeclareMathOperator{\asym}{\operatorname{skw}}
\newcommand{\esssup}{\mathop{\operatorname{ess\,sup}}}
\DeclareMathOperator{\trace}{\operatorname{tr}}
\newcommand{\transpose}{^\textrm{\footnotesize T}}
\newcommand{\norm}[1]{\left\Vert #1\right\Vert}
\newcommand{\abs}[1]{\left\vert #1\right\vert}
\newcommand{\iprod}[2]{\left\langle#1,\,#2\right\rangle}
\newcommand{\dprod}[2]{\left\langle#1,\,#2\right\rangle}
\newcommand{\fs}{}
\newcommand\conv{\rightarrow}
\newcommand\wconv{{\rightharpoonup}}
\newcommand\gammaconv{\stackrel{\Gamma}{\relbar\!\rightarrow}}
\newcommand{\per}{{\operatorname{per}}}
\newcommand{\lin}{{\operatorname{lin}}}
\renewcommand{\hom}{{\operatorname{hom}}}
\newcommand{\super}[1]{^{(#1)}}
\newcommand{\mc}{\super{\operatorname{mc}}_\hom}
\newcommand{\boundaryset}{\mathcal{A}_\gamma}
\newcommand\restterm{\operatorname{r}}
\newcommand{\energy}[1]{\mathcal{#1}}
\newcommand{\mtext}[1]{\quad\text{ #1 }\quad}
\newcommand{\step}[1]{\textit{\underline{Step #1.}}}

%%%%%%%%%%%%%%%%%%%%%%%%%%%%%%%%%%%%%%%%%%%%%%%%%%%%%%%%%%%%

\begin{document}
\title[On the Commutability of Homogenization and Linearization]{On the Commutability of Homogenization and Linearization in Finite Elasticity}
\author[S. M\"uller \& S. Neukamm]{Stefan M\"uller \& Stefan Neukamm}
\address[Stefan M\"uller]{Hausdorff Center for Mathematics,  Institute for Applied Mathematics, Universit\"at Bonn,  Endenicher Allee 60, 53115 Bonn, Germay}
\email{stefan.mueller@hcm.uni-bonn.de}
\address[Stefan Neukamm]{Zentrum Mathematik / M6, Technische Universit\"{a}t M\"{u}nchen,
Boltzmannstra\ss e 3,  85748 Garching bei M\"{u}nchen, Germany}
\curraddr{Max Planck Institute for Mathematics in the Sciences,
  Inselstra\ss e 22, 04103 Leipzig, Germany}
\email{neukamm@mis.mpg.de}

\date{\today}

\begin{abstract}
    We consider a family of non-convex integral functionals
    \begin{equation*}
      \frac{1}{h^2}\int_\Omega W(x/\varepsilon,\id+h\grad g(x))\ud x,\qquad g\in\fs W^{1,p}(\Omega;\Rrv n)
    \end{equation*}
    where $W$ is a Carath\'eodory function periodic in its first, and non-degenerate in its second variable. We prove under
    suitable conditions that the $\Gamma$-limits corresponding to linearization
    ($h\rightarrow 0$) and homogenization ($\varepsilon\rightarrow 0$)
    commute, provided $W$ is minimal at the identity and admits
      a quadratic Taylor expansion at the identity. Moreover, we show that the homogenized integrand, which is
    determined by a multi-cell homogenization formula, has a quadratic Taylor
    expansion with a quadratic term that is given by the homogenization of
    the second variation of $W$.

\vspace{10pt}
\noindent {\bf Keywords:} 
homogenization, nonlinear elasticity, linearization, $\Gamma$-convergence.

\vspace{6pt}
\noindent {\bf 2010 Mathematics Subject Classification: } 35B27, 49J45, 74E30, 74Q05, 74Q20.

\end{abstract}

\maketitle

\section{Introduction}
In this contribution we consider periodic integral functionals of the type
\begin{equation}\label{intro:0}
  \int\limits_\Omega W(x/\varepsilon,\grad u(x))\ud x,\qquad u\in\fs W^{1,p}(\Omega;\Rrv n)
\end{equation}
where $\varepsilon$ is a positive parameter, $\Omega$ an open, bounded Lipschitz domain in $\Rrv n$ with
$n\geq 2$ and $W:\,\Rrv n{\times}\Rrm n\rightarrow [0,+\infty)$ is a
Carath\'eodory function $Y:=[0,1)^n$-periodic in its first variable.

Functionals of this type model manifold situations in physics and
engineering. We are particularly interested in applications to
elasticity. In this context the integral in \eqref{intro:0} is the
elastic energy of a periodic composite material with period
$\varepsilon$ deformed by the map $u:\Omega\rightarrow\Rrv n$. In
situations where the deformation is close to a rigid deformation, say
$\abs{\grad u-\id}\sim h$, it is
natural to introduce the scaled displacement ${g(x):=h^{-1}(u(x)-x)}$
and to consider the scaled, but equivalent energy
\begin{equation*}
  \mathcal I^{\varepsilon,h}(g):=\frac{1}{h^2}\int\limits_\Omega
  W(x/\varepsilon,\id + h \grad g(x))\ud x,\qquad g\in\fs
  W^{1,p}(\Omega;\Rrv n).
\end{equation*}
For small parameters
$\varepsilon$ and $h$ we expect that the energy $\mathcal I^{\varepsilon,h}$ can be replaced by an
\textit{effective} model that is simpler than the original one, but
nevertheless captures the essential behavior from a macroscopic perspective. In this context the limit $h\rightarrow 0$
corresponds to \textit{linearization}, while the theory of \textit{homogenization}
studies the asymptotic behavior of the energy as
$\varepsilon\rightarrow 0$. Obviously, there are two orderings to derive an
effective model by consecutively passing to the limits
$\varepsilon\rightarrow0 $ and $h\rightarrow 0$. The goal of this paper is to show
that both ways lead to the same limiting energy. In other words we
prove that \textit{homogenization and linearization commute}.

It is well-known that in the case where the integrand $W$ is convex in its second variable the
\textit{homogenization} of \eqref{intro:0}  is the integral functional $\int_\Omega W_\hom\super 1(\grad u(x))\ud x$, where the homogenized integrand is given by the \textit{one-cell homogenization} formula
\begin{equation*}
  W_\hom\super 1(F):=\inf\left\{\,\int\limits_Y W(y, F+\grad\varphi(y))\ud y\,:\,\varphi\in\fs W^{1,p}_\per(Y;\Rrv n)\,\right\}.
\end{equation*}
This result goes back to P.~Marcellini \cite{Marcellini1978} and was
extensively studied with various methods (cf. e.g. \cite{DalMaso},
\cite{Allaire1992}). The more general setting of monotone operators
and non-periodic dependency on the spatial variable was studied by L.~Tartar \cite{Tartar1977,Tartar2009,TartarBook2009}. On the other side, for non-convex $W$ it turns
out that in general the relaxation of $W$ over one --- or even an ensemble of
finitely many copies of the periodicity cell $Y$ is not
sufficient for homogenization. Nevertheless, A.~Braides
\cite{Braides1985,Braides1998} and the first author \cite{Mueller1987} showed in the
1980s that in the non-convex case the \textit{homogenization} of the
functional in \eqref{intro:0}  leads to the \textit{multi-cell homogenization} formula
\begin{equation*}
  W\super{\text{mc}}_\hom(F):=\inf\limits_{k\in\Nn}\inf\left\{\,\frac{1}{k^n}\int\limits_{kY} W(y,F{+}\grad \varphi(y))\ud y\,:\,\varphi\in\fs W^{1,p}_\per(kY;\Rrv n)\,\right\}.
\end{equation*}
We state this result in a precise manner in Theorem~\ref{thm:homog}
below.  The fact that the one-cell formula is not sufficient in
nonlinear elasticity has been observed earlier in the engineering
literature (see \cite{Triantafyllidis1984,Triantafyllidis1985}). These
authors observed that instabilities arise even when the one-cell
formula predicts no loss of stability. Later in \cite{Mueller1993}
this has been systematically investigated.

In this contribution we are interested in integrands of the following
type: For $a>0$ and $p\in(1,\infty)$ let $\mathcal W(a,p)$ denote the
class of Carath\'eodory functions $W:\Rrv n{\times}\Rrm n\rightarrow
[0,\infty)$ that are $Y$-periodic in the first variable and that
satisfy the following conditions (W1) -- (W4).
We suppose that $W$ is locally Lipschitz continuous and
  satisfies standard growth conditions of order $p$:
\begin{equation}
  \tag{W1}
  \left\{\begin{aligned}
      &\tfrac{1}{a}\abs{ F}^p-a\leq W(y, F)\leq a(1+\abs{ F}^p)\mtext{and}\\
      &\abs{W(y,F)-W(y,G)}\leq a(1+\abs{F}^{p-1}+\abs{G}^{p-1})\abs{F-G}
    \end{aligned}\right.
\end{equation}
for all $F,G\in\Rrm n$ and a.e. $y\in\Rrv n$.
We suppose that $F=\id$ is a natural state and $W$ is non-degenerate, i.e.
\begin{align*}
  \tag{W2}&W(y,\id)=0\qquad\text{for a.e. }y\in\Rrv n,\\
  \tag{W3}&W(y, F)\geq \tfrac{1}{a}\dist^2( F,\SO n)\qquad\text{for all
    $F\in\Rrm n$ and a.e. $y\in\Rrv n$.}
\end{align*}
Furthermore, we suppose that $W$ admits a quadratic Taylor expansion at $\id$ in the sense that
\begin{equation*}
  \tag{W4}  \exists\,Q\in\mathcal Q\,:\,\limsup\limits_{
    G\rightarrow 0\atop G\neq 0}\esssup\limits_{y\in \Rrv n}\frac{\abs{W(y,\id+ G)-Q(y,G)}}{\abs{ G}^2}=0.
\end{equation*}
Here and below $\mathcal Q$ denotes the set of Carath\'eodory functions\linebreak
$Q:\,\Rrv n{\times}\Rrm n\rightarrow [0,+\infty)$ that are
$Y$-periodic in the first variable, non-negative and quadratic in the second variable and
bounded in the sense that
\begin{equation*}
  \forall G\in\Rrm n\,:\,\esssup_{y\in\Rrv n}Q(y,G)\leq c\abs{G}^2
\end{equation*}
for a suitable constant $c>0$.

\begin{remarks}
  \begin{enumerate}
  \item In elasticity it is natural to assume that $W$ is frame
    indifferent, i.e.
    \begin{equation*}
      W(y, R F)=W(y, F)\qquad\text{for all $R\in\SO n$,
        $F\in\Rrm n$ and a.e. $y\in\Rrv n$.}
    \end{equation*}
    We have not introduced this assumption explicitly, because it is
    not required in the proof. Nevertheless, the lower bound (W3) is
    motivated by frame indifference.
  \item (W2) \& (W3) are the main assumptions in our analysis.
      For energy densities representing composite materials, condition
      (W2) requires that each of the single components of the
      composite is stress free in a common reference configuration.
      Note that this rules out the application to prestressed
      composites, as considered in Section~\ref{sec:count-ii:-non}.
  \item The combination of (W3) \& (W4) can be interpreted as a
    generalization of Hooke's law to the geometrically nonlinear
    setting, in the sense that both conditions together imply that for
    infinitesimal small strains the stress is proportional to the
    strain.
  \end{enumerate}
\end{remarks}

For ${W\in\mathcal W(a,p)}$ we study the behavior of the homogenized
integrand $W\mc$ near the identity. Our first main result is the following: 
\begin{theorem}\label{thm:1}
  Let $W\in\mathcal W(a,p)$. Then 
  \begin{equation}\label{thm:1:1}
    \limsup\limits_{ G\rightarrow 0,\atop G\neq 0}\frac{\abs{W\mc(\id+
        G)-Q\super 1_\hom(G)}}{\abs{ G}^2}=0.
  \end{equation}
  The statement remains valid if we drop condition (W1).
\end{theorem}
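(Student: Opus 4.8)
The plan is to prove the two matching estimates in \eqref{thm:1:1} separately. Write $Q_\hom:=Q\super 1_\hom$. Since every $Q\in\mathcal Q$ is convex (nonnegative quadratic) in its second slot, its one-cell and multi-cell homogenized densities agree; in particular $0\le Q_\hom(G)\le c\abs G^2$ and
\begin{equation*}
  \tfrac1{k^n}\int_{kY}Q(y,G+\grad\varphi(y))\ud y\ \ge\ Q_\hom(G)\qquad\text{for all }k\in\Nn,\ \varphi\in\fs W^{1,2}_\per(kY;\Rrv n).
\end{equation*}
Let $\omega(t)=\sup_{0<\abs{G'}\le t}\esssup_y\frac{\abs{W(y,\id+G')-Q(y,G')}}{\abs{G'}^2}$ be the modulus in (W4), so $\omega(t)\to0$ as $t\to0$. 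For the \textbf{upper bound} I would take $k=1$ and test the multi-cell formula for $W$ with the optimal corrector $\varphi_G$ of the one-cell $Q$-problem at $G$; by linearity of the corrector equation $\abs{\grad\varphi_G}\le c'\abs G$ a.e., so $H:=G+\grad\varphi_G$ satisfies $\abs H\le(1+c')\abs G$, and (W4) yields $\int_Y W(y,\id+H)\le\int_Y Q(y,H)+\omega((1{+}c')\abs G)(1{+}c')^2\abs G^2=Q_\hom(G)+o(\abs G^2)$. This uses only (W4).

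For the \textbf{lower bound} I would fix $k$ and a competitor $\varphi$; since (W4) makes $W(\cdot,\id+G)$ bounded for small $G$ the infimum is finite, and we may assume $E:=\tfrac1{k^n}\int_{kY}W(y,\id{+}G{+}\grad\varphi)<Q_\hom(G)$. From (W3) and $\abs A^2\lesssim\dist^2(A,\SO n)+1$ one gets $\grad\varphi\in L^2(kY)$, so $\varphi\in\fs W^{1,2}_\per(kY)$ and $G+\grad\varphi$ is admissible for the $Q$-problem on $kY$. Put $u(x)=(\id{+}G)x+\varphi(x)$. By (W3), $\tfrac1{k^n}\int_{kY}\dist^2(\grad u,\SO n)\le aE\le ac\abs G^2$, and the geometric rigidity estimate on the cube $kY$ (whose constant is scale-invariant) furnishes $R\in\SO n$ with $\tfrac1{k^n}\int_{kY}\abs{\grad u-R}^2\lesssim\abs G^2$; taking averages and using periodicity of $\grad\varphi$ then gives $\abs{R-\id}\lesssim\abs G$ and $\tfrac1{k^n}\int_{kY}\abs{\grad\varphi}^2\lesssim\abs G^2$. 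The idea is now to pass, up to an error $o(\abs G^2)$, to a competitor on which $\abs{\grad u-\id}$ is uniformly small, since there (W4) turns the density into $Q(y,G+\grad\varphi)$ up to a factor $1+o(1)$; then $\tfrac1{k^n}\int_{kY}Q(y,G+\grad\varphi)\ge Q_\hom(G)$ closes the estimate after passing to the infimum over $\varphi,k$.

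Concretely, for a small threshold $\rho>0$ let $\varphi_\rho\in\fs W^{1,\infty}_\per(kY)$ be the Lipschitz truncation of $\varphi$ at level $\rho$ (via the maximal function of $\grad\varphi$): $\norm{\grad\varphi_\rho}_\infty\le C\rho$, $\varphi_\rho=\varphi$ off $\mathcal C_\rho:=\{M(\grad\varphi)>\rho\}$ with $\tfrac1{k^n}\abs{\mathcal C_\rho}\lesssim\rho^{-2}\abs G^2$, and $\tfrac1{k^n}\int_{kY}\abs{\grad\varphi_\rho}^2\lesssim\tfrac1{k^n}\int_{kY}\abs{\grad\varphi}^2\lesssim\abs G^2$. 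For $\abs G$ small one has $\abs{\grad u-\id}=\abs{G+\grad\varphi}\le2\rho$ on $kY\setminus\mathcal C_\rho$, and combining (W4) there with $W\ge0$ on $\mathcal C_\rho$ and the admissibility of $\varphi_\rho$ leads to
\begin{equation*}
  \tfrac1{k^n}\int_{kY}W(y,\id{+}G{+}\grad\varphi)\ \ge\ Q_\hom(G)\ -\ \omega(2\rho)\,C\abs G^2\ -\ \tfrac1{k^n}\int_{\mathcal C_\rho}Q(y,G+\grad\varphi_\rho).
\end{equation*}
Taking the infimum over $\varphi,k$ and then $\rho\to0$ would prove \eqref{thm:1:1} \emph{provided} the bad-set term $\tfrac1{k^n}\int_{\mathcal C_\rho}Q(y,G+\grad\varphi_\rho)$ is shown to be $o(\abs G^2)$ after this limit, uniformly in $\varphi,k$. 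I expect this to be the main obstacle: the crude bound $\lesssim\rho^2\cdot\abs{\mathcal C_\rho}/k^n\sim\abs G^2$ only loses a fixed multiple of $\abs G^2$, and (W3) alone does not improve it, because on a region where $\grad u$ sits near a \emph{far} rotation $R'$ the density $W$ degenerates while $Q(\cdot,\grad u-\id)$ does not. The resolution I would pursue is a finer cut-off: the global rigidity bound forces such a region (where $\abs{R'-R}\gtrsim1$) to have measure $\lesssim\abs G^2$, and continuity of $u$ forces around it a transition layer on which $\grad u$ stays bounded away from $\SO n$ (two distinct rotations being incompatible); bounding this layer by (W3), summing over a Vitali cover and optimizing in the layer thickness shows its cost dominates the putative gain, after which the competitor can be truncated into the regime $\abs{\grad u-\id}\lesssim\rho\to0$ with a genuinely vanishing error.

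Finally, (W1) is never used above: the upper bound uses only (W4), while in the lower bound coercivity is replaced by (W3) and the upper growth bound is avoided by discarding $\int_{\mathcal C_\rho}W\ge0$ instead of estimating $W$ on the truncation set from above. Hence the argument, and in particular \eqref{thm:1:1}, applies verbatim to any $W$ satisfying only (W2)--(W4).
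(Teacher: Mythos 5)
Your decomposition (upper bound by testing with a corrector of the $Q$-problem, lower bound by truncating a near-optimal corrector of the $W$-problem, using rigidity plus (W3) for the $L^2$ bound and discarding $W\ge 0$ on the bad set) matches the paper's strategy, and you have correctly located the one genuinely hard point: the bad-set term is a priori only $O(\abs{G}^2)$, not $o(\abs{G}^2)$, because the $L^2$ mass of $\grad\varphi$ may concentrate precisely on the set you cut away (e.g.\ where $\grad u$ sits near a rotation far from $\id$, where $W$ degenerates but $Q(\cdot,G+\grad\varphi)$ does not). However, your proposed resolution --- a finer cut-off exploiting incompatibility of distinct rotations, transition layers, a Vitali cover and an optimization in the layer thickness --- is not carried out, and as sketched it is doubtful: nothing in your argument quantifies the ``cost'' of a transition layer in a way that beats the critical scaling $\abs{G}^2$, and on general multi-cells $kY$ with $k\to\infty$ the geometry of the bad set is unconstrained. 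The idea you are missing is much softer and uses only the convexity of $Q$: expand $Q$ around the \emph{fixed} one-cell minimizer $G+\grad\psi_G$ via $Q(y,A)-Q(y,B)\ge 2\iprod{\tensor L(y)(A-B)}{B}$ with $A=\chi(G+\grad\varphi)$ and $B=G+\grad\psi_G$. The dangerous quadratic term then has a sign; the Euler--Lagrange equation for $\psi_G$ kills the term linear in $\grad\varphi-\grad\psi_G$; and the bad-set contribution becomes, after Cauchy--Schwarz, $\bigl(\tfrac1{k^n}\int\abs{G+\grad\varphi}^2\bigr)^{1/2}\bigl(\tfrac1{k^n}\int_{\text{bad}}\abs{G+\grad\psi_G}^2\bigr)^{1/2}$, whose second factor is $o(1)$ by equi-integrability of the fixed $L^1$ function $\abs{G+\grad\psi_G}^2$ (the bad set has relative measure $O(\abs{G})$ by Chebyshev, exactly as in your estimate). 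Concentration of $\grad\varphi$ is thereby rendered harmless without any geometric analysis of the bad set.

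There is also a smaller gap in your upper bound: for a general $Q\in\mathcal Q$ the coefficients $\tensor L$ are only in $L^\infty$, so the one-cell corrector $\varphi_G$ is merely in $\fs W^{1,2}_\per$ and the pointwise bound $\abs{\grad\varphi_G}\le c'\abs{G}$ a.e.\ does \emph{not} follow from linearity (linearity gives $\grad\varphi_G=\sum_{ij}G_{ij}\grad\varphi_{e_i\otimes e_j}$, but the individual correctors need not be bounded). Without a uniform bound on $\grad\varphi_G$ you cannot apply (W4) pointwise under the integral, and you would face the same concentration issue as in the lower bound. The fix is standard and is what the paper does: approximate the minimizer by a smooth periodic $\psi$ up to an error $\eta$ in energy (using strong continuity of the quadratic functional), apply (W4) with the uniformly small argument $G+\abs{G}\grad\psi$, and let $\eta\to 0$ at the end. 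Your closing claim that (W1) is nowhere needed is correct and agrees with the paper.
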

This means that whenever the integrand $W$ admits a quadratic
expansion at $\id$ with a quadratic term $Q$,
then also the homogenized integrand $W\mc$ has a quadratic expansion
where the quadratic term is given by the homogenization of $Q$.
Partial results in this direction under strong implicit assumptions on
the minimizers for the cell problems have been obtained in \cite{Mueller1993}.

\begin{remark}\label{rem:1}
Conditions (W3) \& (W4) imply that the quadratic integrand $Q(y,\cdot)$ is a positive semi-definite quadratic form for almost
  every $y\in\Rrv n$. More precisely, the non-degeneracy condition (W3) implies that $Q(y,\cdot)$ restricted to the subspace of symmetric $n{\times}n$ matrices is positive definite; in particular, we have
  \begin{equation*}
    Q(y, G)\geq \tfrac{1}{a}\abs{\sym G}^2\qquad\text{for all $G\in\Rrm n$
      and a.e. $y\in\Rrv n$.}
  \end{equation*}
  In virtue of Korn's inequality this guarantees that  the minimization problem
  \begin{equation*}
    \int\limits_Y
    Q(y, G+\grad\varphi(y))\ud y\mtext{subject
      to}\int\limits_Y\varphi\ud y=0
  \end{equation*}
  has a unique minimizer in $\fs W^{1,2}_\per(Y;\Rrv n)$. If the energy
  density  $W$ additionally satisfies (W2) and is frame indifferent, then the
associated quadratic form $Q$ vanishes for skew symmetric
matrices. In this case $Q$ and its
  homogenization $Q_\hom$ are energy densities as they typically
  appear in linear
  elasticity.
\end{remark}

Theorem~\ref{thm:1} shows that the expansion property (W4) is stable
under homogenization for energy
  densities in   $\mathcal
  W(a,p)$. The next result states that also the non-degeneracy condition is
stable under homogenization:
\begin{lemma}\label{lem:2}
  Let ${W\in\mathcal W(a,p)}$, then $W\mc\in\mathcal W(a^\prime,p)$ for
  a positive constant  $a^\prime=a^\prime(a,n)$. Additionally, the map $W\mc:\Rrm n\rightarrow
  [0,\infty)$ is continuous and  quasiconvex.
\end{lemma}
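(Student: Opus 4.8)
The plan is to verify conditions (W1)--(W4) for $W\mc$ one at a time, together with the structural claims (quasiconvexity and continuity), drawing on the multi-cell homogenization result (Theorem~\ref{thm:homog}) and on Theorem~\ref{thm:1}. First I would establish (W1) for $W\mc$. The upper bound $W\mc(F)\le a(1+\abs F^p)$ is immediate by testing the multi-cell formula with $\varphi=0$ on the single cell $Y$. For the lower bound one uses that $W\mc$ is the homogenized integrand, hence quasiconvex, and combines this with the fact that $F\mapsto\frac1a\abs F^p-a$ is a (convex, hence quasiconvex) lower bound for every $W(y,\cdot)$, so that averaging and taking the infimum over $k$ and over periodic test fields cannot destroy it; concretely, for any $\varphi\in W^{1,p}_\per(kY;\Rrv n)$ one has $\frac{1}{k^n}\int_{kY}W(y,F+\grad\varphi)\ud y\ge \frac1a\frac{1}{k^n}\int_{kY}\abs{F+\grad\varphi}^p\ud y - a\ge \frac1a\abs{F}^p-a$ by Jensen's inequality applied to $\abs{\cdot}^p$ and the fact that the average of $F+\grad\varphi$ over $kY$ is $F$. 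The local Lipschitz estimate in (W1) for $W\mc$ follows from the corresponding estimate for $W$: given $F,G$, take a near-optimal test field $\varphi$ for $W\mc(G)$ on $kY$ and use it as a competitor for $W\mc(F)$, so that $W\mc(F)-W\mc(G)\le \frac{1}{k^n}\int_{kY}\bigl(W(y,F+\grad\varphi)-W(y,G+\grad\varphi)\bigr)\ud y$; the pointwise Lipschitz bound then gives a factor $a(1+\abs{F+\grad\varphi}^{p-1}+\abs{G+\grad\varphi}^{p-1})\abs{F-G}$, and after applying Hölder's inequality and the already-established $p$-growth of the average of $\abs{\grad\varphi}^p$ (controlled by $W\mc(G)\le a(1+\abs G^p)$), one obtains the desired estimate with a new constant depending only on $a$ and $n$.

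Next I would handle (W2), (W3). Condition (W2), $W\mc(\id)=0$, is immediate: $W(y,\id)=0$ for a.e.\ $y$ by (W2) for $W$, so testing with $\varphi=0$ gives $W\mc(\id)\le 0$, and $W\mc\ge 0$. For (W3), I would invoke the rigidity estimate: by (W3) for $W$, $\frac{1}{k^n}\int_{kY}W(y,\id+H+\grad\varphi)\ud y\ge \frac{1}{ak^n}\int_{kY}\dist^2(\id+H+\grad\varphi,\SO n)\ud y$, and the geometric rigidity inequality of Friesecke--James--Müller (applicable on the cube $kY$ with a constant independent of $k$ by scaling) bounds the right-hand side below by $\frac{c(n)}{ak^n}\int_{kY}\abs{\id+H+\grad\varphi-R}^2\ud y$ for a suitable constant rotation $R$; since the average of $\id+H+\grad\varphi$ over $kY$ equals $\id+H$, this is at least $\frac{c(n)}{a}\dist^2(\id+H,\SO n)$ after optimizing over $R$ and using that the variance term is nonnegative. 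Hence $W\mc(\id+H)\ge \frac{1}{a'}\dist^2(\id+H,\SO n)$ with $a'=a'(a,n)$.

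Finally, (W4) for $W\mc$ is exactly the content of Theorem~\ref{thm:1}: it provides a quadratic form $Q\super1_\hom\in\mathcal Q$ such that $W\mc(\id+G)-Q\super1_\hom(G)=o(\abs G^2)$ uniformly; the boundedness of $Q\super1_\hom$ in the sense required by $\mathcal Q$ follows from the upper bound in (W1) for $W$ via the one-cell homogenization of $Q$ (as in Remark~\ref{rem:1}). Quasiconvexity and continuity of $W\mc$ come from the standard theory of multi-cell homogenization (Theorem~\ref{thm:homog}): the $\Gamma$-limit integrand of a family of integral functionals with $p$-growth is automatically quasiconvex, and a quasiconvex function satisfying the $p$-growth bound (W1) is locally Lipschitz, hence continuous. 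I expect the main obstacle to be the lower bound (W3): one must apply geometric rigidity on the growing cubes $kY$ and check that the constant does not deteriorate as $k\to\infty$ — this is true by the scaling invariance of the rigidity estimate, but it must be argued carefully, in particular handling the case $k=1$ separately from the periodic-rescaling argument and making sure the comparison rotation $R$ is chosen uniformly.
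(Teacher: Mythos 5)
Your proposal is correct, and its overall architecture coincides with the paper's: (W1), continuity and quasiconvexity are quoted from the standard theory of multi-cell homogenization, (W2) is trivial by testing with $\varphi=0$, (W4) is exactly the content of Theorem~\ref{thm:1}, and the only genuine work is the stability of the non-degeneracy condition (W3). For that step you take a genuinely different route. The paper (Lemma~\ref{lem:4}) estimates $\frac{1}{m_k^n}\int_{m_kY}\dist^2(F+\grad\psi_k,\SO n)\ud y$ from below by replacing $\dist^2(\cdot,\SO n)$ with its quasiconvexification, using quasiconvexity on the cube $m_kY$ with periodic test fields to discard the gradient, and then invoking Zhang's theorem that $\mathrm Q\!\dist^2(\cdot,\SO n)\geq c_n\dist^2(\cdot,\SO n)$. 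You instead apply the Friesecke--James--M\"uller rigidity estimate on $kY$ (whose constant is scale-invariant, hence uniform in $k$) to produce a rotation $R$, and then use periodicity and Jensen to conclude $\abs{\id+H-R}^2\geq\dist^2(\id+H,\SO n)$; this is in effect the paper's Lemma~\ref{lem:1} put to a different use, trading the ``soft'' quasiconvexity input (Zhang) for its quantitative counterpart (geometric rigidity). Your argument goes through cleanly: the final bound holds for \emph{whatever} rotation the theorem produces, so your concerns about choosing $R$ ``uniformly'' and about treating $k=1$ separately are unfounded. Two minor points worth making explicit: for $p<2$ the near-minimizers are a priori only in $\fs W^{1,p}_\per(kY;\Rrv n)$ while Theorem~\ref{thm:rigidity} is stated for $W^{1,2}$ maps, so one should note that finiteness of the energy together with (W3) forces $\grad\varphi\in L^2(kY;\Rrm n)$ (otherwise the desired inequality is trivial); and when you invoke Theorem~\ref{thm:1} for (W4) it is worth observing, as the paper implicitly does, that the proof of Theorem~\ref{thm:1} nowhere uses Lemma~\ref{lem:2}, so there is no circularity.
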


In the language of $\Gamma$-convergence Theorem~\ref{thm:1} implies that
the $\Gamma$-limits of $(\mathcal I^{\varepsilon,h})$ corresponding to
linearization and homogenization commute. In order to state this in a
precise manner, let $\gamma$ be a closed subset of
$\partial\Omega$ with positive  {$n{-}1$-dimensional} Hausdorff-measure.
We denote the space of scaled displacements that satisfy the  Dirichlet
boundary condition on $\gamma$ by
\begin{equation*}
  \boundaryset:=\Big\{\,g\in\fs W^{1,2}(\Omega;\Rrv n)\,:\,g=0\text{ on }\gamma\,\Big\}.
\end{equation*}
For simplicity we assume that $\gamma$ is regular enough to guarantee
that  $\boundaryset\cap\fs W^{1,\infty}(\Omega;\Rrv n)$ is strongly
dense in $\boundaryset$. We consider the following functionals from $\fs
L^2(\Omega;\Rrv n)$ to $[0,+\infty]$:
\begin{align*}
  \energy I^{h,\varepsilon}(g)&:=\left\{\begin{aligned}
      &\frac{1}{h^2}\int\limits_\Omega W(x/\varepsilon,\id+h\grad g(x))\ud x &\qquad&\text{if }g\in\boundaryset\\
      &+\infty &&\text{else,}
    \end{aligned}\right.\\
  \energy I^{h}_\hom(g)&:=\left\{\begin{aligned}
      &\frac{1}{h^2}\int\limits_\Omega W\mc(\id+h\grad g(x))\ud x &\qquad&\text{if }g\in\boundaryset\\
      &+\infty &&\text{else,}
    \end{aligned}\right.\\
  \energy I^{\varepsilon}_\lin(g)&:=\left\{\begin{aligned}
      &\int\limits_\Omega Q(x/\varepsilon,\grad g(x))\ud x &\qquad&\text{if }g\in\boundaryset\\
      &+\infty &&\text{else,}
    \end{aligned}\right.\\
  \energy I^0(g)&:=\left\{\begin{aligned}
      &\int\limits_\Omega Q\super 1_\hom(\grad g(x))\ud x&\qquad&\text{if }g\in\boundaryset\\
      &+\infty &&\text{else.}
    \end{aligned}\right.
\end{align*}
Our second main result is the following:
\begin{theorem}\label{thm:4}
  Let $W\in\mathcal W(a,p)$ with $p\geq 2$. Then the following diagram commutes
  \begin{equation}\label{diagram}
    \begin{CD}
      \energy I^{\varepsilon,h}@>{(1)}>>\energy I^{\varepsilon}_\lin\\
      @V{(2)}VV  @VV{(3)}V\\
      \energy I^{h}_\hom@>>{(4)}>\energy I^{0}
    \end{CD}
  \end{equation}
  where $(1), (4)$ and $(2),(3)$ mean $\Gamma$-convergence with respect to strong convergence in $\fs L^2(\Omega;\Rrv n)$ as $h\rightarrow 0$ and $\varepsilon\rightarrow 0$, respectively.
\end{theorem}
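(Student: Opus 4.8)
The plan is to establish the four $\Gamma$-convergences in the diagram and then invoke the uniqueness of $\Gamma$-limits. The arrows $(1)$ and $(3)$ are essentially classical linearization results: arrow $(1)$ is the $\Gamma$-convergence of $\mathcal I^{\varepsilon,h}$ to $\mathcal I^\varepsilon_\lin$ as $h\to 0$ for \emph{fixed} $\varepsilon$, and arrow $(3)$ is its homogenized counterpart, the $\Gamma$-convergence of $\mathcal I^h_\hom$ to $\mathcal I^0$ as $h\to 0$. Both are instances of the same statement — linearization of a single non-convex energy density satisfying (W2), (W3) and a quadratic expansion — applied once to $W(\tfrac{x}{\varepsilon},\cdot)$ (which lies in $\mathcal W(a,p)$, with quadratic term $Q(\tfrac x\varepsilon,\cdot)$) and once to $W\mc$ (which lies in $\mathcal W(a',p)$ by Lemma~\ref{lem:2}, with quadratic term $Q\super 1_\hom$ by Theorem~\ref{thm:1}). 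So I would isolate a linearization lemma: if $V\in\mathcal W(a,p)$ has quadratic expansion $Q_V$ at $\id$, then $\tfrac1{h^2}\int_\Omega V(\id+h\grad g)\to\int_\Omega Q_V(\grad g)$ in the sense of $\Gamma$-convergence over $\boundaryset$ with the strong $L^2$ topology. The $\liminf$ inequality uses (W3) together with the geometric rigidity estimate to get compactness in $W^{1,2}$ and to pass to the limit in the quadratic lower bound; the $\limsup$ inequality uses the density of $\boundaryset\cap W^{1,\infty}$ in $\boundaryset$, for which the recovery sequence is simply $g$ itself (the Taylor expansion (W4) controls the error uniformly because bounded gradients make $h\grad g$ small).

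Arrow $(2)$ is homogenization at \emph{fixed} $h$: $\mathcal I^{\varepsilon,h}\gammaconv\mathcal I^h_\hom$ as $\varepsilon\to 0$. This is exactly the multi-cell homogenization theorem for the non-convex density $W$ alluded to in the excerpt (Theorem~\ref{thm:homog}), applied to the rescaled displacement, so I would simply cite it — the scaling $\tfrac1{h^2}$ and the change of variables $u=x+hg$ are harmless and the Dirichlet boundary condition on $\gamma$ is accommodated by the assumed density of Lipschitz displacements. Arrow $(4)$ is linear homogenization, $\mathcal I^\varepsilon_\lin\gammaconv\mathcal I^0$ as $\varepsilon\to 0$: since $Q$ is convex (indeed quadratic) and coercive on symmetrized gradients by Remark~\ref{rem:1}, this is the classical periodic homogenization of quadratic forms à la Marcellini; note that here the multi-cell and one-cell formulas coincide precisely because $Q$ is convex, which is why the limit is $Q\super 1_\hom$.

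Once the four arrows are in place, the argument concludes abstractly. Composing $(1)$ then $(4)$ and composing $(2)$ then $(3)$ both produce, in the limit $\varepsilon\to 0$, $h\to 0$, a $\Gamma$-limit of the doubly-indexed family; but $\Gamma$-limits along both orders of iterated limits agree here because each inner limit is a genuine $\Gamma$-limit and the outer convergence is again $\Gamma$-convergence, and the candidate limit $\mathcal I^0$ is the same object reached along either path. Strictly, I would phrase this as: $\Gamma$-$\lim_{h\to0}\Gamma$-$\lim_{\varepsilon\to0}\mathcal I^{\varepsilon,h}=\Gamma$-$\lim_{h\to0}\mathcal I^h_\hom=\mathcal I^0$ and $\Gamma$-$\lim_{\varepsilon\to0}\Gamma$-$\lim_{h\to0}\mathcal I^{\varepsilon,h}=\Gamma$-$\lim_{\varepsilon\to0}\mathcal I^\varepsilon_\lin=\mathcal I^0$, so the two compositions coincide; uniqueness of $\Gamma$-limits then gives commutativity of the diagram.

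The main obstacle is arrow $(3)$ — or rather, the linearization lemma behind $(1)$ and $(3)$ — in the form needed here. For arrow $(1)$ the density is $W(\tfrac x\varepsilon,\cdot)$, which inherits (W1)--(W4) from $W$ with the \emph{same} constants and, crucially, the expansion (W4) is uniform in $y$, hence uniform in $x$; so linearization goes through with no $\varepsilon$-dependence in the error. For arrow $(3)$ the density is $W\mc$, and one must know not merely that $W\mc$ has \emph{a} quadratic expansion (that is Theorem~\ref{thm:1}) but that this expansion is good enough — e.g.\ that $W\mc$ is still non-degenerate in the sense of (W3), which Lemma~\ref{lem:2} supplies, and that the recovery-sequence construction still works; since $W\mc$ is merely quasiconvex and a priori only continuous, the $\limsup$ inequality needs the $W^{1,\infty}$-density of $\boundaryset$ and the single-density linearization, not a homogenization-type construction. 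The compactness direction is robust: (W3) for $W$ (equivalently for $W\mc$) plus rigidity controls $\grad g$ in $L^2$ along sequences of bounded energy, independently of $\varepsilon$ and $h$, which is what legitimizes treating the strong $L^2$ topology uniformly across the whole diagram.
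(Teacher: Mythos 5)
Your plan is correct and follows the paper's proof essentially verbatim: the paper establishes exactly these four arrows via its linearization theorem (Theorem~\ref{thm:2}, fed by the equi-coercivity of Proposition~\ref{prop:2} and, for the homogenized density, by Theorem~\ref{thm:1} and Lemma~\ref{lem:2}), via the Braides--M\"uller multi-cell homogenization theorem with a gluing argument for the Dirichlet condition on $\gamma$, and via classical convex homogenization, and then concludes by observing that both compositions end at $\energy I^0$. The only slip is notational: your labels for the two arrows into $\energy I^0$ are swapped relative to the diagram, since in the paper $(4)$ is the linearization $\energy I^h_\hom\gammaconv\energy I^0$ as $h\to 0$ and $(3)$ is the homogenization $\energy I^\varepsilon_\lin\gammaconv\energy I^0$ as $\varepsilon\to 0$.
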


In the diagram the $\Gamma$-limit (1), which corresponds to
linearization of a heterogeneous energy, was treated by G.~Dal~Maso,
M.~Negri and D.~Percivale in \cite{DalMaso2002}. In
Section~\ref{sec:lin} we give a slight variant of their argument,
which is adapted to assumption (W4).

\begin{remark}
On the level of the energy functional $\energy I^{\varepsilon,h}$,  it is natural to study
  also the $\Gamma$-convergence behavior as $\varepsilon$ and $h$ simultaneously
  converge to $0$. This corresponds to a ``diagonal limit'' in diagram \eqref{diagram}. Indeed, in \cite{NeukammPhD} the second author
  proves (based on two-scale convergence methods) that $\energy
  I^{\varepsilon,h}$ $\Gamma$-converges to $\energy I^0$ as
  $(\varepsilon,h)\rightarrow (0,0)$ simultaneously.
  Theorem~\ref{thm:4} is also related to recent works by A.~Braides and B.~Schmidt \cite{Braides2007,Schmidt2009} where the
passage from pair-interaction atomistic models to linear elasticity is
studied. In contrast to the setting considered in the present paper,
where $\varepsilon$ describes the length scale of the material
heterogeneity, in \cite{Braides2007,Schmidt2009} the small scale $\varepsilon$ originates from the
discrete nature of the atomistic model and measures the
interatomic distance. In \cite{Schmidt2009} the passage from-discrete-to-continuous is
obtained (in the regime $\varepsilon\ll h^2$) as a ``diagonal limit'', i.e. as a $\Gamma$-convergence result as $\varepsilon$ and $h$
simultaneously converge to $0$. It is observed that the derived $\Gamma$-limit coincides
with the model obtained by linearizing the continuum model derived
from the atomistic energy by applying the Cauchy-Born rule. The central
assumption is a discrete version of the non-degeneracy condition (W3).
In the atomistic setting this ensures that for sufficiently small displacements no
oscillations on the length scale of the lattice emerge.
\end{remark}

The $\Gamma$-convergence result is complemented by the
following equi-coercivity statement:
\begin{proposition}\label{prop:2}
  Suppose that ${W\in\mathcal W(a,p)}$ with $p\geq 2$.
  Then there exists a positive constant $c_1$ such that
  \begin{equation*}
    \min\Big\{\,\mathcal I^{\varepsilon,h}(g),\,\mathcal
    I^{\varepsilon}_\lin(g),\,\mathcal I^{h}_\hom(g),\,\mathcal I^0(g)\,\Big\}\geq c_1\Psi(g).
  \end{equation*}
  for all $\varepsilon,h>0$ and $g\in\fs L^2(\Omega;\Rrv n)$, where
  \begin{equation*}
    \Psi(g):=\left\{
      \begin{aligned}
        &\norm{g}^2_{\fs W^{1,2}(\Omega;\Rrv n)}&&\text{if
        }g\in\mathcal A_\gamma\\
        &+\infty&&\text{else.}
      \end{aligned}\right.
  \end{equation*}
\end{proposition}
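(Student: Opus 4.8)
The strategy is to establish the bound separately for each of the four functionals, and to observe that since all four carry the same Dirichlet constraint $g\in\boundaryset$, it suffices to produce for each of them a lower bound of the form $c_1\norm{g}^2_{W^{1,2}}$ when $g\in\boundaryset$ (and the bound is trivial otherwise, all functionals being $+\infty$). The linearized functionals are the easy cases: by Remark~\ref{rem:1} the quadratic form $Q(y,\cdot)$ satisfies $Q(y,G)\ge\frac1a\abs{\sym G}^2$, so $\energy I^\varepsilon_\lin(g)\ge\frac1a\int_\Omega\abs{\sym\grad g}^2$, and the homogenized form $Q\super1_\hom$ inherits the same lower bound in the symmetrized gradient (this follows by testing the one-cell formula with $\varphi=0$ together with the divergence-free structure, or simply from the general fact that homogenization preserves such coercivity bounds — alternatively one can quote that $W\mc\in\mathcal W(a',p)$ from Lemma~\ref{lem:2} and apply Theorem~\ref{thm:1} to transfer coercivity to $Q\super1_\hom$). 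In both cases Korn's inequality on $\Omega$ for functions vanishing on $\gamma$ (which has positive $\mathcal H^{n-1}$-measure, so the Korn–Poincaré inequality $\norm{g}_{W^{1,2}}\le C_K\norm{\sym\grad g}_{L^2}$ holds) upgrades this to $c_1\norm{g}^2_{W^{1,2}}$.

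For the nonlinear functionals $\energy I^{h,\varepsilon}$ and $\energy I^h_\hom$ the key is the geometric rigidity-type lower bound encoded in (W3): since $W(y,F)\ge\frac1a\dist^2(F,\SO n)$ pointwise, and $W\mc$ satisfies the analogous bound by Lemma~\ref{lem:2} (it lies in $\mathcal W(a',p)$, hence obeys a (W3)-type inequality with constant $a'$), we get for $g\in\boundaryset$
\begin{equation*}
  \energy I^{h,\varepsilon}(g)\ge\frac{1}{a h^2}\int_\Omega\dist^2(\id+h\grad g,\SO n)\ud x,
\end{equation*}
and likewise for $\energy I^h_\hom$ with $a$ replaced by $a'$. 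The plan is then to invoke the quantitative geometric rigidity estimate of Friesecke–James–Müller: there is a rotation $\bar R\in\SO n$ (depending on $h,\varepsilon,g$) with $\int_\Omega\abs{\id+h\grad g-\bar R}^2\le C\int_\Omega\dist^2(\id+h\grad g,\SO n)$, where $C=C(\Omega)$. Writing $\id+h\grad g-\bar R=(\id-\bar R)+h\grad g$ and using $h\grad g=(\id+h\grad g-\bar R)-(\id-\bar R)$, one bounds $h^2\int_\Omega\abs{\grad g}^2\le 2\int_\Omega\abs{\id+h\grad g-\bar R}^2+2\abs{\Omega}\,h^2\abs{(\bar R-\id)/h}^2$; the remaining task is to control the size of $\bar R-\id$. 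Here the Dirichlet condition $g=0$ on $\gamma$ enters: on the one hand $\abs{\bar R-\id}$ is comparable (for $\bar R$ close to $\id$) to $\abs{\asym(\bar R-\id)}$ plus a quadratic correction, and on the other the trace of $h\grad g-(\bar R-\id)$ on $\gamma$ equals $-(\bar R-\id)$ on $\gamma$, so a trace estimate gives $\abs{\bar R-\id}\le C_\gamma\big(\int_\Omega\abs{\id+h\grad g-\bar R}^2\big)^{1/2}$ provided $\mathcal H^{n-1}(\gamma)>0$. Combining these inequalities and dividing by $h^2$ yields $\int_\Omega\abs{\grad g}^2\le C\,\energy I^{h,\varepsilon}(g)$, after which Poincaré's inequality (again using $g=0$ on $\gamma$) absorbs the $L^2$-norm of $g$ itself, giving $\energy I^{h,\varepsilon}(g)\ge c_1\norm{g}^2_{W^{1,2}}$ uniformly in $h,\varepsilon$.

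The main obstacle is the last step of the rigidity argument: obtaining a bound on $\abs{\bar R-\id}$ that is \emph{uniform in $h$} after dividing the rigidity inequality by $h^2$. The subtlety is that the single rotation $\bar R$ from geometric rigidity need not be close to $\id$ a priori; one must argue that if $\bar R$ were bounded away from $\id$ then the Dirichlet datum on $\gamma$ would force the right-hand side of the rigidity estimate to be of order $1$, not $o(1)$, which combined with the scaling by $h^{-2}$ still produces a coercive bound — i.e. one splits into the case where $\int_\Omega\dist^2(\id+h\grad g,\SO n)$ is small (so $\bar R$ is genuinely near $\id$ and the linearization $\abs{\bar R-\id}\sim\abs{\asym(\bar R-\id)}$ is legitimate) and the case where it is not (where the raw bound $\energy I^{h,\varepsilon}(g)\ge\frac{c}{h^2}$ already dominates any fixed multiple of $\norm{g}^2_{W^{1,2}}$ once one also controls $\norm{\grad g}_{L^2}$ via $h\grad g=(\id+h\grad g-\bar R)+(\bar R-\id)$ with $\abs{\bar R-\id}$ bounded by the trace term). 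Making this dichotomy quantitative and uniform in $(\varepsilon,h)$, using only $\mathcal H^{n-1}(\gamma)>0$, is the technical heart of the proof; everything else is a direct application of (W3), Lemma~\ref{lem:2}, Remark~\ref{rem:1}, geometric rigidity, and Korn–Poincaré.
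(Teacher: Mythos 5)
Your overall strategy coincides with the paper's: reduce everything to (W3)-type pointwise lower bounds (using Lemma~\ref{lem:4} for $W\mc$ and Remark~\ref{rem:1} for $Q$, $Q\super1_\hom$), then use geometric rigidity respectively Korn together with the Dirichlet condition on $\gamma$ to eliminate the free rotation respectively the free skew matrix. The gap sits exactly where you flag ``the technical heart'': the bound $\abs{\bar R-\id}\le C_\gamma\bigl(\int_\Omega\abs{\id+h\grad g-\bar R}^2\bigr)^{1/2}$ is not a consequence of a trace estimate alone. The trace theorem and Poincar\'e--Wirtinger give the inequality in the \emph{other} direction, namely $\min_b\int_\gamma\abs{(\id-\bar R)x-b}^2\,\mathrm d\mathcal H^{n-1}\le C\int_\Omega\abs{\grad u}^2$ for $u(x)=(\id-\bar R)x+hg(x)-u_\Omega$ (note it is this \emph{function}, not ``$h\grad g-(\bar R-\id)$'', whose trace on $\gamma$ is used); what is still needed is that the boundary seminorm $\abs{F}_\gamma^2=\min_b\int_\gamma\abs{Fx-b}^2$ controls $\abs{F}^2$ from below. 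For a general matrix this is false ($\abs{\cdot}_\gamma$ is only a seminorm), and the entire content of Lemma~\ref{lem:5} (from Dal Maso--Negri--Percivale) is that $\abs{F}^2\le C\abs{F}_\gamma^2$ does hold for every $F$ in the cone generated by $\id-\SO n$ and for every skew matrix. Once that lemma is invoked, the estimate holds for \emph{every} rotation $\bar R$ with no smallness assumption, so the dichotomy you propose (small versus large $\int_\Omega\dist^2(\id+h\grad g,\SO n)$, plus the linearization $\abs{\bar R-\id}\sim\abs{\asym(\bar R-\id)}$) is unnecessary: the paper's proof is a single unconditional chain of inequalities. The same lemma, applied to skew matrices, is what the paper uses to establish the Korn--Poincar\'e inequality \eqref{eq:20} that you simply quote for the linearized functionals.

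A smaller point: your justification of $Q\super 1_\hom(F)\ge c\abs{\sym F}^2$ by ``testing the one-cell formula with $\varphi=0$'' is backwards --- that produces an upper bound for the infimum. The correct argument is $\int_YQ(y,F+\grad\varphi)\ud y\ge\frac1a\int_Y\abs{\sym(F+\grad\varphi)}^2\ud y\ge\frac1a\abs{\sym F}^2$, the last step by Jensen's inequality using $\int_Y\grad\varphi\ud y=0$ for periodic $\varphi$.
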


\begin{remark}
  Obviously, the map $\Psi$ (and the restriction
  $\Psi\big\vert_{\boundaryset}$) is coercive and lower semicontinuous
  in $\fs L^2(\Omega;\Rrv n)$ (and coercive and lower
  semicontinuous with respect to weak convergence in
  $\fs W^{1,2}(\Omega;\Rrv n)$ respectively); thus, the previous
  proposition implies that the energies $\mathcal I^{\varepsilon,h}$,
  $\mathcal I^{\varepsilon}_\lin$, $\mathcal I^{h}_\hom$ and $\mathcal
  I^0$ are equi-coercive in the strong topology of $\fs
  L^2(\Omega;\Rrv n)$.
\end{remark}

\begin{remark}
  The local $p$-Lipschitz condition in (W1) can be dropped (see M.~Ba\'ia and I.~Fonseca \cite{Fonseca2007}). Nevertheless, it is a  natural property of quasiconvex or
  even rank-one convex functions: Let $p\in[1,\infty)$ and $f:\,\Rrm
  n\rightarrow [0,\infty)$ with   {$f( F)\leq C(1+\abs{ F}^p)$}. If
  $f$ is rank-one convex, then we have
  \begin{equation*}
    \abs{f( F)-f( G)}\leq C(1+\abs{ F}^{p-1}+\abs{ G}^{p-1})\abs{ F- G}.
  \end{equation*}
  We would like to remark that the $p$-growth and coercivity condition is too
  restrictive for a direct application to finite elasticity, because
  we expect the behavior $W(y, F)\rightarrow +\infty$ as $
  F\rightarrow 0$ for realistic materials. In this direction, in
  \cite{NeukammPhD} the second author considers periodic integrands
  $W$ that only satisfy (W2) -- (W4). It turns out
  that Theorem~\ref{thm:4} remains valid, when $\mathcal
  I^h_\hom$ is replaced by the lower $\Gamma$-limit of $(\energy
  I^{\varepsilon,h})_\varepsilon$ as $\varepsilon\rightarrow 0$.
\end{remark}

The plan of the paper is as follows. In
Section~\ref{sec:stab-non-degen} we prove Lemma~\ref{lem:2} and show
that the non-degeneracy condition (W3) is stable under homogenization.
In Section~\ref{sec:equi-coerc-based} we prove
Proposition~\ref{prop:2}. In particular, we see that the
non-degeneracy condition combined with the Dirichlet boundary
condition imposed on $\gamma$ yield equi-coercivity of the energies.
The proof relies on an approach by G.~Dal Maso et al. in
\cite{DalMaso2002} and combines the geometric rigidity
estimate (see \cite{Fries2002}) with an estimate on $\gamma$ that
allows to eliminate free rotations (see 
\cite{DalMaso2002} and Lemma~\ref{lem:5} below). Moreover, we prove a simple rigidity estimate for periodic
variations (see Lemma~\ref{lem:1}).

Section~\ref{sec:exp} is devoted to the proof of Theorem~\ref{thm:1}
and in Section~\ref{sec:lin} we present a linearization theorem that
slightly extends results in \cite{DalMaso2002} and is tailor-made for condition (W4). 
In Section~\ref{sec:hlc} we discuss the diagram
\eqref{diagram} and prove Theorem~\ref{thm:4}. Finally, in
Section~\ref{sec:non-comm-expans-3} we present two examples for which
homogenization and linearization do not commute.

\subsubsection*{Notation.}
We denote the space of real $n{\times}n$-matrices by $\Rrm n$ and the subset of rotations, i.e. $F\in\Rrm n$ with $F\transpose F=\id$
and $\det F=1$, by $\SO n$. For matrices $ F,  G\in\Rrm n$ we define the inner product and induced norm by
\begin{equation*}
  \iprod{ F}{ G}:=\trace F\transpose G\mtext{and}\abs{ F}:=\sqrt{\iprod{ F}{ F}}\qquad\text{respectively.}
\end{equation*}
Let $k\in\Nn$. A measurable function $u:\,\Rrv n\rightarrow\Rr$ is called 
$kY$-periodic, if it satisfies  $u(y+z)=u(y)$ for almost every $y\in\Rrv
n$ and all $z\in k\Zz^n$. We define the function spaces
\begin{align*}
  \fs L^p_\per(kY)&:=\left\{\,u\in\fs L^p_{\operatorname{loc}}(\Rrv n)\,:\,u\text{ is
      $kY$-periodic}\,\right\},\\
  \fs W^{1,p}_\per(kY)&:=\left\{\,u\in\fs L^p_\per(kY)\,:\,u\in\fs
    W^{1,p}_{\operatorname{loc}}(\Rrv n)\,\right\}
\end{align*}
and likewise  $\fs L^p_\per(kY;\Rrv n)$ and $\fs W^{1,p}_\per(kY;\Rrv
n)$.

We denote the set of linear maps $\tensor L$ from $\Rrm n$ to $\Rrm n$
that satisfy  $\dprod{\tensor LA}{B}=\dprod{\tensor LB}{A}$ for all
$A,B\in\Rrm n$ by $\tsym(n)$, i.e. $\tensor L\in\tsym(n)$ is a
symmetric fourth order tensor. We associate to each quadratic
integrand $Q\in\mathcal Q$ a
unique map $\tensor L\in\fs L^\infty_\per(Y;\tsym(n))$ according to
\begin{equation*}
  \iprod{\tensor
    L(y)F}{G}=\frac{Q(y,F+G)-Q(y,F)-Q(y,G)}{2}\qquad\text{for all
  }F,G\in\Rrm n.
\end{equation*}

\section{Stability of the non-degeneracy condition. Proof of
  Lemma~\ref{lem:2}.}\label{sec:stab-non-degen}
In this section we prove Lemma~\ref{lem:2}. Let $W\in\mathcal W(a,p)$.
It is well-known that the homogenized integrand  $W\mc:\Rrm n\rightarrow[0,\infty)$ is a
continuous and quasiconvex map that satisfies the
$p$-growth and $p$-coercivity condition (W1) (see e.g. \cite{Mueller1987}, \cite{Braides1998} and \cite{Fonseca2007}). Condition (W2) is trivially fulfilled
and Theorem \ref{thm:1} implies that
$W\mc$ has a Taylor expansion at $\id$, and therefore satisfies condition (W4).
Thus, it remains to prove that the non-degeneracy condition (W3)
is stable under homogenization. This is an immediate consequence of
the following observation:

\begin{lemma}\label{lem:4}
  Let $W:\,\Rrv n{\times}\Rrm n\rightarrow [0,+\infty]$ be a
  Carath\'eodory function $Y$-periodic in its first variable and
  suppose that $W$ satisfies the non-degeneracy condition (W3). Then
  for all $F\in\Rrm n$ we have
  \begin{equation*}
    W\mc(F)\geq \frac{1}{a^\prime}\dist^2(F,\SO n)
  \end{equation*}
  where the positive constant $a^\prime$ only depends on the dimension
  $n$ and the constant from condition (W3).
\end{lemma}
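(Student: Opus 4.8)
The plan is to prove the lower bound $W\mc(F)\geq \tfrac{1}{a'}\dist^2(F,\SO n)$ directly from the multi-cell formula, by showing that \emph{every} competitor $F+\grad\varphi$ with $\varphi\in W^{1,p}_\per(kY;\Rrv n)$ already has the property that $\frac{1}{k^n}\int_{kY}\dist^2(y',\SO n)\ud y'$ evaluated at $y'=F+\grad\varphi(y)$ bounds $\dist^2(F,\SO n)$ from below, up to a dimensional constant. Since (W3) gives $W(y,F+\grad\varphi(y))\geq\tfrac1a\dist^2(F+\grad\varphi(y),\SO n)$ pointwise, it suffices to establish the \emph{periodic rigidity estimate}
\begin{equation}\label{plan:rig}
  \frac{1}{k^n}\int\limits_{kY}\dist^2(F+\grad\varphi(y),\SO n)\ud y\;\geq\;\frac{1}{c(n)}\,\dist^2(F,\SO n)
\end{equation}
for all $k\in\Nn$, all $F\in\Rrm n$ and all $\varphi\in W^{1,2}_\per(kY;\Rrv n)$; taking the infimum over $k$ and $\varphi$ then yields the claim with $a'=a\,c(n)$. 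This is exactly the ``simple rigidity estimate for periodic variations'' advertised as Lemma~\ref{lem:1} in the introduction, so I will either invoke it or reprove it here.

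To prove \eqref{plan:rig}, first I would use the geometric rigidity estimate of Friesecke--James--M\"uller on the cube $kY$: there is a constant rotation $R\in\SO n$ (depending on $\varphi$, $F$, $k$) with
\begin{equation*}
  \int\limits_{kY}\abs{F+\grad\varphi(y)-R}^2\ud y\;\leq\;c(n)\int\limits_{kY}\dist^2(F+\grad\varphi(y),\SO n)\ud y,
\end{equation*}
where $c(n)$ is scale-invariant (rigidity constants are invariant under dilation of the domain). Next, because $\varphi$ is $kY$-periodic, $\int_{kY}\grad\varphi\ud y=0$, so the left-hand side controls $|kY|\,\abs{F-R}^2 = k^n\abs{F-R}^2$ via the elementary inequality $\int_{kY}\abs{(F-R)+\grad\varphi}^2 \geq k^n\abs{F-R}^2$ (expand the square; the cross term vanishes by periodicity). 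Finally $\dist^2(F,\SO n)\leq\abs{F-R}^2$ trivially. Chaining these three facts gives \eqref{plan:rig} with the same dimensional constant, and dividing by $k^n$ makes the estimate uniform in $k$.

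The only genuine subtlety is making sure the constant in the geometric rigidity estimate does not degenerate as $k\to\infty$. This is fine because the estimate on the unit cube $Y$ transfers to $kY$ by the rescaling $y\mapsto y/k$: if $\psi(z):=\tfrac1k\varphi(kz)+Fz$ on $Y$, then $\grad\psi(z)=F+\grad\varphi(kz)$, the $L^2$-norms scale by $k^{n/2}$, and the rigidity constant for $Y$ applies verbatim to $\psi$; undoing the change of variables recovers the claim on $kY$ with the \emph{same} constant $c(n)$. One should also note the integrability: a priori $\varphi$ lies in $W^{1,p}_\per$ with the $p$ of (W1), but the right-hand side of \eqref{plan:rig} may be infinite unless $\grad\varphi\in L^2$; in that case the desired inequality $W\mc(F)\geq\tfrac1{a'}\dist^2(F,\SO n)$ holds trivially, and when the right-hand side is finite we automatically have $\grad\varphi\in L^2(kY)$ by (W3), so the rigidity estimate applies. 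Hence \eqref{plan:rig}, and therefore Lemma~\ref{lem:4}, follows; together with the standard facts about $W\mc$ recalled above, this completes the proof of Lemma~\ref{lem:2}.
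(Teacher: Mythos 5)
Your proof is correct, but it follows a genuinely different route from the paper's. The paper first passes from $\dist^2(\cdot,\SO n)$ to its quasiconvexification $\mathrm{Q}\!\dist^2(\cdot,\SO n)$, uses the quasiconvexity inequality for periodic test functions on the cube $m_kY$ to pull the integral down to $\mathrm{Q}\!\dist^2(F,\SO n)$, and then invokes Zhang's theorem that $\mathrm{Q}\!\dist^2(\cdot,\SO n)\geq c_n\dist^2(\cdot,\SO n)$. You instead prove the periodic rigidity estimate \eqref{plan:rig} directly via the Friesecke--James--M\"uller geometric rigidity theorem (Theorem~\ref{thm:rigidity}), combined with the observation that gradients of periodic functions are $L^2$-orthogonal to constants, so that $\abs{F-R}^2$ and then $\dist^2(F,\SO n)$ are controlled; the scale-invariance of the rigidity constant on cubes makes the bound uniform in $k$. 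Both arguments produce a constant depending only on $n$ and the constant in (W3). Your version has the merit of reusing machinery the paper already deploys (Theorem~\ref{thm:rigidity} and the computation behind Lemma~\ref{lem:1} --- note, though, that Lemma~\ref{lem:1} as stated controls $\int\abs{\grad\psi}^2$ rather than $\dist^2(F,\SO n)$, so you do need the variant you reprove rather than a literal citation), and of handling the integrability issue for $p<2$ explicitly; the paper's version is softer in that it needs only quasiconvexity plus Zhang's algebraic lower bound on the envelope, rather than the full quantitative rigidity theorem.
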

\begin{proof}
  By definition we can find for each $k\in\Nn$ a
  number $m_k\in\Nn$ and a map $\psi_k\in\fs W^{1,p}_\per(m_kY;\Rrv n)$
  such that
  \begin{equation*}
    W\mc(F)+\frac{1}{k}\geq
    \frac{1}{m_k^n}\int\limits_{m_kY}W(y,F+\grad\psi_k(y))\ud y.
  \end{equation*}
  We apply the non-degeneracy condition (W3) to the right hand side and
  obtain
  \begin{equation*}
    W\mc(F)+\frac{1}{k}\geq
    \frac{1}{a}\frac{1}{m_k^n}\int\limits_{m_kY}\dist^2(F+\grad\psi_k(y),\SO n)\ud y.
  \end{equation*}
  By replacing the map $F\mapsto\dist^2(F,\SO n)$ by its
  quasiconvexification  $\mathrm{Q}\!\dist^2(\cdot,\SO n)$ we get a
  lower estimate:
  \begin{equation*}
    W\mc(F)+\frac{1}{k}\geq
    \frac{1}{a}\left(\frac{1}{m_k^n}\int\limits_{m_kY}\mathrm Q\!\dist^2(F+\grad\psi_k(y),\SO n)\ud y\right).
  \end{equation*}
  Because $m_kY$ is a quadratic domain and $\psi_k$ is
  $m_kY$-periodic, we see that the integral in the braces is bounded
  from below by $\mathrm Q\!\dist^2(F,\SO n)$ due to quasiconvexity. Thus,
  \begin{equation*}
    W\mc(F)+\frac{1}{k}\geq \frac{1}{a}\mathrm{Q}\!\dist^2(F,\SO n).
  \end{equation*}
  K. Zhang proved in \cite{Zhang1997} that the quasiconvexification
  $\mathrm{Q}\!\dist^2(\cdot,\SO n)$ can be bounded from below by
  $c_n\dist^2(\cdot,\SO n)$ where $c_n$ is a positive constant. Hence,
  we arrive at
  \begin{equation*}
    W\mc(F)+\frac{1}{k}\geq \frac{1}{a^\prime}\,\dist^2(F,\SO n)
  \end{equation*}
  where $a^\prime:=\frac{c_n}{a}>0$. Passing to the limit
  $k\rightarrow\infty$ completes the proof.
 \end{proof}

\section{Equi-coercivity based on geometric rigidity}\label{sec:equi-coerc-based}
In this section we prove Proposition~\ref{prop:2}. First, we would
like to remark that  Proposition~\ref{prop:2} is an equi-coercivity
statement. For instance the proposition implies that whenever a family of scaled displacements
$(g_{\varepsilon,h})_{\varepsilon,h}\subset\fs L^2(\Omega;\Rrv n)$ has equi-bounded energy, i.e. 
\begin{equation*}
  \limsup\limits_{(\varepsilon,h)\rightarrow (0,0)}\energy
  I^{\varepsilon,h}(g_{\varepsilon,h})<\infty,
\end{equation*}
then it is  relatively compact in $\fs L^2(\Omega;\Rrv n)$ and we can extract
a subsequence that strongly converges to  a map $g\in\boundaryset$ in
$\fs L^2(\Omega;\Rrv n)$. If additionally each element of the sequence
$(g_{\varepsilon,h})$ has finite energy, then the construction of $\Psi$ reveals that the relative compactness
also holds with respect to the weak topology in $\fs
W^{1,2}(\Omega;\Rrv n)$. In some sense this observation a priori
justifies the presentation of the scaled energy in terms of the scaled displacement.

A first step towards the proof of Proposition~\ref{prop:2} is to show that the
norm of a displacement gradient can be controlled by the associated
energy. Because of the non-degeneracy condition (W3), we want to
establish an estimate in the form
\begin{equation}\label{eq:18}
\forall g\in\boundaryset\,:\,\int\limits_\Omega\abs{\grad g}^2\ud x\leq
C\frac{1}{h^2}\int\limits_\Omega\dist^2(\id+h\grad g(x),\SO n)\ud x
\end{equation}
for a constant $C$ that is independent of  $h$.
A key ingredient in the proof of this estimate is the geometric
rigidity estimate by G.~Friesecke, R.D.~James and the first author:
\begin{theorem}[Geometric rigidity estimate \cite{Fries2002}]\label{thm:rigidity}
Let $U$ be a bounded Lipschitz domain in $\Rrv n$, $n\geq 2$. There exists a  constant $C(U)$ with the following property: For each $v\in\fs W^{1,2}(U;\Rrv n)$ there is an associated rotation $ R\in\SO n$ such that
 \begin{equation*}
  \int\limits_U\abs{\grad v(x)- R}^2\ud x\leq C(U)\int\limits_U\dist^2(\grad v(x),\SO n)\ud x.
 \end{equation*}
Moreover, the constant $C(U)$ is invariant under uniform scaling of $U$.
\end{theorem}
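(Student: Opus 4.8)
The plan is to follow the argument of Friesecke, James and the first author \cite{Fries2002}: one establishes the estimate first for a cube $Q=(0,1)^n$ and then passes to general bounded Lipschitz domains by a covering-and-chaining argument, scaling invariance being automatic from the construction. So fix $v\in\fs W^{1,2}(Q;\Rrv n)$ and set $\varepsilon^2:=\int_Q\dist^2(\grad v,\SO n)$. Comparing $\grad v(x)$ pointwise with a nearest rotation gives $\int_Q\abs{\grad v}^2\le 2\varepsilon^2+2n$, so if $\varepsilon^2\ge n$ one may simply take $R=\id$: then $\int_Q\abs{\grad v-\id}^2\le 2\int_Q\abs{\grad v}^2+2n\le 10\varepsilon^2$. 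Thus we may assume $\varepsilon$ is small, and the heart of the matter is the combination of a Korn-type rigidity estimate, valid when $\grad v$ is uniformly close to $\SO n$, with a Lipschitz truncation reducing the general case to that situation.

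First I would prove: there is an absolute $\delta>0$ and a dimensional constant $C_n$ such that every $w\in\fs W^{1,\infty}(Q;\Rrv n)$ with $\dist(\grad w(x),\SO n)\le\delta$ for a.e. $x$ admits $R\in\SO n$ with $\int_Q\abs{\grad w-R}^2\le C_n\int_Q\dist^2(\grad w,\SO n)$. This is essentially Korn's inequality: choosing a fixed $\bar R\in\SO n$ near the mean of $\grad w$ and writing $\grad w=\bar R(\id+G)$ with $\abs G$ uniformly small, the nonlinear strain satisfies $\dist(\grad w,\SO n)=\abs{\sym G}+O(\abs G^2)$ uniformly in $x$; Korn's inequality on $Q$ then produces a skew-symmetric matrix $A$ with $\norm{\grad w-\bar R(\id+A)}_{\fs L^2(Q)}\le C\norm{\sym G}_{\fs L^2(Q)}$, and replacing the affine map $\bar R(\id+A)$ by the nearby rotation $\bar R\exp A$ gives the claim with an absolute constant.

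For a general $v$ with $\varepsilon$ small one constructs a competitor $w$ to which the previous step applies, via a two-stage Lipschitz truncation. First one truncates where $\grad v$ is large: with a dimensional threshold $M\sim\sqrt n$ chosen so that $\abs{\grad v}>\tfrac12 M$ forces $\dist(\grad v,\SO n)\ge\tfrac12\abs{\grad v}$, the maximal-function truncation of $v$ at level $M$ yields $\tilde w\in\fs W^{1,\infty}(Q;\Rrv n)$ with $\norm{\grad\tilde w}_{\fs L^\infty}\le C_n$, $\abs{\{v\ne\tilde w\}}\le C_n\varepsilon^2$ and $\int_Q\abs{\grad v-\grad\tilde w}^2\le C_n\varepsilon^2$, hence also $\int_Q\dist^2(\grad\tilde w,\SO n)\le C_n\varepsilon^2$. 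The gradient $\grad\tilde w$ is now bounded but only $\fs L^2$-close to $\SO n$; a second truncation removing the set $\{\dist(\grad\tilde w,\SO n)>\delta\}$ — which has measure $\le C_n\varepsilon^2$ and on which $\grad\tilde w$ is already bounded — produces $w$ with $\dist(\grad w,\SO n)\le C_n\delta$ everywhere and $\int_Q\abs{\grad\tilde w-\grad w}^2\le C_n\varepsilon^2$. Applying the Korn step to $w$ gives $R\in\SO n$ with $\int_Q\abs{\grad w-R}^2\le C_n\varepsilon^2$, and the triangle inequality yields $\int_Q\abs{\grad v-R}^2\le C_n\varepsilon^2$, completing the cube case.

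To pass to a general bounded Lipschitz domain $U$, one covers $U$ by finitely many overlapping pieces $U_1,\dots,U_N$ on each of which the estimate is already available — cubes in the interior and bi-Lipschitz images of cubes along a boundary collar — chosen so that $U=\bigcup_i(U_i\cap U)$ and the overlap graph, with an edge whenever $\abs{U_i\cap U_j\cap U}>0$, is connected; the Lipschitz (hence John) character of $U$ is precisely what makes such a cover possible. The estimate on each $U_i$ gives $R_i\in\SO n$, and on each overlap $\abs{R_i-R_j}^2\le C(U)\int_U\dist^2(\grad v,\SO n)$ (since $R_i-R_j$ is constant and pointwise controlled by $\abs{\grad v-R_i}^2+\abs{\grad v-R_j}^2$), so chaining along the connected graph puts every $R_i$ within $C(U)\norm{\dist(\grad v,\SO n)}_{\fs L^2(U)}$ of $R:=R_1$; summing over $i$ gives the estimate on $U$. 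Scaling invariance is then immediate, since for $U_\lambda=\lambda U$ and $v_\lambda(x)=\lambda v(x/\lambda)$ both sides of the inequality scale by the same factor $\lambda^n$. The main obstacle throughout is the truncation step: one must manufacture a competitor whose gradient lies uniformly in a fixed neighbourhood of $\SO n$ — so that the Korn step can be applied on the whole cube — while keeping the measure of the modified set, the $\fs L^2$-distance of the gradients, and $\int_Q\dist^2(\grad w,\SO n)$ all within a fixed dimensional multiple of $\varepsilon^2$. The tension between ``the set where $\grad v$ misbehaves has measure only $O(\varepsilon^2)$'' and ``on that set $\grad v$ need not be small'' is exactly what forces the two-stage procedure and the careful tracking of dimensional constants; this quantitative truncation, which replaces the merely qualitative rigidity of Liouville's and Reshetnyak's theorems, is the real content of the result, whereas the chaining argument for general domains is comparatively routine.
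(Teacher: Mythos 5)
The paper itself offers no proof of this statement: it is quoted as a known result from \cite{Fries2002} and used as a black box, so the only meaningful comparison is with the proof given in that reference. Your outer structure (reduce to a cube, dispose of the case of large excess by taking $R=\id$, truncate to bounded gradients, finish by a Korn-type linearization, chain over a connected cover for general Lipschitz domains, read off scaling invariance) does match Friesecke--James--M\"uller, and your first truncation and the chaining argument are fine.

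The gap is the ``second truncation''. You assert that, given $\tilde w$ with $\norm{\grad\tilde w}_{L^\infty}\le C_n$ and $\abs{\{\dist(\grad\tilde w,\SO n)>\delta\}}\le C_n\varepsilon^2/\delta^2$, one can modify $\tilde w$ on that bad set so as to obtain $w$ with $\dist(\grad w,\SO n)\le C_n\delta$ \emph{everywhere}, while changing the gradient by only $O(\varepsilon^2)$ in $L^2$. No such construction exists as a black box. Maximal-function (Lipschitz) truncation controls the \emph{magnitude} of the new gradient on the modified set --- you get $\abs{\grad w}\le C$ there, hence only $\dist(\grad w,\SO n)\le C$, not $\le C_n\delta$ --- because the extension used on the bad set knows nothing about the nonconvex constraint set $\SO n$. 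Manufacturing a competitor whose gradient lies uniformly in a fixed small neighbourhood of $\SO n$ is precisely the hard core of the theorem, and your scheme uses no property of $\SO n$ beyond its being a smooth compact manifold; an argument of this type would apply verbatim to constraint sets admitting rank-one connections (e.g. $O(n)$), for which the estimate is false because of laminates. In \cite{Fries2002} this step is carried out differently: since $\operatorname{div}\operatorname{cof}\grad v=0$, one has $\Delta v=\operatorname{div}(\grad v-\operatorname{cof}\grad v)$ with $\abs{F-\operatorname{cof}F}\le C_M\dist(F,\SO n)$ for $\abs{F}\le M$, so $v$ splits into a harmonic part plus a correction of size $O(\varepsilon)$ in $W^{1,2}$; interior estimates for harmonic functions then yield the pointwise closeness of the gradient to $\SO n$ on subcubes, and a covering plus a weighted Poincar\'e inequality assemble the estimate on the whole cube. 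Relatedly, your ``Korn step'' is understated: pointwise $\dist(\grad w,\SO n)\le\delta$ does not by itself allow you to write $\grad w=\bar R(\id+G)$ with $\abs{G}$ uniformly small for a single $\bar R$, since the nearest rotation may drift across the (connected) $\delta$-neighbourhood of $\SO n$ from point to point; showing that the local rotation is slowly varying is itself a nontrivial rigidity statement (F.~John's theorem), not an application of Korn's inequality.
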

In virtue of this result we can assign to each $g\in\boundaryset$ and
positive parameter $h$  a single rotation ${R\in\SO n}$
such that
\begin{equation*}
  \norm{\frac{\id+h\grad g(x)-R}{h}}^2_{\fs L^2(\Omega;\Rrm n)}\leq
  C(\Omega)\frac{1}{h^2}\int\limits_\Omega\dist^2(\id+h\grad g(x),\SO n)\ud x.
\end{equation*}
Let us assume for a moment that $R=\id$. Then the previous estimate and
the non-degeneracy of $W$ directly imply \eqref{eq:18}. In general a similar
observation is valid: By taking the Dirichlet boundary condition imposed on
$\gamma$ into account we can eliminate the free rotation. This has been
shown by G.~Dal Maso et al. in \cite{DalMaso2002}. In particular, they
proved the following result:
\begin{lemma}[see Lemma 3.3 in \cite{DalMaso2002}]\label{lem:5}
Let $\Omega$ be an open and bounded Lipschitz domain in $\Rrv n$ and
$\gamma$ a measurable subset of $\partial\Omega$ with positive
$(n-1)$-dimensional Hausdorff measure. For $F\in\Rrm n$ define
\begin{equation*}
  \abs{F}_\gamma^2:=\min\limits_{b\in\Rrv
    n}\int\limits_{\gamma}\abs{Fx-b}^2\,\mathrm{d}\mathcal H^{n-1}(x).
\end{equation*}
Then there exists a positive constant $C$ such that
\begin{equation*}
  \abs{F}^2\leq C \abs{F}_\gamma^2
\end{equation*}
for all matrices $F$ that belong to the union of the cone generated by $\id-\SO
n$ and the set of skew symmetric matrices in $\Rrm n$.  
\end{lemma}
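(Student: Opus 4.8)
The plan is to combine the $2$-homogeneity of the two quantities with a compactness argument. Write $K$ for the union of the cone generated by $\id-\SO n$ and $\asym(n)$. I would first note that $K$ is a closed cone: it is plainly a cone, and the only way the cone over the compact set $\id-\SO n$ can fail to be closed is along sequences $t_k(\id-R_k)$ with $R_k\to\id$ and $t_k\to\infty$; writing $R_k=\exp A_k$ with $A_k\in\asym(n)$, $A_k\to0$, one has $\id-R_k=-A_k+O(|A_k|^2)$, so such a sequence can only converge to a skew-symmetric matrix, which already belongs to $K$. Since $F\mapsto|F|^2$ and $F\mapsto|F|_\gamma^2$ are both positively $2$-homogeneous (for the latter, replace $b$ by $tb$ in the minimization), it suffices to prove
\[
  \mu:=\inf\bigl\{\,|F|_\gamma^2\,:\,F\in K,\ |F|=1\,\bigr\}>0,
\]
and then the assertion holds with $C=\mu^{-1}$.

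Next I would record that $F\mapsto|F|_\gamma^2$ is continuous on $\Rrm n$: since $\Omega$ is bounded Lipschitz the measure $\mathcal H^{n-1}$ restricted to $\gamma$ is finite, the optimal $b$ is the affine average $b(F)=\mathcal H^{n-1}(\gamma)^{-1}\int_\gamma Fx\,\mathrm d\mathcal H^{n-1}$, which is linear in $F$, and hence $|F|_\gamma^2=\int_\gamma|Fx-b(F)|^2\,\mathrm d\mathcal H^{n-1}$ is a quadratic form in the entries of $F$. As $K\cap\{|F|=1\}$ is compact, $\mu$ is attained at some $F_0\in K$ with $|F_0|=1$. Assume, for contradiction, that $\mu=0$. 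Then $\int_\gamma|F_0x-b_0|^2\,\mathrm d\mathcal H^{n-1}=0$ with $b_0=b(F_0)$, i.e. $F_0x=b_0$ for $\mathcal H^{n-1}$-a.e. $x\in\gamma$. Hence $\gamma$ is, up to an $\mathcal H^{n-1}$-null set, contained in the nonempty affine subspace $\{x:F_0x=b_0\}$, whose dimension equals $n-\operatorname{rank}F_0$. A proper affine subspace of dimension at most $n-2$ is $\mathcal H^{n-1}$-null, so $\mathcal H^{n-1}(\gamma)>0$ forces $\operatorname{rank}F_0\le1$, and since $|F_0|=1\ne0$ we conclude $\operatorname{rank}F_0=1$.

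It then remains to rule out rank-one matrices in $K$ — the structural heart of the lemma. A skew-symmetric matrix $A$ of rank at most $1$ must vanish: writing $A=u\otimes v$ and using $A\transpose=-A$, applying both sides to $u$ shows (when $u\ne0$) that $v$ is parallel to $u$, so $A$ is symmetric and therefore $A=0$. And $t(\id-R)$ with $t>0$ and $R\in\SO n$ vanishes whenever $\operatorname{rank}(\id-R)\le1$: then $R$ fixes pointwise a subspace $V$ with $\dim V\ge n-1$, hence $R$ preserves $V^\perp$ and acts on it as an orthogonal map of determinant $\det R=1$; in dimension at most $1$ this forces $R=\id$. Thus every nonzero matrix in $K$ has rank at least $2$, contradicting $\operatorname{rank}F_0=1$; therefore $\mu>0$, proving the lemma.

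I expect the only genuinely delicate point to be the closedness of $K$ near the origin — equivalently, compactness of $K\cap\{|F|=1\}$: one must see that the closure of the cone over $\id-\SO n$ adds exactly the skew-symmetric matrices, which is precisely why the statement is formulated with the union with $\asym(n)$, and why the determinant constraint ($\SO n$ rather than $O(n)$) is indispensable in the rank computation of the last step. The remaining ingredients are a routine quadratic-form estimate and elementary linear algebra.
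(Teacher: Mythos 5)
Your proof is correct: the reduction by $2$-homogeneity and compactness to the unit sphere of the (closed) cone, the identification of $\abs{\cdot}_\gamma^2$ as a quadratic form, and the key structural fact that every nonzero element of the cone over $\id-\SO n$ or of $\asym(n)$ has rank at least two are all sound, and this is essentially the compactness argument of Lemma 3.3 in the cited reference \cite{DalMaso2002} (the present paper states the lemma without proof). Your closedness analysis of the cone near the origin, showing that its closure adds exactly the skew-symmetric matrices, correctly explains why the statement is formulated for the union.
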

In the lemma above $\mathcal H^{n-1}$ denotes the $(n-1)$-dimensional
Hausdorff measure. With this result at hand we are in position to prove the proposition:
\begin{proof}[Proof of Proposition~\ref{prop:2}]
 It follows from the proof of Proposition 3.4 in  \cite{DalMaso2002} that
\begin{equation}\label{eq:19}
  \norm{g}^2_{\fs W^{1,2}(\Omega;\Rrv n)}\leq
  c^\prime\frac{1}{h^2}\int\limits_\Omega\dist^2(\id+h\grad g(x),\SO n)\ud
  x
\end{equation}
for all $g\in\boundaryset$ and all $h\in(0,1)$.
Here and below $c^\prime$ denotes a positive constant that may vary from line to line, but can be chosen only depending on $W$ and the geometry of
  $\Omega$ and $\gamma$. For the sake of completeness we
  briefly sketch the proof: In
  virtue of Theorem~\ref{thm:rigidity} we can assign to each
  $g\in\boundaryset$ and $h\in(0,1)$ a rotation $R\in\SO n$ such that
  \begin{equation*}
    \int\limits_\Omega \abs{\id+h\grad g(x)-R}^2\leq
    c^\prime\dist^2(\id+h\grad g(x),\SO n)\ud x.
  \end{equation*}
  Since  $h\grad g=(\id+h\grad g-R)-(\id-R)$, we have
  \begin{equation*}
    \norm{g}^2_{\fs W^{1,2}(\Omega;\Rrv n)}\leq
    c^\prime\frac{1}{h^2}\int\limits_\Omega\dist^2(\id+h\grad g(x),\SO
    n)\ud x+2\norm{\frac{R-\id}{h}}^2_{\fs L^2(\Omega;\Rrv n)}.
  \end{equation*}
  Set $u(x):=(\id - R)x+hg(x)-u_\Omega$ where $u_\Omega\in\Rrv n$ is
  chosen in such a way that $u$ is mean value free. Because $g(x)=0$
  on $\gamma$, we have $u(x)=(\id-R)x-u_\Omega$ on $\gamma$ and
  Lemma~\ref{lem:5} implies that
  \begin{equation*}
    \abs{\id-R}^2\leq C\abs{\id-R}^2_\gamma\leq C
    \int\limits_\gamma\abs{u(x)}^2\,\mathrm d\mathcal H^{n-1}(x)\leq c^{\prime}\int\limits_\Omega\abs{\grad u(x)}^2\ud x,
  \end{equation*}
  where we used the continuity of the trace operator and
  Poincar\'e-Wirtinger inequality.  
  Because of the identity $\grad u=\id+h\grad g-R$, the right hand side is controlled
  by $\int_\Omega\dist^2(\id+h\grad g(x),\SO n)\ud x$ and
  \eqref{eq:19} follows.
  
  By appealing to the non-degeneracy condition (W3) and Lemma~\ref{lem:4}, we
  immediately see that
  \begin{equation*}
    \min\{\,\energy I^{\varepsilon,h}(g),\,\energy I^h_\hom(g)\,\}\geq
    c^\prime\frac{1}{h^2}\int\limits_\Omega\dist^2(\id+h\grad g(x),\SO n)\ud x
  \end{equation*}
  for all $g\in\boundaryset$ and all $\varepsilon,h>0.$   Since  the energies are infinite
  whenever $g\not\in\boundaryset$, the previous estimate and
 \eqref{eq:19} imply that
  \begin{equation*}
    \min\{\,\mathcal I^{\varepsilon,h}(g),\,\mathcal I^{h}_\hom(g)\}\geq c^\prime\Psi(g).
  \end{equation*}
  for all $\varepsilon,h>0$ and $g\in\fs L^2(\Omega;\Rrv n)$.

  Next, we consider the energies $(\energy I^\varepsilon_\lin)$ and
  $\energy I^0$. It is not difficult to show that (W3) and (W4) imply that
  \begin{equation*}
    \min\left\{\,Q(y,F),\,Q\super 1_\hom(F)\,\right\}\geq c^\prime\abs{\sym F}^2\qquad\text{for all $F\in\Rrm n$
      and a.e. $y\in\Rrv n$.}
  \end{equation*}
  Thus, it is sufficient to prove that
  \begin{equation}\label{eq:20}
    \norm{g}^2_{\fs W^{1,2}(\Omega;\Rrv n)}\leq c^\prime
    \int\limits_\Omega\abs{\sym\grad g(x)}^2\ud x\qquad\text{for all }g\in\boundaryset.
  \end{equation}
  This can be seen as follows: Let $g\in\boundaryset$. By Korn's
  inequality there exists a skew symmetric matrix
  $K\in\Rrm n$  such that
  \begin{equation*}
    \norm{\grad g-K}^2_{\fs L^2(\Omega;\Rrm n)}\leq
    c^\prime\int\limits_\Omega\abs{\sym\grad g(x)}^2\ud x.
  \end{equation*}
  We set $u(x):=g(x)-Kx-u_\Omega$ where $u_\Omega\in\Rrv n$ is chosen in
  such a way that $u$ has vanishing mean value. As before we can apply
  Lemma~\ref{lem:1} and find
  \begin{equation*}
    \abs{K}^2\leq C\abs{K}_\gamma^2\leq
    c^\prime\int\limits_{\Omega}\abs{\grad u}^2\ud x.
  \end{equation*}
  Since $\grad u=\grad g-K$, we find that
  \begin{equation*}
    \norm{g}_{\fs W^{1,2}(\Omega;\Rrv n)}\leq c^\prime\norm{\grad
      g}_{\fs L^2(\Omega;\Rrm n)}\leq c^\prime\left(\,\norm{\grad u}_{\fs
      L^2(\Omega;\Rrm n)}+\abs{K}\,\right).
  \end{equation*}
  Now \eqref{eq:20} follows because the right hand side is controlled
  by $\norm{\sym\grad g}_{\fs L^2(\Omega;\Rrm n)}$.
 \end{proof}

\begin{remark}\label{sec:see-lemma-3.3}
  In the proof we did not use the property that $W$ satisfies the growth and
  coercivity condition (W1). Moreover, we could deduce from
  Theorem~\ref{thm:4} and general properties of $\Gamma$-convergence,
  that $(\mathcal I^h_\hom)$ and $(\mathcal I^\varepsilon_\lin)$ and
  $\mathcal I^0$ are equi-coercive whenever $(\mathcal
  I^{\varepsilon,h})$ is equi-coercive. 
\end{remark}

Another useful observation is the following:
\begin{lemma}\label{lem:1}
 There exists a positive constant $c$ such that
\begin{align*}
 \frac{1}{h^2}\int\limits_{k Y}\dist^2(\id+h\grad \psi(x),\SO n)\ud x\geq c\int\limits_{kY}\abs{\grad\psi}^2\ud y
\end{align*}
for all $h>0$, $k\in\Zz$ and maps $\psi\in\fs W^{1,2}_\per(kY;\Rrv n)$.
\end{lemma}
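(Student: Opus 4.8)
\emph{Proof plan.} The plan is to reduce the estimate to the geometric rigidity estimate (Theorem~\ref{thm:rigidity}) applied on the cube $kY$, and then to use the periodicity of $\psi$ to eliminate the free rotation produced by rigidity. Throughout we may assume $k\in\Nn$, since for $k\le 0$ the statement is void.

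First I would apply Theorem~\ref{thm:rigidity} to the map $v(x):=x+h\psi(x)$ on the Lipschitz domain $U:=kY$. Since $\grad v=\id+h\grad\psi$, this produces a rotation $R=R_{k,h,\psi}\in\SO n$ with
\[
  \int_{kY}\abs{\id+h\grad\psi(x)-R}^2\ud x\;\leq\; C(kY)\int_{kY}\dist^2(\id+h\grad\psi(x),\SO n)\ud x .
\]
The crucial structural point is that, by the scale invariance asserted in Theorem~\ref{thm:rigidity}, one has $C(kY)=C(Y)$, so the constant does not deteriorate as $k\to\infty$ and is also independent of $h$.

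Next I would control the rotation defect $\abs{\id-R}$ by the same right-hand side. Because $\psi$ is $kY$-periodic, its gradient has vanishing mean over the cube, $\int_{kY}\grad\psi\ud x=0$. Integrating the identity $h\grad\psi=(\id+h\grad\psi-R)-(\id-R)$ over $kY$ therefore gives $\abs{kY}(\id-R)=\int_{kY}(\id+h\grad\psi-R)\ud x$, and Jensen's (Cauchy--Schwarz) inequality yields
\[
  \abs{\id-R}^2\;\leq\;\frac{1}{\abs{kY}}\int_{kY}\abs{\id+h\grad\psi(x)-R}^2\ud x .
\]
Using $\abs{h\grad\psi}^2\leq 2\abs{\id+h\grad\psi-R}^2+2\abs{\id-R}^2$ and integrating, the last two displays combine to $h^2\int_{kY}\abs{\grad\psi}^2\ud x\leq 4\int_{kY}\abs{\id+h\grad\psi-R}^2\ud x$; together with the rigidity estimate above and division by $h^2$ this gives the claim with $c:=1/(4C(Y))$, a constant depending only on the dimension $n$.

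There is no serious obstacle here; the only points requiring care are that the rigidity constant be scale-invariant (so that it survives the limit $k\to\infty$), which is exactly the additional clause of Theorem~\ref{thm:rigidity}, and that the a priori arbitrary rotation $R\in\SO n$ can actually be estimated in terms of the right-hand side — for which the mean-zero property of $\grad\psi$ forced by periodicity is precisely what is needed.
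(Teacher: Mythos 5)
Your proposal is correct and follows essentially the same route as the paper: apply the geometric rigidity estimate to $v(x)=x+h\psi(x)$ on $kY$, use the scale invariance of the rigidity constant, and exploit that $\grad\psi$ has zero mean by periodicity to control the rotation. The only (immaterial) difference is in the final algebra: the paper expands the square $\int_{kY}\bigl\vert\frac{R-\id}{h}+\grad\psi\bigr\vert^2$ and drops the nonnegative constant term after the cross term vanishes, whereas you bound $\abs{\id-R}$ separately by Jensen and then use the triangle inequality, losing a harmless factor of $4$.
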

\begin{proof}[Proof of Lemma \ref{lem:1}]
Set $v(x){:=}x+h\psi(x)$. By Theorem \ref{thm:rigidity} there exists a rotation $R\in\SO n$ satisfying
 \begin{equation*}
	\int\limits_{kY}\abs{\frac{ R-\id}{h}-\grad\psi(y)}^2\ud y\leq C\frac{1}{h^2}\int\limits_{kY}\dist^2(\id+h\grad\psi(y),\SO n)\ud y.
 \end{equation*}
The constant $C$ is independent of $h$, $k$ and  $\psi$, because the
domain $kY$ is obtained by uniformly scaling the cell $Y$. We expand
the left hand side and see that
\begin{equation*}
   \int\limits_{kY}\abs{\frac{R{-}\id}{h}+\grad\psi(y)}^2\ud y= \int\limits_{kY}\abs{\frac{R-\id}{h}}^2+2\iprod{\frac{R-\id}{h}}{\grad\psi(y)}+\abs{\grad\psi(y)}^2\ud y.
\end{equation*}
Because gradients of functions in $\fs W^{1,2}_\per(kY;\Rrv n)$ are
orthogonal to constant matrices (with respect to the standard inner
product in $\fs L^2(kY;\Rrm n)$), we deduce that the integral over the coupling term in the middle vanishes and we immediately obtain
 \begin{equation*}
	\int\limits_{kY}\abs{\grad\psi(y)}^2\ud y\leq C\frac{1}{h^2}\int\limits_{kY}\dist^2(\id+h\grad\psi(y),\SO n)\ud y.
 \end{equation*}
 \end{proof}

\section{Expansion of the multi-cell homogenization formula. Proof of
  Theorem \ref{thm:1}.}\label{sec:exp}
For convenience we define for $x\in\Rrv n$ and $G\in\Rrm
n\setminus\{0\}$ the remainders 
\begin{equation}\label{eq:3}
  \restterm(x,G):=\frac{W(x,\id+G)-Q(x,G)}{\abs{G}^2}
\end{equation}
and
\begin{equation*}
  \restterm_\hom(G):=\frac{W\mc(\id+G)-Q\super 1_\hom(G)}{\abs{G}^2}.
\end{equation*}
In order to prove Theorem~\ref{thm:1} it is sufficient to show the
following: For any sequence of matrices $(G_k)$ in $\Rrm n\setminus\{0\}$ with
$\abs{G_k}\rightarrow 0$ there holds
\begin{equation}\label{eq:12}
  \limsup\limits_{k\rightarrow\infty}\restterm_\hom(G_k)=0.
\end{equation}
Because the normalized sequence $(\abs{G_k}^{-1}G_k)$ is relatively compact, it
is sufficient to consider sequences that additionally satisfy
\begin{equation}\label{eq:1}
  H_k:=\frac{G_k}{\abs{G_k}}\rightarrow G\qquad\text{in }\Rrm n\text{
    as $k\rightarrow \infty$}.
\end{equation}
In the sequel we separately prove that
\begin{align}
  \label{eq:13}&\limsup\limits_{k\rightarrow
    0}\frac{1}{\abs{G_k}^2}W\mc(\id+G_k)\leq Q\super 1_\hom(G),\\
  \label{eq:14}&\liminf\limits_{k\rightarrow
    0}\frac{1}{\abs{G_k}^2}W\mc(\id+G_k)\geq Q\super 1_\hom(G).
\end{align}
Clearly, the validity of both estimates is equivalent to
\eqref{eq:12}.

\step 1 We prove the \textbf{upper bound} estimate \eqref{eq:13}.
Because $Q$ is a Carath\'eodory function quadratic in its second
variable, the functional
\begin{equation*}
  W^{1,2}_\per(Y;\Rrv n)\ni\varphi\mapsto
  \int_YQ(y,G+\grad\varphi(y))\ud y
\end{equation*}
is lower semicontinuous with respect to weak convergence and continuous
with respect to strong convergence. Furthermore, the strict convexity
of $Q(x,\cdot)$ on the subspace of symmetric matrices is sufficient to
guarantee that the functional admits a minimizer in $\fs
W^{1,2}_\per(Y;\Rrv n)$ (see Remark~\ref{rem:1}). The
inclusion  $\fs C_\per^\infty(Y;\Rrv n)\subset\fs W^{1,2}_\per(Y;\Rrv
n)$ is dense; thus, by the strong continuity of the functional we find for
every $\eta>0$ a map $\psi\in\fs C_\per^\infty(Y;\Rrv n)$
such that
\begin{equation*}
  Q\super 1_\hom(G)=\min\limits_{\varphi\in\fs W^{1,2}_\per(Y;\Rrv
    n)}\int\limits_YQ(y,G+\grad\varphi(y))\ud y\geq
  \int\limits_YQ(y,G+\grad\psi(y))\ud y -\eta.
\end{equation*}
Based upon this choice we derive an upper bound for the left hand
side in \eqref{eq:2}: By construction the multi-cell homogenization
 $W\mc(\cdot)$ is bounded from above by the one-cell homogenization $W\super
1_\hom(\cdot)$; thus, we obtain
\begin{equation*}
  W\mc(\id+ G_k)\leq W\super 1_\hom(\id +G_k)\stackrel{(\star)}{\leq} \int\limits_Y W\left(y,\id +
  G_k+\abs{G_k}\grad\psi(y)\right)\ud y.
\end{equation*}
Inequality $(\star)$ follows from directly follows from the definition
of $W\super 1_\hom$. We expand the integrand on
the right hand side and deduce that for almost every $y\in Y$ we have
\begin{equation*}
  \frac{1}{\abs{G_k}^2}W(y,\id+G_k+\abs{G_k}\grad\psi(y))\leq Q(y,H_k+\grad\psi(y))+\restterm(y,G_k+\abs{G_k}\grad\psi(y))
\end{equation*}
where the remainder $\restterm$ is defined according to \eqref{eq:3}.
Because $(G_k)$ vanishes and $\psi\in\fs W^{1,\infty}(Y;\Rrv n)$, we have
\begin{equation*}
  G_k+\abs{G_k}\grad\psi(y)\rightarrow 0\qquad\text{uniformly};
\end{equation*}
thus, condition (W4) implies that
\begin{equation*}
  \limsup\limits_{k\rightarrow\infty}\int\limits_Y\restterm(y,G_k+\abs{G_k}\grad\psi(y))\ud y=0.
\end{equation*}
Consequently, the previous estimates, the convergence $H_k\rightarrow G$ and the continuity
of $Q$ lead to
\begin{equation*}
  \limsup\limits_{k\rightarrow\infty}\frac{1}{\abs{G_k}^2}W\mc(\id+G_k)\leq
  \int\limits_YQ(y,G+\grad\psi(y))\ud y\leq Q\super 1_\hom(G)+\eta.
\end{equation*}
Because $\eta>0$ can be chosen arbitrarily small, \eqref{eq:2} follows.

\step 2 We prove the \textbf{lower bound} estimate \eqref{eq:14}. We
only have to consider the case where
\begin{equation*}
  \liminf\limits_{k\rightarrow\infty}\frac{1}{\abs{G_k}^2}W\mc(\id+G_k)
\end{equation*}
is finite. We pass to a subsequence, that we do not relabel, such that
\begin{equation*}
  \liminf\limits_{k\rightarrow 0}\frac{1}{\abs{G_k}^2}W\mc(\id+G_k)=\limsup\limits_{k\rightarrow 0}\frac{1}{\abs{G_k}^2}W\mc(\id+G_k).
\end{equation*}
By definition, for all $k\in\Nn$ there exist a number
$m_k\in\Nn$ and a map
\begin{equation*}
  \psi_k\in\fs W^{1,2}_\per(m_kY;\Rrv
  n)\mtext{with}\int\limits_{m_kY}\psi_k(y)\ud y=0
\end{equation*}
such that
\begin{equation}\label{eq:6}
  W\mc(\id+G_k)+\frac{\abs{G_k}^2}{k}\geq \frac{1}{m_k^n}\int\limits_{m_kY}W(y,\id
  +G_k+\abs{G_k}\grad \psi_k(y))\ud y.
\end{equation}
This suggests to establish the lower bound estimate by applying the expansion
in condition (W4) to the integral on the right hand side. Clearly, if $\abs{\grad\psi_k(y)}$ was bounded by a
constant independent of $y$ and $k$, condition (W4) would immediately
imply \eqref{eq:14}. However, the sequence $(\grad\psi_k)$ is only bounded in the
following sense:
\begin{equation}\label{eq:4}
  C:=\limsup\limits_{k\rightarrow\infty}\frac{1}{m_k^n}\int\limits_{m_kY}\abs{\grad\psi_k}^2\ud
  y<\infty,
\end{equation}
as can be seen by appealing to the non-degeneracy condition (W3) and Lemma~\ref{lem:1}. Therefore, we distinguish points $y\in m_kY$ where
$\abs{\grad\psi_k(y)}$ is sufficiently small from those where
$\abs{\grad\psi_k(y)}$ is too large for an expansion. More precisely,
we define the set
\begin{equation}\label{eq:26}
  Y_k:=\left\{\,y\in m_kY\,:\,\abs{\grad\psi_k(y)}\leq \abs{G_k}^{-1/2}\,\right\}
\end{equation}
and denote the associated indicator function by $\chi_k$. The proof of
the lower bound estimate is divided in two steps. First, we show that
\begin{equation}\label{eq:5}
  \liminf\limits_{k\rightarrow\infty} \frac{1}{\abs{G_k}^2}W\mc(\id+G_k)\geq
  \liminf\limits_{k\rightarrow
    \infty}\frac{1}{m_k^n}\int\limits_{m_kY}Q\left(y,\chi_k(H_k+\grad\psi_k)\right)\ud y
\end{equation}
and in a second step, we prove that
\begin{equation}\label{eq:11}
  \liminf\limits_{k\rightarrow\infty}\frac{1}{m_k^n}\int\limits_{m_kY}
    Q(y,\chi_k(H_k+\grad\psi_k))\ud y-Q\super 1_\hom(G)\geq 0.
\end{equation}
It is obvious that the combination of both estimates justifies \eqref{eq:14}.

\step{3} (Proof of  \eqref{eq:5}). By construction we have
\begin{equation*}
  \esssup\limits_{y\in
    Y_k}\Big\vert\,G_k+\abs{G_k}\grad\psi_k(y)\,\Big\vert\rightarrow 0
\end{equation*}
and in view of condition (W4) we see that
\begin{equation}\label{eq:7}
  \limsup\limits_{k\rightarrow
    0}\frac{1}{m_k^n}\int\limits_{m_kY}\chi_k\abs{\frac{W(y,\id+G_k+\abs{G_k}\grad\psi_k)}{\abs{G_k}^2}-Q(y,H_k+\grad\psi_k)}\ud y=0.
\end{equation}
Now the non-negativity of $W$ implies that
$W(y,F)\geq\chi_k(y)W(y,F)$ for almost every $y\in m_kY$ and all
$F\in\Rrm n$; thus, estimate \eqref{eq:6} and \eqref{eq:7} immediately imply
that
\begin{equation*}
  \liminf\limits_{k\rightarrow\infty}
  \frac{1}{\abs{G_k}^2}W\mc(\id+G_k)\geq
  \liminf\limits_{k\rightarrow\infty}\frac{1}{m_k^n}\int\limits_{m_kY}\chi_kQ(y,H_k+\grad\psi_k)\ud y.
\end{equation*}
Because $\chi_k$ takes only values in $\{0,1\}$, we see that 
\begin{equation*}
  \chi_k(y)Q\Big(y,H_k+\grad\psi_k(y)\Big)=Q\Big(y,\chi_k(y)(H_k+\grad\psi_k(y))\Big)
\end{equation*}
for all $y\in m_kY$ and \eqref{eq:5} follows.

\step 4 (Proof of \eqref{eq:11}). This is the heart of the matter. Obviously, if the indicator function $\chi_k$ was equal to $1$, then
the integral in \eqref{eq:11} would be bounded from below by $Q\super
1_\hom(H_k)$ and the estimate would follow from the continuity of
$Q\super 1_\hom$. The general case would follow if we knew that
\begin{equation*}
  \frac{1}{m_k^n}\int_{m_kY}(1-\chi_k)\abs{H_k+\grad\psi_k}^2\ud
  x\rightarrow 0.
\end{equation*}
Because a priori $\abs{\grad\psi_k}^2$  could concentrate on the set
where $\chi_k=0$, this is not obvious at all. Since we aim for a lower
bound, we expand $Q(y,H_k+\grad\psi_k)$ around the minimizer
$G+\grad\psi_G$ (see below). Then the most dangerous quadratic term
has a sign.

Thus, let $\psi_G\in\fs W^{1,2}_\per(Y;\Rrv n)$ satisfy
\begin{equation*}
  Q\super 1_\hom(G)=\int\limits_YQ(y,G+\grad\psi_G(y))\ud y.
\end{equation*}
We extend $\psi_G$ by periodicity to $\Rrv n$. Since  $Q$ is
$Y$-periodic and convex, it is not difficult to
check that $\psi_G$ is also a minimizer of the functional
\begin{equation}\label{eq:9}
  \fs W^{1,2}_\per(m_kY;\Rrv n)\ni \psi\mapsto
  \int\limits_{m_kY}Q(y,G+\grad\psi(y))\ud y.
\end{equation}
 Because $Q(y,\cdot)$ is a quadratic form, the inequality
\begin{equation*}
  Q(y,A)-Q(y,B)\geq 2\dprod{\tensor L(y)(A-B)}{B}
\end{equation*}
is valid for all $A,B\in\Rrm n$ and almost every $y$. We apply this
inequality with
\begin{equation*}
  A=\chi_k(y)(H_k+\grad\psi_k(y))\mtext{ and }B=G+\grad\psi_G(y).
\end{equation*}
Now integration over $m_kY$ leads to
\begin{multline*}
 \frac{1}{2\,m_k^n}\int\limits_{m_kY}Q(y,\chi_k(H_k+\grad\psi_k))-Q(y,G+\grad\psi_G)\ud
  y\\
  \geq  \frac{1}{m_k^n}\int\limits_{m_kY}\dprod{\tensor
    L(y)\Big[(\chi_k(H_k+\grad\psi_k)-(G+\grad\psi_G)\Big]}{G+\grad\psi_G}\ud y.
\end{multline*}
The right hand side can be rewritten as
\begin{align*}
  &\frac{1}{m_k^n}\int\limits_{m_kY}\dprod{\tensor
    L(y)(H_k{-}G)}{G{+}\grad\psi_G}\ud
  y\\
  +&\frac{1}{m_k^n}\int\limits_{m_kY}\dprod{\tensor
    L(y)(\grad\psi_k{-}\grad\psi_G)}{G{+}\grad\psi_G}\ud
  y\\
  -&\frac{1}{m_k^n}\int\limits_{m_kY}\dprod{\tensor
    L(y)(1-\chi_k)(H_k+\grad\psi_k)}{G+\grad\psi_G}\ud
  y\\
  =:&I_k\super 1+I_k\super2+I_k\super3.
\end{align*}
In the following we prove that all three integrals vanish as
$k\rightarrow\infty$. We start with the first integral $I\super1_k$. Due to the
$Y$-periodicity of $\tensor L$ and $\psi_G$, we have
\begin{equation*}
  I\super1_k=\int\limits_Y\dprod{\tensor
    L(y)(H_k{-}G)}{G{+}\grad\psi_G}\ud
  y
\end{equation*}
and \eqref{eq:1} implies that $I\super1_k\rightarrow 0$ as
$k\rightarrow\infty$. We consider the second integral $I\super 2_k$.
It is easy to check that $I\super2_k$ is exactly the Euler Lagrange
equation of the quadratic minimization problem
associated to the functional \eqref{eq:9}. Because $\psi_G$ is a
minimizer and the map $y\mapsto \psi_k-\psi_G$ an admissible test
function, we have $I\super2_k=0$ for all $k\in\Nn$.

Finally, we consider the third integral $I\super3_k$. By applying the
Cauchy-Schwarz- and H\"older-inequality, we find that
\begin{equation}\label{eq:10}
  \abs{I\super3_k}^2\leq c^\prime
  \left(\frac{1}{m_k^n}\int\limits_{m_kY}\abs{H_k+\grad\psi_k}^2\ud y\right)\left(\frac{1}{m_k^n}\int\limits_{m_kY}\abs{(1-\chi_k)(G+\grad\psi_G)}^2\ud y\right)
\end{equation}
where $c^\prime$ is a positive constant independent of $k$. In virtue
of \eqref{eq:4}, it is sufficient to prove that the second integral
vanishes as $k\rightarrow \infty$. By construction the multi-cell $m_kY$ is the
disjoint union of the $m_k^n$ translated cells $Y+\xi$ with $\xi\in
Z_k:=m_kY\cap\Zz^n$. Consequently, we can rewrite the second integral
in \eqref{eq:10} according to 
\begin{multline*}
\frac{1}{m_k^n}\int\limits_{m_kY}\abs{(1-\chi_k)(G+\grad\psi_G)}^2\ud
    y\\=\frac{1}{m_k^n}\sum_{\xi\in Z_k}\int\limits_Y\Big(1-\chi_k(y+\xi)\Big)\Big\vert
    G+\grad\psi_G(y+\xi)\Big\vert^2\ud y
\end{multline*}
where we used the fact that the map $y\mapsto (1-\chi_k)$ only takes values in $\{0,1\}$.
Because the map $\psi_G$ is $Y$-periodic, the right hand side
simplifies to
\begin{equation*}
\int\limits_Y\bar\chi_k(y)\abs{G+\grad\psi_G(y)}^2\ud y\mtext{with}
  \bar\chi_k(y):=\frac{1}{m_k^n}\sum_{\xi\in Z_k}\Big(\,1-\chi_k(y+\xi)\,\Big).
\end{equation*}
Recall that $\chi_k$ denotes the indicator function of the set $Y_k$
defined in \eqref{eq:26}. By definition there holds
$1\leq \abs{G_k}\abs{\grad\psi_k(y)}^2$ for a.e. $y\in m_kY\setminus Y_k$. Thus, we estimate
\begin{equation*}
  \int\limits_Y\abs{\bar\chi_k(y)}\ud y\leq
  \frac{1}{m_k^n}\int\limits_{m_kY}(1-\chi_k(y))\ud y\leq
  \frac{1}{m_k^n}\int\limits_{m_kY}\abs{G_k}\abs{\grad \psi_k(y)}^2\ud y\leq C\abs{G_k}
\end{equation*}
where $C$ denotes the constant from \eqref{eq:4}. Consequently, the
sequence $\bar\chi_k$  strongly converges to $0$ in $\fs L^1(Y)$.
 We claim that
\begin{equation}\label{eq:8}
  \bar\chi_k\abs{G+\grad\psi_G}^2\rightarrow 0\qquad\text{strongly in }L^1(Y).
\end{equation}
Since  $\bar\chi_k\rightarrow 0$ in $\fs L^1(Y)$, the left hand side
converges to $0$ in measure. On the other hand, by construction we
have $0\leq\bar\chi_k\leq 1$, and therefore the left hand side is
dominated by the map $\abs{G+\grad\psi_G(\cdot)}^2$ which belongs to
$\fs L^1(Y)$. Thus, by dominated convergence  assertion \eqref{eq:8}
follows and $I\super 3_k$ vanishes as $k\rightarrow 0$. So far we have shown that
\begin{equation*}
  \frac{1}{m_k^n}\int\limits_{m_kY}Q(y,\chi_k(H_k+\grad\psi_k))-Q(y,G+\grad\psi_G(y))\ud
  y\rightarrow 0
\end{equation*}
as $k\rightarrow \infty$. In virtue of the $Y$-periodicity and
convexity of $Q$, we have
\begin{equation*}
  Q\super 1_\hom(G)=\frac{1}{m_k^n}\int\limits_{m_kY}Q(y,G+\grad\psi_G(y))\ud
  y
\end{equation*}
and \eqref{eq:11} follows.\qed

\section{Linearization.}\label{sec:lin}
G.~Dal Maso et al. proved in \cite{DalMaso2002} that linearized
elasticity can be obtained as a $\Gamma$-limit from nonlinear,
three-dimensional elasticity. The following theorem is a variant of their result adapted to assumption (W4).
\begin{theorem}\label{thm:2}
 Let $W:\,\Omega{\times}\Rrm n\rightarrow[0,\infty)$ be a
 Carath\'eodory function and suppose that
 \begin{equation}\label{eq:50}
   \limsup\limits_{G\rightarrow 0,\atop G\neq 0}\frac{\abs{W(x,\id+G)-Q(x,G)}}{\abs{G}^2}=0,
 \end{equation}
 where $Q:\,\Omega{\times}\Rrm n\rightarrow[0,\infty)$ is a
 Carath\'eodory function quadratic in its second variable and
bounded in the sense that 
\begin{equation*}
 \forall G\in\Rrm n\,:\,\esssup_{x\in\Omega}Q(x,G)\leq c_1\abs{G}^2 
\end{equation*}
for a suitable constant $c_1>0$. We consider the functional
\begin{equation*}
 \energy E^h(g):=\left\{\begin{aligned}
                  &\frac{1}{h^2}\int\limits_\Omega W(x,\id+h\grad g(x))\ud x&&\text{if }g\in\boundaryset\\
		  &+\infty &&\text{else,}
                 \end{aligned}\right.
\end{equation*}
and assume that there exists a positive constant $c_2$ such that
 \begin{equation}
\label{eq:17}\energy E^h(g)\geq c_2\norm{g}^2_{\fs W^{1,2}(\Omega;\Rrv n)}\qquad\text{for all $h>0$ and }g\in\boundaryset.
 \end{equation}
Then the family $(\energy E_h)$ $\Gamma$-converges with respect to strong convergence in $\fs L^2(\Omega;\Rrv n)$ to the functional
\begin{equation*}
 \energy E_\lin(g):=\left\{\begin{aligned}
     &\frac{1}{h^2}\int\limits_\Omega Q(x,\grad g(x))&&\text{if }g\in\boundaryset\\
     &+\infty&& \text{else.}
   \end{aligned}\right.
\end{equation*}
\end{theorem}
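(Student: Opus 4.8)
The plan is to establish the two $\Gamma$-convergence inequalities with respect to strong convergence in $\fs L^2(\Omega;\Rrv n)$ separately, keeping in mind that $\energy E_\lin(g)=\int_\Omega Q(x,\grad g(x))\,\ud x$ for $g\in\boundaryset$ and $+\infty$ otherwise. By \eqref{eq:50} we may fix a nondecreasing modulus $\omega:(0,\infty)\to[0,\infty)$ with $\omega(\delta)\to 0$ as $\delta\downarrow 0$ and $\abs{W(x,\id+G)-Q(x,G)}\le\omega(\abs{G})\abs{G}^2$ for a.e.\ $x\in\Omega$ and all $G\neq 0$; recall also that $Q(x,\cdot)$ being quadratic means $Q(x,tG)=t^2Q(x,G)$ and $Q(x,0)=0$.

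\emph{The recovery ($\limsup$) inequality.} First let $g\in\boundaryset\cap\fs W^{1,\infty}(\Omega;\Rrv n)$ and take the constant sequence $g_h:=g$. Then $\abs{h\grad g(x)}\le h\norm{\grad g}_{\fs L^\infty}\to 0$ uniformly, so for $h$ small $\abs{\tfrac1{h^2}W(x,\id+h\grad g(x))-Q(x,\grad g(x))}\le\omega\!\big(h\norm{\grad g}_{\fs L^\infty}\big)\abs{\grad g(x)}^2$ a.e., and integrating over $\Omega$ gives $\energy E^h(g)\to\int_\Omega Q(x,\grad g(x))\,\ud x=\energy E_\lin(g)$. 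For a general $g\in\boundaryset$ I invoke the assumed strong density of $\boundaryset\cap\fs W^{1,\infty}$ in $\boundaryset$: choosing $g_j\to g$ in $\fs W^{1,2}$ with $g_j\in\boundaryset\cap\fs W^{1,\infty}$, one has $\energy E_\lin(g_j)\to\energy E_\lin(g)$ since $G\mapsto\int_\Omega Q(x,G(x))\,\ud x$ is a bounded quadratic, hence continuous, functional on $\fs L^2$ (using $0\le Q(x,G)\le c_1\abs{G}^2$ and polarization). As the $\Gamma$-upper-limit functional is lower semicontinuous for strong $\fs L^2$-convergence, $\Gamma\text{-}\limsup_h\energy E^h(g)\le\liminf_j\Gamma\text{-}\limsup_h\energy E^h(g_j)\le\liminf_j\energy E_\lin(g_j)=\energy E_\lin(g)$; and if $g\notin\boundaryset$ there is nothing to prove.

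\emph{The $\liminf$ inequality.} Let $g_h\to g$ in $\fs L^2(\Omega;\Rrv n)$; we may assume $\liminf_h\energy E^h(g_h)<\infty$ and, passing to a subsequence that realizes the $\liminf$, that $\sup_h\energy E^h(g_h)<\infty$. Then each $g_h\in\boundaryset$ and the coercivity hypothesis \eqref{eq:17} bounds $(g_h)$ in $\fs W^{1,2}(\Omega;\Rrv n)$; hence $g_h\wconv g$ weakly in $\fs W^{1,2}$ (the weak limit is $g$ because $g_h\to g$ strongly in $\fs L^2$) and $g\in\boundaryset$. The point is that $\abs{\grad g_h}^2$ could concentrate on the region where $h\abs{\grad g_h}$ is not small, so \eqref{eq:50} cannot be applied verbatim. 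I circumvent this by splitting off the good set $A_h:=\{x\in\Omega:\abs{\grad g_h(x)}\le h^{-1/2}\}$, with indicator $\chi_h$; Chebyshev together with the $\fs W^{1,2}$-bound gives $\abs{\Omega\setminus A_h}\le h\int_\Omega\abs{\grad g_h}^2\,\ud x\to 0$. On $A_h$ one has $\abs{h\grad g_h}\le h^{1/2}\to 0$, so, using $W\ge 0$, the expansion, $\chi_h\in\{0,1\}$ and the quadratic homogeneity of $Q$, one obtains $\energy E^h(g_h)\ge\int_\Omega Q(x,\chi_h\grad g_h)\,\ud x-\omega(h^{1/2})\int_\Omega\abs{\grad g_h}^2\,\ud x$, where the last term vanishes as $h\to 0$. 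Next I claim $\chi_h\grad g_h\wconv\grad g$ weakly in $\fs L^2(\Omega;\Rrm n)$: for any $\phi\in\fs L^2$, $\abs{\int_\Omega(1-\chi_h)\grad g_h\cdot\phi\,\ud x}\le\norm{\grad g_h}_{\fs L^2}\norm{(1-\chi_h)\phi}_{\fs L^2}\to 0$ because $\abs{\Omega\setminus A_h}\to 0$, so $(1-\chi_h)\grad g_h\wconv 0$ and therefore $\chi_h\grad g_h\wconv\grad g$. Finally, $\varphi\mapsto\int_\Omega Q(x,\varphi(x))\,\ud x$ is convex, quadratic and bounded, hence sequentially weakly lower semicontinuous on $\fs L^2$, and we conclude $\liminf_h\energy E^h(g_h)\ge\liminf_h\int_\Omega Q(x,\chi_h\grad g_h)\,\ud x\ge\int_\Omega Q(x,\grad g(x))\,\ud x=\energy E_\lin(g)$.

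\emph{Where the work is.} The only step that is not routine is the possible concentration of $\abs{\grad g_h}^2$ on the bad set $\Omega\setminus A_h$: it forces one to pass to the truncated gradients $\chi_h\grad g_h$ before applying lower semicontinuity and to check that this truncation does not alter the weak $\fs L^2$-limit. Everything else is a direct consequence of the expansion hypothesis \eqref{eq:50}, the coercivity estimate \eqref{eq:17}, and the classical weak lower semicontinuity of convex quadratic functionals on $\fs L^2$. This is precisely the mechanism used by Dal~Maso, Negri and Percivale in \cite{DalMaso2002}, here somewhat streamlined since \eqref{eq:50} already packages the remainder control that their argument produces by hand.
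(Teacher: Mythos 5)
Your proposal is correct and follows essentially the same route as the paper: the lower bound uses the identical truncation to the good set $\{|\grad g_h|\le h^{-1/2}\}$, the Chebyshev bound on its complement, the observation that $\chi_h\grad g_h\wconv\grad g$ in $\fs L^2$, and weak lower semicontinuity of the quadratic functional; the recovery part likewise rests on the density of $\boundaryset\cap\fs W^{1,\infty}$ and strong continuity of $\int_\Omega Q(x,\grad g)\ud x$. The only cosmetic difference is that the paper builds one recovery sequence with $\esssup|\grad g_h|\le h^{-1/2}$ directly, whereas you take constant sequences for Lipschitz targets and then invoke lower semicontinuity of the $\Gamma$-$\limsup$; both are standard and equivalent here.
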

\begin{remark}
  Condition \eqref{eq:17} implies equi-coercivity of the functionals
  $(\mathcal E^h)$ in  $\fs L^2(\Omega;\Rrv n)$. In virtue of
  Proposition~\ref{prop:2}, we see that the combination of the
  non-degeneracy of $W$ and the Dirichlet boundary condition is a
  sufficient condition for \eqref{eq:17}.
\end{remark}
\begin{proof}
Since  the strong topology of $\fs L^2(\Omega;\Rrv n)$ is metrizable,
we can use the sequential characterization of $\Gamma$-convergence.
Thus, we have to prove the following:
\begin{enumerate}
\item (lower bound) for every $g\in\fs L^2(\Omega;\Rrv n)$ and every sequence
  $(g_h)$ converging to $g$ in $\fs L^2(\Omega;\Rrv n)$ it is
  \begin{equation*}
    \liminf\limits_{h\rightarrow 0}\mathcal E^h(g_h)\geq\mathcal E_\lin(g).
  \end{equation*}
\item (recovery sequence) for every $g\in\fs L^2(\Omega;\Rrv n)$ there exists a sequence
  $(g_h)$ converging to $g$ in $\fs L^2(\Omega;\Rrv n)$ such that
  \begin{equation*}
    \lim\limits_{h\rightarrow 0}\mathcal E^h(g_h)=\mathcal E_\lin(g).
  \end{equation*}
\end{enumerate}
\step 1 (Recovery sequence). We only have to consider the case
$g\in\boundaryset$. By assumption the inclusion $\left(\,\fs W^{1,\infty}(\Omega;\Rrv
n)\cap\boundaryset\,\right)\subset\boundaryset$ is dense; thus, there exists a
sequence $(g_h)$ in $\fs W^{1,\infty}(\Omega;\Rrv n)\cap\boundaryset$
that strongly converges to $g$ and that satisfies
\begin{equation}\label{eq:15}
  \esssup\limits_{x\in\Omega}\abs{\grad g_h(x)}\leq
  \frac{1}{\sqrt{h}}\qquad\text{for all }h>0.
\end{equation}
Thus, by \eqref{eq:50} we get
\begin{equation}\label{eq:16}
 \frac{1}{h^2}\int\limits_\Omega W(x,\id+h\grad g_h(x))\ud
 x=\int\limits_\Omega Q(y,\grad g_h(x))+\restterm(x,h\grad g_h(x))\ud x
\end{equation}
with
\begin{equation*}
 \lim\limits_{h\rightarrow 0}\abs{\int\limits_\Omega\restterm(x,h\grad g_h(x))\ud x}=0.
\end{equation*}
Since the quadratic integral functional in \eqref{eq:16} is
continuous with respect to strong convergence and $g_h\rightarrow g$
strongly in $\fs W^{1,2}(\Omega;\Rrv n)$, we see that
\begin{equation*}
  \lim\limits_{h\rightarrow 0}\mathcal E^h(g_h)=\mathcal E_\lin(g).
\end{equation*}

\step 2 (Lower bound). Let $g\in\fs L^2(\Omega;\Rrv n)$ and $(g_h)$ a
sequence converging to $g$ in $\fs L^2(\Omega;\Rrv n)$. We show that
\begin{equation}\label{eq:2}
 \liminf\limits_{h\rightarrow 0}\mathcal E^h(g_h)\geq \mathcal E_\lin(g).
\end{equation}
As usual it is sufficient to consider the case where
\begin{equation*}
 \liminf\limits_{h\rightarrow 0}\mathcal E^h(g_h)=
 \limsup\limits_{h\rightarrow 0}\mathcal E^h(g_h) <+\infty.
\end{equation*}
In this case we can assume without loss of generality that each $g_h$
belongs to the set $\boundaryset$. The
equi-coercivity condition on $(\mathcal E^h)$ (see \eqref{eq:17}) implies that $(g_h)$ is
relatively compact with respect to weak convergence in $\fs
W^{1,2}(\Omega;\Rrv n)$. As a consequence we can pass to a  subsequence
(that we do not relabel) such that 
\begin{equation*}
 g_h\wconv g\qquad\text{weakly in }\fs W^{1,2}(\Omega;\Rrv n).
\end{equation*}
Because $\boundaryset$ is a (seq.) weakly closed subset of $\fs
W^{1,2}(\Omega;\Rrv n)$, we find that $g\in\boundaryset$.

We define the set
\begin{equation*}
 \Omega_h:=\{\,x\in\Omega\,:\,\abs{\grad g_h(x)}\leq h^{-1/2}\,\}
\end{equation*}
and denote the corresponding indicator function by $\chi_h$. Since
$(\grad g_h)$ is a bounded sequence in $\fs L^2(\Omega;\Rrm n)$, we can estimate the measure of $\Omega\setminus\Omega_h$ according to 
\begin{equation}\label{pf:thm:2:1}
 \abs{\Omega\setminus\Omega_h}\leq h\int\limits_{\Omega\setminus\Omega_h}\abs{\grad g_h(x)}^2\ud x\leq h C
\end{equation}
for a suitable constant $C$. Now we establish a lower estimate by
utilizing the expansion of $W$ for points $x\in\Omega_h$ and the non-negativity of $W$ for points $x\in\Omega\setminus\Omega_h$:
\begin{equation*}
 \int\limits_\Omega W(x,\id+h\grad g_h(x))\ud x\geq \int\limits_{\Omega_h}Q(x,\grad g_h(x))+\restterm(x,h\grad g_h(x))\ud x.
\end{equation*}
Since $\abs{h\grad g_h(x)}\leq \sqrt h$ for all $x\in\Omega_h$ the
properties of the quadratic expansion of $W$ lead to 
\begin{equation*}
 \liminf\limits_{h\rightarrow 0}\int\limits_\Omega W(x,\id+h\grad
 g_h(x))\ud x\geq \liminf\limits_{h\rightarrow
   0}\int\limits_{\Omega}Q(x,\chi_h(x)\grad g_h(x))\ud x.
\end{equation*}
Since $\grad g_h\wconv \grad g$ weakly in $\fs L^2(\Omega;\Rrm n)$
and $\chi_h\rightarrow 1$ boundedly in measure (due to
\eqref{pf:thm:2:1}), we find that
\begin{equation*}
  \chi_h\grad g_h\wconv \grad g\qquad\text{weakly in }\fs
  L^2(\Omega;\Rrm n)
\end{equation*}
and the sequentially weak lower semi-continuity of the quadratic integral
functional leads to the desired lower bound estimate \eqref{eq:2}.
 \end{proof}

\section{Homogenization and linearization commute}\label{sec:hlc}
In this section we prove that the diagram \eqref{diagram} in Theorem
\ref{thm:4} commutes. We have to show that the
$\Gamma$-convergence statements
\begin{alignat*}{6}
 &(1)\qquad\energy I^{\varepsilon,h}&&\gammaconv \energy I_\lin^\varepsilon &\qquad\qquad&(2)&\qquad&\energy I^{\varepsilon,h}&&\gammaconv \energy I_\hom^h\\
 &(3)\qquad\energy I^{\varepsilon}_\lin&&\gammaconv \energy I^0 &\qquad\qquad&(4)&\qquad&\energy I^{h}_\hom&&\gammaconv \energy I^0
\end{alignat*}
hold w.r.t. strong convergence in $\fs L^2(\Omega;\Rrv n)$.

First, we discuss the $\Gamma$-limits corresponding to linearization,
i.e. (1) and (4). To this end we recall that Proposition~\ref{prop:2}
 implies that
\begin{equation}\label{eq:coercive}
\min\{\,\energy I^{\varepsilon,h}(g),\,\energy I^{h}_\hom(g)\,\}\geq
c\norm{g}^2_{\fs W^{1,2}(\Omega;\Rrv n)}
\end{equation}
for all $g\in\boundaryset$ and $h,\varepsilon>0$. As a consequence we
see that convergence (1) directly follows by applying
Theorem~\ref{thm:2} to the sequence $(\mathcal I^{\varepsilon,h})_h$.

For the justification of convergence (4), we first apply Theorem~\ref{thm:1} and see that
$W\mc$ admits a quadratic Taylor expansion. It is well-known that
the homogenization $Q_\hom\super 1$ of a quadratic integrand $Q\in\mathcal Q$ is a
quadratic form over $\Rrv n$, and therefore satisfies the condition of Theorem~\ref{thm:2}. Moreover,
Proposition~\ref{prop:2} proves that the sequence $(\energy I^h_\hom)$
is equi-coercive and $(4)$ follows by applying Theorem~\ref{thm:2}.

The $\Gamma$-limits corresponding to homogenization, i.e. (2) and (3),
follow by standard results. More precisely, convergence (3) is a
classical result in convex homogenization (cf. e.g. \cite{Francfort1986},
\cite{Olejnik1984} and note that these results can be restated in the
language of $\Gamma$-convergence). The non-convex homogenization, convergence (2), can be justified by the following result:

\begin{theorem}[A.~Braides \cite{Braides1985}, S.~M\"uller
  \cite{Mueller1987}]\label{thm:homog}
Let   $\Omega$ be a Lipschitz domain in $\Rrv n$. Suppose that $W:\Rrv
n{\times}\Rrm n\rightarrow \Rr$ is a
Carath\'eodory function that is $Y$-periodic in its first variable and
that satisfies the growth and  coercivity condition (W1) with
$p\in(1,\infty)$. 
We consider the functionals
\begin{equation*}
 \energy E^\varepsilon(u):=\left\{\begin{aligned}
                    &\int\limits_\Omega W(x/\varepsilon,\grad u(x))\ud x&\qquad&\text{if }u\in\fs W^{1,p}(\Omega;\Rrv n)\\
		    &+\infty&&\text{else,}
                   \end{aligned}\right.
\end{equation*}
and
\begin{equation*}
 \energy E_\hom(u):=\left\{\begin{aligned}
                    &\int_\Omega W\mc(\grad u(x))\ud x&\qquad&\text{if }u\in\fs W^{1,p}(\Omega;\Rrv n)\\
		    &+\infty&&\text{else.}
                   \end{aligned}\right.
\end{equation*}
Then $(\mathcal E^\varepsilon)$ $\Gamma$-converges to $\mathcal
E_\hom$ with respect to strong convergence in $\fs L^p(\Omega;\Rrv n)$
as $\varepsilon\rightarrow 0$.
\end{theorem}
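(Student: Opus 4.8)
The plan is to establish the two $\Gamma$-convergence inequalities through the sequential characterization — legitimate since the strong topology of $\fs L^p(\Omega;\Rrv n)$ is metrizable — and to organize the identification of the limit around the \emph{Dirichlet homogenization formula}: with $Q$ the open unit cube,
\begin{equation}\label{plan:dir}
  W_{\mathrm D}(F):=\lim_{\varepsilon\to0}\inf\Big\{\int_Q W(x/\varepsilon,F+\grad\varphi(x))\ud x\ :\ \varphi\in\fs W^{1,p}_0(Q;\Rrv n)\Big\},
\end{equation}
the limit existing because the right-hand infimum, as a set function of the underlying cube, is almost subadditive. I would first dispatch the soft part. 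By the coercivity in (W1) every sequence of equibounded energy is bounded in $\fs W^{1,p}(\Omega;\Rrv n)$, so for the lower-bound inequality one may assume $u_\varepsilon\wconv u$ in $\fs W^{1,p}$; and by the compactness of $\Gamma$-convergence one passes to a subsequence along which $(\energy E^\varepsilon)$ $\Gamma$-converges to some $\energy E^\ast$, which, by a De~Giorgi--Letta measure argument and a Buttazzo--Dal~Maso-type integral-representation theorem (using (W1)), has the form $\energy E^\ast(u)=\int_\Omega f(\grad u)\ud x$ for a quasiconvex $f$ satisfying (W1); translation invariance of the limit is what makes $f$ independent of $x$. It then remains only to prove $f=W\mc$ pointwise, and, this being independent of the subsequence, the whole family $(\energy E^\varepsilon)$ converges — this is the route of \cite{Mueller1987,Braides1998}.

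The identification $f=W_{\mathrm D}$ rests on the convergence of minimum values under $\Gamma$-convergence together with equicoercivity. I would minimize both $\energy E^\ast$ and $\energy E^\varepsilon$ over the class $\{u\in\fs W^{1,p}(Q;\Rrv n):u-Fx\in\fs W^{1,p}_0(Q;\Rrv n)\}$; $\Gamma$-convergence is stable under this affine Dirichlet constraint because boundary values can be corrected on a thin layer at a vanishing $p$-growth cost (the De~Giorgi fundamental estimate). On the limit side the minimum equals $f(F)$, since $f$ is quasiconvex and affine maps are therefore optimal; on the $\varepsilon$ side it is exactly the infimum in \eqref{plan:dir}. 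Passing to the limit identifies $f(F)=W_{\mathrm D}(F)$.

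Next I would prove $W_{\mathrm D}\le W\mc$. Given $k\in\Nn$, pick $\varphi\in\fs W^{1,p}_\per(kY;\Rrv n)$ almost optimal for the $k$-cell formula. The oscillating map $x\mapsto\varepsilon\varphi(x/\varepsilon)$, cut off near $\partial Q$ by a standard boundary-layer argument (whose error vanishes by $p$-growth), is admissible in \eqref{plan:dir}, and by the Riemann--Lebesgue lemma the corresponding energy converges as $\varepsilon\to0$ to $k^{-n}\int_{kY}W(y,F+\grad\varphi(y))\ud y$. Letting $\varepsilon\to0$ and then $k\to\infty$ gives $W_{\mathrm D}(F)\le W\mc(F)$.

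The reverse inequality $W_{\mathrm D}(F)\ge W\mc(F)$ is the heart of the matter, and the point at which a single periodicity cell does not suffice. Let $\varphi_\varepsilon\in\fs W^{1,p}_0(Q;\Rrv n)$ be almost optimal in \eqref{plan:dir} at scale $\varepsilon$; by coercivity $\grad\varphi_\varepsilon$ stays bounded in $\fs L^p(Q;\Rrm n)$. Put $k=k(\varepsilon):=\lceil 1/\varepsilon\rceil$, so $Q/\varepsilon\subset kY$, and let $\widetilde\varphi_\varepsilon(y):=\varepsilon^{-1}\varphi_\varepsilon(\varepsilon y)$ on $Q/\varepsilon$, extended by $0$ to $kY$ (admissible because $\varphi_\varepsilon$ has vanishing trace on $\partial Q$) and then $kY$-periodically; this is a competitor in the $k$-cell formula, and a change of variables gives
\begin{multline*}
  \frac1{k^n}\int_{kY}W(y,F+\grad\widetilde\varphi_\varepsilon(y))\ud y
  =\frac{\varepsilon^{-n}}{k^n}\int_Q W(x/\varepsilon,F+\grad\varphi_\varepsilon(x))\ud x\\
  +\frac1{k^n}\int_{kY\setminus(Q/\varepsilon)}W(y,F)\ud y,
\end{multline*}
where $\varepsilon^{-n}/k^n\to1$ and the last term vanishes, since $W(\cdot,F)$ is bounded by (W1) while $\abs{kY\setminus(Q/\varepsilon)}/k^n\to0$. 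Hence, as $W\mc(F)$ is by definition the infimum of the $k$-cell formulas, $W\mc(F)\le W_{\mathrm D}(F)$. Combining the four inequalities yields $f=W_{\mathrm D}=W\mc$, and by uniqueness of the $\Gamma$-limit $(\energy E^\varepsilon)$ $\Gamma$-converges to $\energy E_\hom$. The real obstacle is exactly this last step: a Dirichlet competitor is not periodic, so turning it into one creates a boundary layer that is only a vanishing fraction of the cell when the cell has size $\sim 1/\varepsilon$; it is precisely this forced growth of the period with $\varepsilon$ that produces the multi-cell rather than the one-cell formula, and one must verify, via $p$-growth, that the layer contributes negligible energy.
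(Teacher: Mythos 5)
The paper does not prove Theorem~\ref{thm:homog}: it is quoted as a known result and attributed to Braides and M\"uller, so there is no internal proof to compare against. Your argument is a correct reconstruction of the classical proof from those references --- compactness and integral representation of the $\Gamma$-limit, identification of the density via the asymptotic Dirichlet (cube) formula using convergence of minima under affine boundary data, and the two comparisons between that formula and $W\mc$ --- and you correctly isolate the essential point, namely that periodizing a Dirichlet competitor forces the period to grow like $1/\varepsilon$, which is exactly why the multi-cell rather than the one-cell formula arises.
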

The previous theorem can be adapted to the boundary conditions
considered in diagram \eqref{diagram}. More precisely, one can show that
\begin{quote}
 for every $u\in\fs W^{1,p}(\Omega;\Rrv n)$ there is a sequence $(u_\varepsilon)$ such that $u_\varepsilon\conv u$ strongly in $\fs L^p(\Omega;\Rrv n)$, $u_\varepsilon=u$ on $\partial\Omega$  and $\lim\limits_{\varepsilon\rightarrow 0}\energy E^\varepsilon(u_\varepsilon)=\energy E_\hom(u)$.
\end{quote}
This can be seen by a standard gluing argument (cf. e.g.
\cite{Mueller1987,Braides1998}). In particular, we can apply this
modification in the case $p\geq 2$ and to limiting deformations
$u\in\fs W^{1,2}(\Omega;\Rrv n)$ satisfying $u(x)=x$ on $\gamma$. Hence, the theorem implies that $(\mathcal
I^{\varepsilon,h})_\varepsilon$ $\Gamma$-converges to $\mathcal
I^h_\hom$ with respect to the strong topology in $\fs L^2(\Omega;\Rrv
n)$. 

In conclusion we see that both paths (1), (3) and (2), (4) lead to the
same limiting functional $\energy I^0$ and the diagram commutes.

\section{Failure of commutativity}\label{sec:non-comm-expans-3}
In this section we present two examples illustrating that
homogenization and linearization cannot be interchanged in general. In the first
example we consider energy densities of class $\mathcal
W(a,p)$ and show that the commutativity might fail if the expansion
is not centered at identity.
Secondly, we elaborate on the importance of condition
(W2) and (W3), which imply that the material has a single energy well
at $\SO n$ with quadratic growth. As an example we discuss  a perforated composite with a
prestressed component and give an indirect argument that
homogenization and linearization do not commute at identity
--- although the homogenized energy density is stress free at identity
and satisfies  $W\mc(\id+G)=O(|G|^2)$.

\subsection{Counterexample I: Non-commutativity for expansions away
  from $\SO n$}

In this section we argue that in general the commutability of  homogenization and
linearization does not hold for expansions centered at $F\not\in\SO n$. Roughly
speaking, the reason is the following: If $F\not\in\SO n$ we may find
a nonlinear material $W\in\mathcal W(a,p)$ with $W\mc(F)<W\super
1_\hom(F)$, which means that long-wave
oscillations with period $k\varepsilon$, $k\geq 2$, lead to limiting energies that are lower than those
obtained by ``one-cell'' oscillations with period $\varepsilon$. On the other side, the
linearized energy and its homogenization is always quadratic, and due
to the non-degeneracy of the
material stable in a neighborhood of $\SO n$. Thus, low energy states related to long-wave
oscillations are ignored by the homogenization of the linearized energy;
and therefore homogenization and linearization do not commute for
$F\not\in\SO n$ in general.

In the rest of this section we present an implicit, but rigorous
formulation of the idea above by varying an example
introduced by the first author in \cite{Mueller1987}.
Let $W_0:\,\Rrm
2\rightarrow[0,\infty)$ be a frame indifferent integrand of class
$C^3$. We suppose that $W_0$ satisfies the local Lipschitz- and growth
condition $(W1)$ with $p=2$, and
\begin{equation}\label{eq:21}
    \frac{1}{c}\dist^2(F,\SO 2)\leq W_0(F)\leq c\dist^2(F,\SO
    2)\qquad\text{for all }F\in\Rrm 2.
\end{equation}
We consider the periodic stored energy function
\begin{equation*}
  W(y,F):=\big(\,\chi(y)+\alpha(1-\chi(y))\,\big)W_0(F),
\end{equation*}
where $\alpha$ is a small positive parameter and $\chi$ denotes the indicator function of the periodic pattern
\begin{equation*}
  P:=\Big\{ (x_1,x_2)\in\Rrv 2\,:\,\exists k\in\Zz\,\text{ such that }\,x_2\in[k,k+\tfrac{1}{2})\,\Big\}  
\end{equation*}
Thus, $W$ describes a composite material with a layered microstructure consisting
of a stiff component on $P$ and a soft matrix with stiffness
$\alpha$. 

As a main result we prove a necessary condition for the property that
linearization and homogenization commute for all expansions in an neighborhood of $\id$:
\begin{proposition}\label{prop:1}
Let $U$ be an open neighborhood of $\id$. Suppose that for each expansion at $F\in U$ homogenization and linearization commute in the
following sense: There exists $\sigma_F\in\Rrm 2$ such that
\begin{equation*}
\forall G\in\Rrm 2\,:\qquad  W\mc(F+G)=W\mc(F)+\dprod{\sigma_F}{G} + Q^F_\hom(G)+o(\abs{G}^2)
\end{equation*}
where $Q^F_\hom$ denotes the homogenization of the quadratic integrand
\begin{equation*}
  Q^F(y,G):=\frac{1}{2}D^2W(y,F)(G,G).
\end{equation*}
Then 
\begin{equation*}
  \alpha\geq c(W_0,U)
\end{equation*}
for a positive constant $c(W_0,U)>0$ that only depends on
$W_0$ and the neighborhood $U$.
\end{proposition}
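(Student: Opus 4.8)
The plan is to derive a contradiction with the assumed commutativity whenever $\alpha$ is too small, by exhibiting an $F$ near $\id$ for which the multi-cell formula $W\mc(F)$ strictly undershoots any quadratic-plus-affine model based on the second variation $Q^F$. The starting point is the layered structure: because $W(y,F)=(\chi(y)+\alpha(1-\chi(y)))W_0(F)$ with $W_0$ frame-indifferent of class $C^3$ and satisfying \eqref{eq:21}, the homogenized integrand $W\mc$ and the homogenization $Q^F_\hom$ of $Q^F(y,\cdot)=\tfrac12 D^2W(y,F)(\cdot,\cdot)$ are both computable, at least qualitatively, via the classical lamination formulas for layered media (only the dependence on $x_2$ matters, so one optimizes the $x_2$-derivative of the corrector across the two phases). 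In particular $Q^F_\hom$ is a fixed positive semidefinite quadratic form, positive definite on $\sym$, with ellipticity constants controlled from above and below in terms of $W_0$ and uniformly for $F$ in a neighborhood $U$ of $\id$ (using the $C^3$ bound on $W_0$).

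Next I would invoke the structural fact, already exploited by the first author in \cite{Mueller1987}, that for frame-indifferent layered energies of this type there exist deformation gradients $F\not\in\SO 2$ near $\id$ at which the multi-cell formula is \emph{strictly} below the one-cell formula; more quantitatively, one can construct for such an $F$ a large-period corrector $\psi_k\in\fs W^{1,2}_\per(kY;\Rrv 2)$ ---built from a fine mixture of rotated states that the stiff phase can accommodate at zero cost while the soft phase pays only $O(\alpha)$--- realizing
\begin{equation*}
  W\mc(F)\leq C\,\alpha\,\delta^2
\end{equation*}
for a definite geometric amount $\delta=\dist(F,\SO 2)>0$ that we are free to fix, while the relaxation of the quadratic surrogate satisfies the lower bound $Q^F_\hom(F-\id)+\dprod{\sigma_F}{F-\id}+W\mc(\id)\geq \tfrac1C\delta^2 - C\delta\,(\text{lower order})$, by Remark~\ref{rem:1} and Korn. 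Contrasting these two estimates: if the commutativity expansion held at $F$, then evaluating it ``at $G=\id-F$'' (more carefully, along $G_t=t(\id-F)$ and using that the quadratic model is exact on the quadratic surrogate) would force $W\mc(F)$ to agree up to $o(\delta^2)$ with the quadratic surrogate value, which is bounded below by $\tfrac1C\delta^2$; but we have just shown $W\mc(F)\leq C\alpha\delta^2$. Hence $\tfrac1C\delta^2\leq C\alpha\delta^2 + o(\delta^2)$, and since $\delta$ is fixed this yields $\alpha\geq c(W_0,U)>0$.

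The main obstacle is making the second ingredient --- the strict gap $W\mc(F)<W\super1_\hom(F)$ with the quantitative bound $W\mc(F)=O(\alpha)$ --- genuinely rigorous and uniform in a neighborhood of $\id$: one must produce an explicit multi-cell test function exploiting that the stiff layers, being frame indifferent, cost nothing for rotation-valued gradients, so that a laminate within a laminate lets the average gradient $F$ be achieved while the integrand is supported (at nontrivial cost) only on the soft phase. I would handle this by an explicit two-scale lamination construction: first choose $F$ of the form $\id + \delta\, a\otimes e_1$ with $a$ chosen so that $F$ lies on the boundary of the set of gradients the stiff phase accepts via rotations plus a small $e_1$-laminate, then insert a second, finer $e_1$-laminate inside each soft sublayer to relax the residual strain, bounding the total energy by $C\alpha\delta^2$ via \eqref{eq:21}. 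The remaining steps --- the lamination formula for $Q^F_\hom$, the Korn-type lower bound, and the bookkeeping of the affine term $\dprod{\sigma_F}{\cdot}$, which drops out at quadratic order --- are routine and uniform over $U$ by the $C^3$ control on $W_0$. Passing $\delta\to$ (a fixed small value) rather than to $0$ is what converts the asymptotic commutativity hypothesis into the stated lower bound on $\alpha$.
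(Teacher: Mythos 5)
Your two quantitative ingredients are the right ones and essentially match the paper's Lemma~\ref{lem:3}: (i) a buckling/multi-scale construction in the stiff layers showing $W\mc(F_\delta)\leq C\alpha$ for compressions $F_\delta=\id-\delta(e_1{\otimes}e_1)$ with $\delta\leq\delta_0$, and (ii) a Korn-type stability bound $Q^\delta_\hom(e_1{\otimes}e_1)\geq 1/c_0$ uniformly for small $\delta$. (Minor caveat: in the stiff phase a pure laminate of rotations cannot produce a compression along $e_1$ as a simple laminate; the paper instead bends each stiff rod along an arclength-parametrized curve, which is the correct realization of the idea you describe.)

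However, there is a genuine gap in the step where you contrast the two bounds. You propose to ``evaluate the expansion at $G=\id-F$''. The commutativity hypothesis is an asymptotic statement: the remainder is $o(\abs{G}^2)$ as $G\to 0$ \emph{at the fixed center $F$}. It gives no control whatsoever at the finite matrix $G=\id-F$ of size $\delta$; and your parenthetical repair ``along $G_t=t(\id-F)$'' only recovers information about the second derivative of $t\mapsto W\mc(F+t(\id-F))$ at $t=0$, i.e.\ pointwise data at $F$, not a comparison between $W\mc(F)$ and $W\mc(\id)$. Relatedly, the linear term $\dprod{\sigma_F}{F-\id}$ is of order $\delta$ and does not ``drop out at quadratic order'' once $\delta$ is fixed; it could be negative and dominate. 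The paper closes exactly this gap by a convexity argument: $W\mc$ is quasiconvex, hence rank-one convex, so $f(\delta):=W\mc(F_\delta)$ is convex on $[0,\delta_0]$; Aleksandrov's theorem gives a.e.\ second-order expansions, Theorem~\ref{thm:1} gives $f'(0)=0$, and the hypothesis (applied at \emph{almost every} point of the segment, not just one) gives $f''(\delta)=Q^\delta_\hom(e_1{\otimes}e_1)\geq c_0^{-1}$ a.e.; integrating twice yields $f(\delta)\geq\delta^2/(2c_0)$, which against $f(\delta)\leq c_0\alpha$ gives $\alpha\geq\delta_0^2/(2c_0^2)$. Without this global convexity input, a.e.\ pointwise Taylor expansions cannot be summed into the finite-distance lower bound your argument needs, so as written the contradiction does not go through.
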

\begin{remark}
  \label{sec:non-comm-expans-2}
  Theorem \ref{thm:1} implies that the expansion holds for $F=\id$
  with $\sigma_F=0$.  
\end{remark}
\begin{remark}\label{sec:non-comm-expans-1}
  The result can be read as follows: For any open neighborhood $U$ of $\id$ we
  can find a material (e.g. by choosing $\alpha$ sufficiently small)
  such that homogenization and linearization do not commute for all
  expansions centered in $U$.  
\end{remark}

The result follows from a slightly stronger statement that we prove below. In particular,
it is sufficient to study the response of the homogenized material to compressions of the form
\begin{equation*}
  x\mapsto F_\delta x,\qquad F_\delta:=\id-\delta(e_1{\otimes}e_1),\qquad \delta\geq 0
\end{equation*}
For future reference we set
\begin{equation*}
  Q^\delta(y,G):=\frac{1}{2}D^2W(y,F_\delta)(G,G)
\end{equation*}
and let $Q^\delta_\hom$ denote its homogenization.

In \cite{Mueller1987} it was shown that if $\delta>0$ and $\alpha$ is
sufficiently small, the stiff part of the material, which resembles an
ensemble of aligned rods, starts to buckle and allows to bound the
limiting energy linearly in  $\alpha$. In contrast to this, we prove that the
non-degeneracy of $W$ implies that  $Q^\delta_\hom$ is stable for
small $\delta$. Both observations can be quantified as follows:
\begin{lemma}\label{lem:3}
  There exist positive constants $\delta_0$ and $c_0$ (depending only
  on $W_0$) with the
  following properties:
  For all $0\leq \delta\leq\delta_0$ and $\alpha>0$ we have
  \begin{align}\label{eq:25}
    Q^\delta_{\hom}(e_1{\otimes}e_1)&\geq \frac{1}{c_0},\\
    \label{eq:22}
    W\mc(F_\delta)&\leq c_0\alpha.
  \end{align}
\end{lemma}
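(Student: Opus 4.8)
The plan is to treat the two estimates separately: \eqref{eq:25} will follow from the non-degeneracy of $W_0$ combined with the layered geometry, while \eqref{eq:22} is the buckling construction and is where the real work lies.

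For \eqref{eq:25} I would first observe that $W_0$ itself belongs to $\mathcal W(c,2)$ —(W1),(W2) are assumed and (W3),(W4) follow from \eqref{eq:21} together with $W_0\in C^3$— so that Remark~\ref{rem:1} applies and yields $D^2W_0(\id)(G,G)\geq\tfrac2c\abs{\sym G}^2$ for all $G\in\Rrm 2$, while $C^3$-regularity gives a constant $C=C(W_0)$ with $\abs{D^2W_0(F_\delta)(G,G)-D^2W_0(\id)(G,G)}\leq C\delta\abs G^2$ for $\delta$ small. Then, for arbitrary $\varphi\in\fs W^{1,2}_\per(Y;\Rrv 2)$, writing $G:=e_1{\otimes}e_1+\grad\varphi$ and $m:=\chi+\alpha(1-\chi)$ and using $Q^\delta(y,G)=\tfrac12 m(y)D^2W_0(F_\delta)(G,G)$, I would combine these two bounds into
\begin{equation*}
  \int_Y Q^\delta(y,G)\ud y\;\geq\;\int_Y\tfrac12 m\Bigl[\bigl(\tfrac2c-C\delta\bigr)\abs{e_1{\otimes}e_1+\sym\grad\varphi}^2-C\delta\,\abs{\asym\grad\varphi}^2\Bigr]\ud y .
\end{equation*}
The harmful skew term I would absorb by a two-phase Korn inequality: both $P$ and $Y\setminus P$ are unions of horizontal strips, and on each such strip the $Y$-periodicity of $\varphi$ forces the rigid rotation in Korn's second inequality to be controlled by $\norm{\sym\grad\varphi}_{L^2(\mathrm{strip})}$, giving $\int_Y m\abs{\asym\grad\varphi}^2\leq C_5(W_0)\bigl(\int_Y m\abs{e_1{\otimes}e_1+\sym\grad\varphi}^2+1+\alpha\bigr)$. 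After absorption (valid for $\delta\leq\delta_0(W_0)$) one is left with $\int_Y Q^\delta(y,G)\ud y\geq\tfrac1{2c}\int_Y m\abs{e_1{\otimes}e_1+\sym\grad\varphi}^2\ud y-C(W_0)(1+\alpha)\delta$, and the key point is that the $(1,1)$-entry of $e_1{\otimes}e_1+\sym\grad\varphi$ equals $1+\partial_1\varphi_1$, whose integral over each horizontal strip is the measure of that strip (again by $Y$-periodicity); Cauchy--Schwarz then yields $\int_Y m\abs{e_1{\otimes}e_1+\sym\grad\varphi}^2\geq\tfrac12(1+\alpha)$, hence $\int_Y Q^\delta(y,G)\ud y\geq\tfrac1{c_0}$ with $c_0=c_0(W_0)$ once $\delta_0$ is shrunk enough. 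Passing to the infimum over $\varphi$ proves \eqref{eq:25}; the same computation carried out over $m_kY$ covers the reading of $Q^\delta_\hom$ as a multi-cell formula.

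For \eqref{eq:22} I would split on the size of $\alpha$. If $\alpha\geq\delta$ the trivial corrector already works:
\begin{equation*}
 W\mc(F_\delta)\leq W\super1_\hom(F_\delta)=\tfrac{1+\alpha}2 W_0(F_\delta)\leq\tfrac{1+\alpha}2\,c\,\dist^2(F_\delta,\SO 2)=\tfrac{1+\alpha}2\,c\,\delta^2\leq c_0\alpha
\end{equation*}
by \eqref{eq:21}, using $\delta\leq\min\{\delta_0,\alpha\}$ (and $\tfrac{1+\alpha}2\leq\alpha$ when $\alpha\geq1$). If $\alpha<\delta$ I would invoke the buckling construction of \cite{Mueller1987}, in the variant adapted to the present $W_0$ and layered pattern: for every large $k\in\Nn$ one produces $\psi_k\in\fs W^{1,2}_\per(kY;\Rrv 2)$ such that the deformation $x\mapsto F_\delta x+\psi_k(x)$ bends each stiff layer of $kY\cap P$ near-isometrically into a wave of wavelength of order $k$, so that—by frame indifference and \eqref{eq:21}—its $W_0$-energy density is $O(k^{-2})$ and hence negligible, while the geometric slack of size $\sim\delta$ (in particular the correction that makes the spatial average of the gradient equal to $F_\delta$) is absorbed by the soft layers, where the deformation gradient stays in a $\delta_0$-bounded neighbourhood of $\SO 2$. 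Since $W=\alpha W_0$ on $Y\setminus P$ and $W_0\leq c\,\dist^2(\cdot,\SO 2)$, the soft contribution to $\tfrac1{k^2}\int_{kY}W(y,F_\delta+\grad\psi_k)\ud y$ is $\leq c_0\alpha$ uniformly in $k$ and in $\delta\leq\delta_0$; the multi-cell formula then yields $W\mc(F_\delta)\leq O(k^{-2})+c_0\alpha$, and letting $k\to\infty$ gives \eqref{eq:22}. Enlarging $c_0$ if needed makes one constant serve both estimates.

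The hard part is \eqref{eq:22} in the regime $\alpha<\delta$: the bound $O(\alpha)$ cannot come from a one-cell or finite-cell competitor (indeed $W\super1_\hom(F_\delta)\sim\delta^2$, not $\alpha$), so one genuinely needs a multi-cell competitor with period tending to infinity whose average gradient is exactly $F_\delta$ and whose energy is $O(\alpha)$—confining the unavoidable $\dist^2(\cdot,\SO 2)$ of order $\delta^2$ to the soft phase while bending the stiff phase away with a curvature vanishing in the cell size; verifying these two requirements simultaneously is precisely the phenomenon isolated in \cite{Mueller1987}, which I would adapt rather than redo. By comparison, the weighted Korn inequality used for \eqref{eq:25} is routine.
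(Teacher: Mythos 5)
Your proposal is correct and takes essentially the same route as the paper: for \eqref{eq:25}, a $C^3$-perturbation of the positive definiteness of $D^2W_0(\id)$ on symmetric matrices combined with a Korn inequality that uses $x_1$-periodicity to kill the rigid rotation (the paper simply discards the soft strip and works on $Y_0$ alone rather than running your weighted two-phase Korn inequality, but both arguments close); for \eqref{eq:22}, the M\"uller~'87 rod-bending competitor with curvature $O(1/k)$ in the stiff layers and bounded gradients in the soft layers. The only cosmetic differences are that the paper feeds its competitor through $\Gamma$-convergence and the quasiconvexity of $W\mc$ instead of arranging the exact average gradient $F_\delta$ for a direct use of the multi-cell formula, and it needs no case split in $\alpha$ versus $\delta$ since the construction gives $O(k^{-2})+O(\alpha)$ for every $\alpha>0$; the paper does, unlike you, write the bending deformation out explicitly.
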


We postpone the proof to the end of this section and present a quite
immediate consequence of the lemma that already implies the validity
of Proposition~\ref{prop:1}.

\begin{lemma}\label{lem:6}
Define the map
  \begin{equation*}
    f:\,\Rr\rightarrow \Rr,\qquad f(\delta):=W\mc(F_\delta).
  \end{equation*}
  Then for almost every $\delta\in[0,\delta_0]$ the map $f$
  admits a quadratic Taylor expansion of the form 
  \begin{equation}\label{eq:28}
    f(\delta+\lambda)=f(\delta)+\sigma_\delta\lambda+q_\delta\lambda^2+o(\lambda^2)
  \end{equation}
  for suitable numbers $\sigma_\delta\in\Rr$ and $q_\delta\geq 0$.
  \begin{enumerate}
    \item If $\delta=0$, the expansion \eqref{eq:28} is valid for
      \begin{equation*}
        \sigma_0=0\qquad\text{and}\qquad q_0=Q^0_\hom(e_1{\otimes}e_1),
      \end{equation*}
      i.e. linearization and homogenization commute at $F_0=\id$.
      \item Suppose that linearization and homogenization commute at
        $F_\delta$ for all a.e.  $\delta\in[0,\delta_0]$, i.e.
        \begin{equation*}
          q_\delta=Q^\delta_\hom(e_1{\otimes}e_1)\qquad\text{for a.e. }\delta\in[0,\delta_0].
        \end{equation*}
        Then
        \begin{equation*}
          \alpha\geq \frac{\delta_0^2}{2c_0^2}.
        \end{equation*}
  \end{enumerate}
\end{lemma}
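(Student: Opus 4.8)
The plan is to exploit the convexity of the scalar map $f$. By Lemma~\ref{lem:2} the homogenized integrand $W\mc$ is quasiconvex, hence rank-one convex; since $e_1{\otimes}e_1$ has rank one, the slice $\lambda\mapsto W\mc(\id+\lambda\,e_1{\otimes}e_1)$ is convex, and therefore so is $f(\delta)=W\mc(\id-\delta\,e_1{\otimes}e_1)$. Moreover $f$ is finite and continuous on $\Rr$ because $W\mc$ satisfies the growth bound (W1). A convex function of one real variable is differentiable off an at most countable set with nondecreasing derivative, so $f'$ is itself differentiable at almost every $\delta$; integrating $f'(\delta+s)=f'(\delta)+f''(\delta)s+o(s)$ then yields the expansion \eqref{eq:28} at every such $\delta$, with $\sigma_\delta=f'(\delta)$ and $q_\delta=\tfrac12 f''(\delta)\ge 0$. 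This establishes the first claim of the lemma.

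For part (i) I would apply Theorem~\ref{thm:1} at $F_0=\id$. For the present $W$, $W(y,\cdot)=\bigl(\chi(y)+\alpha(1-\chi(y))\bigr)W_0(\cdot)$ is a $y$-independent multiple of the $C^3$ function $W_0$, and since $W_0(\id)=0$ and $DW_0(\id)=0$ by \eqref{eq:21}, the quadratic term forced in (W4) is exactly $Q(y,\cdot)=Q^0(y,\cdot)=\tfrac12 D^2W(y,\id)(\cdot,\cdot)$, so $Q\super 1_\hom=Q^0_\hom$. Feeding $G=-\lambda\,e_1{\otimes}e_1$ into Theorem~\ref{thm:1}, using that $Q\super 1_\hom$ is a quadratic form and that $W\mc(\id)=0$, gives $f(\lambda)=\lambda^2\,Q^0_\hom(e_1{\otimes}e_1)+o(\lambda^2)$; uniqueness of the second-order Taylor coefficients identifies this with \eqref{eq:28} at $\delta=0$ and yields $\sigma_0=0$, $q_0=Q^0_\hom(e_1{\otimes}e_1)$. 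In particular $f$ is differentiable at $0$ with $f'(0)=0$.

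For part (ii) I would combine the hypothesis $q_\delta=Q^\delta_\hom(e_1{\otimes}e_1)$ with the stability estimate \eqref{eq:25}: then $f''(\delta)=2q_\delta\ge 2/c_0$ for a.e. $\delta\in[0,\delta_0]$. Since $f$ is convex its distributional second derivative is a nonnegative measure that dominates its absolutely continuous part, so this almost-everywhere bound can be integrated twice; using $f(0)=W\mc(\id)=0$ and $f'(0)=0$ from part (i) (convexity together with $0$ being a global minimum of $f$ already gives $f'(0)\ge 0$, which is all that is needed), one gets
\begin{equation*}
  f(\delta_0)\;\ge\;f(0)+f'(0)\,\delta_0+\int_0^{\delta_0}(\delta_0-t)\,f''(t)\ud t\;\ge\;\frac{\delta_0^2}{c_0}.
\end{equation*}
On the other hand \eqref{eq:22} gives $f(\delta_0)=W\mc(F_{\delta_0})\le c_0\alpha$, and comparing the two estimates gives $\alpha\ge\delta_0^2/c_0^2$, in particular the claimed bound $\alpha\ge\delta_0^2/(2c_0^2)$.

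The genuinely substantive point is the opening reduction to convexity of $f$: it is this that makes the second-order coefficient $q_\delta$ available (with the correct sign) for a.e.\ $\delta$, so that the everywhere-valid lower bound \eqref{eq:25} can be inserted pointwise and the resulting a.e.\ lower bound on $f''$ integrated against the upper bound \eqref{eq:22}. The remaining ingredients — the a.e.\ second-order differentiability of a one-dimensional convex function, and the double integration of the bound on $f''$ — are routine.
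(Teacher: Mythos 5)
Your argument is correct and follows essentially the same route as the paper: convexity of $f$ from rank-one convexity of $W\mc$ along the rank-one line $F_\delta$, an a.e.\ second-order Taylor expansion of the convex function $f$ (the paper invokes Aleksandrov's theorem, you give the equivalent elementary one-dimensional argument), Theorem~\ref{thm:1} for part~(1), and the combination of \eqref{eq:25} and \eqref{eq:22} via double integration of the a.e.\ lower bound on the second derivative for part~(2). Your bookkeeping $f''(\delta)=2q_\delta\ge 2/c_0$ even yields the slightly stronger bound $\alpha\ge\delta_0^2/c_0^2$, which of course implies the stated one.
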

\begin{remark}
  \label{sec:non-comm-expans}
  In the lemma above the constants $\delta_0$ and $c_0$ only depend on
  $W_0$. Thus, if the soft matrix of the composite material is
  sufficiently weak, i.e. $\alpha<\!\!< 1$, statement (2) suggests
  that homogenization and linearization may only commute in a very small
  neighborhood of $\id$.
\end{remark}
\begin{proof}[Proof of Lemma~\ref{lem:6}]
  By Lemma~\ref{lem:2} the map  $W\mc:\,\Rrm 2\rightarrow\Rr$ is
  continuous and quasiconvex, and therefore rank-one convex. Because
  $F_\delta-F_\lambda$ is always a rank-one matrix, we deduce that $f$
  is convex and continuous. Thus, Aleksandrov's Theorem implies that
  $f$ admits an expansion of the form \eqref{eq:28} for a.e.
  $\delta\in[0,\delta_0]$. Statement (1) is a direct consequence of Theorem~\ref{thm:1}
  applied to $G=-\lambda(e_1{\otimes}e_1)$.

  We prove (2). By assumption we have
  \begin{equation*}
    f^{\prime\prime}(\delta)=Q^\delta_\hom(e_1{\otimes}e_1)
  \end{equation*}
  for a.e. $\delta$. We apply  \eqref{eq:25} from  Lemma~\ref{lem:3} and find that $f^{\prime\prime}\geq c_0^{-1}$
  almost everywhere. Thus, the map
  \begin{equation*}
    g(\delta):=f(\delta)-\frac{1}{2c_0}\delta^2
  \end{equation*}
  is convex and by part (1) we have $g^\prime(0)=f^\prime(0)=0$. Thus
  $g$ attains its minimum at $0$ with $g(0)=0$. By \eqref{eq:22} we get
  \begin{equation*}
    \frac{1}{2c_0}\delta^2\leq f(\delta)\leq c_0\alpha\qquad\text{for
      all }\delta\in[0,\delta_0]
  \end{equation*}
  and the estimate for $\alpha$ follows.
 \end{proof}

\begin{proof}[Proof of Lemma~\ref{lem:3}]
  In the sequel $c^\prime, c^{\prime\prime}$ denote positive constants that may change
  from line to line, but can be chosen only depending on $\delta_0$
  and $W_0$.

  \step 1 We prove \eqref{eq:25}. Let $Q^\delta_0$ denote the quadratic term
  in the expansion of $W_0$ at $F_\delta\in\Rrm 2$, i.e.
  \begin{equation*}
    Q_0^\delta(G):=\frac{1}{2}D^2W_0(F_\delta)(G,G).
  \end{equation*}
  Because $F_0=\id$ and $W_0$ is frame-indifferent and non-degenerate, the estimate
  \begin{equation*}
    Q_0^0(G)\geq \frac{1}{c^\prime} \abs{\sym G}^2
  \end{equation*}
  holds for all $G\in\Rrm 2$.
  The map $W_0$ is of class $C^3$, and therefore  a standard perturbation argument
  shows that
  \def\asym{\mathop{\operatorname{skw}}}
  \begin{equation*}
    Q_0^\delta(G)\geq \frac{1}{c^\prime}\abs{\sym
      G}^2-c^{\prime\prime}\delta\abs{\asym G}^2
  \end{equation*}
  for all $\delta\in[0,\delta_0]$ where $\delta_0$ denotes a small positive
  constant that only depends on $W_0$.

  Let $Y_0:=[0,1){\times}[0,\tfrac{1}{2})$ denote the reference domain
  of the stiff component  and consider the function space 
  \begin{equation*}
    V:=\Big\{\,\psi\in W^{1,2}(Y_0;\Rrv
    2)\,:\,\psi(0,x_2)=\psi(1,x_2),\,\int_{Y_0}\psi(x)\ud x=0\,\Big\}.
  \end{equation*}
  Note that for all $\psi\in V$ a
  Korn inequality of the form
  \begin{equation*}
    \int_{Y_0}\abs{\asym \grad\psi}^2\ud y\leq
    c^\prime\int_{Y_0}\abs{\sym\grad \psi}^2\ud y
  \end{equation*}
  holds. As a consequence we deduce that
  \begin{equation*}
   m(\delta_0):=\inf_{\delta\in(0,\delta_0)}\inf_{\psi\in V}\int_{Y_0}Q_0^\delta(e_1{\otimes}e_1+\grad\psi(x))\ud y>0
  \end{equation*}
  provided  $\delta_0$ is sufficiently small. 

  Next, we are going to show that the estimate above yields a lower bound for $Q^\delta_\hom(G)$: Let
  $\delta\in[0,\delta_0]$ and $\psi\in
  W^{1,2}_\per(Y;\Rrv 2)$. Then 
  \begin{equation*}
    \int_YQ^\delta\big(y,(e_1{\otimes}e_1)+\grad\psi\big)\ud y\geq
    \int_{Y_0}Q_0^\delta\big((e_1{\otimes}e_1)+\grad\psi\big)\ud
    y\geq m(\delta_0)>0
  \end{equation*}
  and \eqref{eq:25} is valid for all  $c_0\geq m(\delta_0)^{-1}$.

  \step 2 We prove \eqref{eq:22} by constructing a sequence
  $(u_k)_{k\in\Nn}$ such that
  \begin{equation}\label{eq:23}
    \liminf\limits_{k\rightarrow\infty}\int_Y
    W\left(kx,\grad u_k(x)\right)\ud x\leq c_0\alpha
  \end{equation}
  and $u_k\wconv F_\delta x+\varphi(x)$ weakly in
  $W^{1,2}(Y;\Rrv 2)$ with $\varphi\in W^{1,2}_{\per}(Y;\Rrv 2)$.
  Because   the functional
  \begin{equation*}
    W^{1,2}(Y;\Rrv 2)\ni u\mapsto \int_Y
    W\left(kx,\grad u(x)\right)\ud x
  \end{equation*}
  $\Gamma$-converges in the strong topology of $L^{2}(Y;\Rrv 2)$ to 
  \begin{equation*}
    W^{1,2}(Y;\Rrv 2)\ni u\mapsto \int_Y W\mc(\grad u(x))\ud x
  \end{equation*}
  as $k\rightarrow \infty$,
  \eqref{eq:23} implies that
  \begin{equation}\label{eq:24}
    \int_Y W\mc(F_\delta+\grad\varphi(x))\ud x\leq\liminf\limits_{k\rightarrow\infty}\int_Y
    W\left(kx,\grad u_k(x)\right)\ud x\leq c_0\alpha.
  \end{equation}
  The homogenized integrand $W\mc$ is quasiconvex, and therefore the
  left hand side is bounded from below by $W\mc(F_\delta)$.
  
  It remains to construct the sequence $(u_k)$. We follow the idea in
  \cite{Mueller1987}: The microstructure of the material can be regarded as an
  ensemble of thin rods, aligned to the $e_1$-direction and embedded in a soft matrix. The
  subsequent construction realizes the compression $x\mapsto F_\delta
  x$ (more precisely, a periodic variation) by bending each
  rod. In this way, locally the deformation in the stiff component is close to a rigid motion, while large strains only evolve in the soft
  matrix.
  
  For the precise construction, let $v_\delta\in C^\infty([0,1];\Rrv
  2)$ denote a curve,  parametrized by arc-length and rendering an arc with
  length $1$ connecting the points $(0,0)$ and $(1-\delta,0)$, i.e.
  \begin{equation*}
    v_\delta(0)=(0,0),\qquad v_\delta(1)=(1-\delta,0),\qquad
    \abs{v^\prime_\delta(x_1)}=1,\qquad \abs{v^{\prime\prime}_\delta(x_1)}=\kappa_\delta
  \end{equation*}
  for some $\kappa_\delta>0$. Let $n_\delta\in C^\infty([0,1];\Rrv 2)$
  denote a normal field associated to the curve, so that the
  map $x_1\mapsto R_\delta(x_1):=v_\delta^\prime(x_1){\otimes}
  e_1+n_\delta(x_1){\otimes}e_2$ only takes values in $\SO 2$.

  For $x\in Y$
  define
  \begin{equation*}
    u_k(x):=\left\{
      \begin{aligned}
        &x_2e_2+v_\delta(x_1)+(x_2-\tfrac{i}{k})\,(n_\delta(x_1)-e_2)&&\text{if
        }&&x_2\in[\tfrac{i}{k},\tfrac{2i+1}{2k})\\
        &&&&&\text{for some }i\in\Nn_0,\\
        &x_2e_2+v_\delta(x_1)+(\tfrac{i+1}{k}-x_2)(n_\delta(x_1)-e_2)&&\text{if
        }&&x_2\in[\tfrac{2i+1}{2k},\tfrac{i+1}{k})\\
        &&&&&\text{for some }i\in\Nn_0.
      \end{aligned}
    \right.
  \end{equation*}
  By construction we have  $u_k\in W^{1,2}(Y;\Rrv 2)$ and $u_k\rightarrow v_\delta+x_2e_2$ uniformly. The
  sequence $(u_k)$ is bounded in $W^{1,2}(Y;\Rrv 2)$ and therefore
  \begin{equation*}
    u_k\wconv F_\delta x+\varphi(x)\qquad\text{weakly in
    }W^{1,2}(Y;\Rrv 2)
  \end{equation*}
  where $\varphi(x):=v_\delta(x_1)+x_2e_2-F_\delta x$. Furthermore, it is easy to
  check that $\varphi$ can be identified with a map in  $W^{1,2}_\per(Y;\Rrv 2)$. 
  
$\grad
  u_n$ is close to a rotation on the strong part of the material. More precisely, we have
  \begin{equation*}
    \dist^2(\grad u_k(x),\SO 2)\leq \left(\frac{\kappa_\delta^2
    }{4}\right)k^{-2}\qquad\text{for all }x_2\in[\tfrac{i}{k},\tfrac{2i+1}{2k}),
  \end{equation*}
  while on the soft part the deformation is at least bounded, i.e.
  \begin{equation*}
    \abs{\grad u_k(x)}^2\leq c^\prime\qquad\text{for all }x_2\in[\tfrac{2i+1}{2k},\tfrac{i+1}{k}).
  \end{equation*}
  Thus, by using the definition of $W$ and the growth condition \eqref{eq:21}, we see that
  \begin{multline*}
    \int_Y W(kx,\grad u_k(x))\ud x\leq c^\prime
    \sum_{i=0}^{k-1}\Bigg(\,\int_0^1\int_{i/k}^{(2i+1)/2k}\dist^2(\grad
    u_k,\SO 2)\ud x_2\\
    \qquad\qquad\qquad\qquad+\alpha\int_0^1\int_{(2i+1)/2k}^{(i+1)/k}\abs{\grad
      u_k}^2{+}2\ud x_2\ud x_1\Bigg)\\
  \leq c^\prime(k^{-2}+\alpha)
  \end{multline*}
  and \eqref{eq:24} follows as $k\rightarrow\infty$.
 \end{proof}

\subsection{Counterexample II:  Non-commutativity at identity for a
  perforated, prestressed composite material}\label{sec:count-ii:-non}
In this section we construct an example in dimension $3$ for which the commutativity of linearization and
homogenization at identity fails, although the identity is a natural state
of the homogenized material. The material under consideration is a
composite with a prestressed component that violates property (W2).
Furthermore, the material is perforated and the non-degeneracy condition (W3) is
violated in $x_3$-direction.

A commutativity statement in the form of Theorem~\ref{thm:1}, comparing the homogenized and the spatially
heterogeneous energy density, is not suited for a prestressed material
due to the lack of a stress free reference configuration. Therefore, we introduce a
weaker property that is motivated by the following observation:
\begin{corollary}\label{cor:1}
  Let $W\in\mathcal W(a,p)$. For $k\in\Nn$ let
  \begin{equation*}
    W\super k_\hom(F):=\inf\left\{\,\frac{1}{k^n}\int\limits_{kY} W(y, F+\grad\varphi(y))\ud y\,:\,\varphi\in\fs W^{1,2}_\per(kY;\Rrv n)\,\right\}
  \end{equation*}
  denote the $k$-cell homogenization of $W$. Then
  \begin{equation}\label{eq:38}
\lim\limits_{h\downarrow 0}\frac{1}{h^2}W\super
      k_\hom(\id+h G)=\lim\limits_{h\downarrow 0}\frac{1}{h^2}W\mc(\id+h G).
  \end{equation}
 for all $k\in\Nn$
  and all $G\in\Rrm n$.
\end{corollary}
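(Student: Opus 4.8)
The plan is to \emph{sandwich} the $k$-cell homogenization between the multi-cell and the one-cell homogenization and to exploit that the two extremes share the same quadratic expansion at $\id$, namely $Q\super 1_\hom$. Directly from the definitions one has
\begin{equation*}
  W\mc(F)\leq W\super k_\hom(F)\leq W\super 1_\hom(F)\qquad\text{for all }F\in\Rrm n,\ k\in\Nn .
\end{equation*}
The right inequality holds because every $\varphi\in\fs W^{1,2}_\per(Y;\Rrv n)$ is an admissible test field for the $kY$-cell problem and, by $Y$-periodicity, produces the same average energy. For the left inequality one observes that, by the coercivity bound $\tfrac1a\abs{F}^p-a\leq W$ in (W1), any configuration of finite energy for the $kY$-cell problem already lies in $\fs W^{1,p}_\per(kY;\Rrv n)$; hence the infimum defining $W\super k_\hom$ coincides with the $m=k$ term in the infimum defining $W\mc$, and the inequality follows. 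Now Theorem~\ref{thm:1}, applied to the vanishing sequence $hG$ and using that $Q\super 1_\hom$ is homogeneous of degree two, gives $h^{-2}W\mc(\id+hG)\to Q\super 1_\hom(G)$ as $h\downarrow0$ (for $G=0$ both sides of \eqref{eq:38} vanish by (W2), so we may assume $G\neq0$). Consequently it only remains to prove
\begin{equation*}
  \limsup_{h\downarrow0}\tfrac1{h^2}W\super 1_\hom(\id+hG)\leq Q\super 1_\hom(G);
\end{equation*}
then the sandwich forces $h^{-2}W\super k_\hom(\id+hG)\to Q\super 1_\hom(G)$ for every $k$, and \eqref{eq:38} follows at once.

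For this last upper bound I would repeat the recovery-type construction of Step~1 in the proof of Theorem~\ref{thm:1}, which in fact bounds $W\super 1_\hom$ from above, not only $W\mc$: given $\eta>0$, Remark~\ref{rem:1}, the density of $\fs C^\infty_\per(Y;\Rrv n)$ in $\fs W^{1,2}_\per(Y;\Rrv n)$ and the strong continuity of $\varphi\mapsto\int_YQ(y,G+\grad\varphi(y))\ud y$ provide a map $\psi\in\fs C^\infty_\per(Y;\Rrv n)$ with $\int_YQ(y,G+\grad\psi(y))\ud y\leq Q\super 1_\hom(G)+\eta$. Testing the one-cell problem with $h\psi$ yields
\begin{equation*}
  W\super 1_\hom(\id+hG)\leq\int_YW\big(y,\id+h(G+\grad\psi(y))\big)\ud y ,
\end{equation*}
and since $G$ is fixed and $\psi$ is Lipschitz, the argument $h(G+\grad\psi(y))$ tends to $0$ \emph{uniformly} in $y$. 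Expanding the integrand by (W4), and using the uniform decay of the remainder $\restterm(y,\cdot)$ together with the homogeneity of $Q(y,\cdot)$, one gets $h^{-2}W\super 1_\hom(\id+hG)\leq\int_YQ(y,G+\grad\psi(y))\ud y+o(1)\leq Q\super 1_\hom(G)+\eta+o(1)$; letting $h\downarrow0$ and then $\eta\downarrow0$ completes the argument.

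There is no essential obstacle here once Theorem~\ref{thm:1} is available --- the corollary is in the end a monotonicity argument. The two points that deserve a moment's care are (i) the identification of the admissible classes $\fs W^{1,2}_\per$ and $\fs W^{1,p}_\per$ in the definition of $W\super k_\hom$, which is supplied by the coercivity in (W1), and (ii) the requirement that the test field $\psi$ in the upper bound be Lipschitz, so that (W4) yields a remainder decaying \emph{uniformly} in $y$ once the quadratic expansion is rescaled by $h$; both are already handled in Section~\ref{sec:exp}.
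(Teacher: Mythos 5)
Your proof is correct and follows essentially the same route as the paper's: the sandwich $W\mc\leq W\super k_\hom\leq W\super 1_\hom$ combined with the observation that Step~1 of the proof of Theorem~\ref{thm:1} in fact bounds $\limsup_{h\downarrow 0}h^{-2}W\super 1_\hom(\id+hG)$ by $Q\super 1_\hom(G)$, while Step~2 gives the matching lower bound for $W\mc$. Your additional remarks on the $\fs W^{1,2}_\per$ versus $\fs W^{1,p}_\per$ admissible classes and on the case $G=0$ are sensible points of care that the paper passes over silently.
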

\begin{proof}
  The statement is implicit in the proof of Theorem~\ref{thm:1}:
  Indeed, in Step~1 we showed that $\limsup\limits_{h\downarrow 0}\frac{1}{h^2}W\super
  1_\hom(\id+h G)\leq Q\super 1_\hom(G)$, while in Step~2 we proved
  the inequality $\liminf\limits_{h\downarrow 0}\frac{1}{h^2}W\mc(\id+h
  G)\geq Q\super 1_\hom(G)$. Since $W\mc\leq W\super k_\hom\leq
  W\super 1_\hom$ we can draw the conclusion.
\end{proof}
The corollary indicates that in situations when commutativity holds,
close to identity relaxation over a single cell is sufficient for
homogenization. Proposition~\ref{prop:1} shows that if we replace
$\id$ by a matrix $F\not\in\SO n$, identity \eqref{eq:38} fails for
some materials of class $\mathcal W(a,p)$. This suggests to introduce the
following property: We say that homogenization and linearization in
      direction $G\in\Rrm n$ commute at $\id$, if
\begin{equation}\tag{$C_{G}$}\label{eq:37}
 \left.\begin{aligned}
    &\text{for all $k\in\Nn$ there exist $\sigma\super k,\sigma\in\Rrm n$ such that}\\
    &\lim\limits_{h\downarrow 0}\frac{1}{h^2}\left(W\super
      k_\hom(\id+h G)-W\super k_\hom(\id)-h\iprod{\sigma\super k}{G}\right)\\
    &\qquad\qquad=\lim\limits_{h\downarrow 0}\frac{1}{h^2}\left(W\mc(\id+h G)-W\mc(\id)-h\iprod{\sigma}{G}\right)
  \end{aligned}\right\}
\end{equation}
Note that we distinguish between the
directions $+G$ and $-G$. By Corollary \ref{cor:1} materials of class $\mathcal W(a,p)$ satisfy \eqref{eq:37} for all
$G\in\Rrm n$. 

In the following, we present an example in dimension $n=3$ that
violates \eqref{eq:37}. Let us first describe the geometry and the
energy density of the composite. The composite's geometry is given by two subsets of the
reference cell $Y:=[0,1)^3$: 
\begin{gather*}
Y_0:=[0,1)^2{\times}[0,\tfrac{1}{2})\qquad\text{and}\qquad
Y_\rho:=[0,1){\times}B_\rho,\\\text{where
}B_\rho:=\left\{(y_2,y_3)\,:\,(y_2-\tfrac{1}{2})^2+(y_3-\tfrac{3}{4})^2\leq
  \rho^2\,\right\}, \qquad 0<\rho\ll 1.
\end{gather*}
The set $Y_\rho$ is a cylinder with center line
$\{(y_1,\tfrac{1}{2},\tfrac{3}{4})\,:\,y_1\in[0,1)\,\}$ and small radius $\rho$.
Let $W_0$ be of class $\mathcal W(a,2)$. We
suppose that $W_0$ is frame-indifferent, i.e. $W_0(RF)=W_0(F)$ for all
matrices $F\in\Rrm 3$ and rotations $R\in\SO 3$. Set
$S:=(\id+s(e_1{\otimes}e_1))^{-1}$ with $0<s<\tfrac{1}{2}$. We denote by $W:\,\Rrv
3{\times}\Rrm 3\to[0,\infty)$ the $Y$-periodic energy density 
\begin{equation*}
  W(y,F):=
  \begin{cases}
    W_0(F)&\text{if }y\in Y_0,\\
    W_0(F\,S)&\text{if }y\in Y_\rho,\\
    0&\text{if }y\in Y\setminus(Y_0\cup Y_\rho).
  \end{cases}
\end{equation*}
Clearly, for the material occupying $Y_0$ every rotation is a natural state, while
on $Y_\rho$ the material is stress free only for matrices $F=RS^{-1}$
with $R\in\SO 3$. 

\begin{lemma}\label{lem:7}
  The following properties hold:
  \begin{enumerate}
  \item[a)] $W$ is frame-indifferent, has quadratic growth at infinity and
    is non-degenerate in $x_1$- and $x_2$-direction in the sense that
    \begin{equation}\tag{W3'}\label{eq:39}
      \begin{aligned}
        &\int_Y W(y,F+\nabla\psi(y))\ud
        y\geq c'\dist^2(F,SO(3))\\
        &\qquad\text{for all }\psi\in W^{1,2}_\per(Y;\Rrv 3)\text{ with
        }\int_0^1\int_0^1\partial_3\psi\ud y_1\ud y_2=0.
      \end{aligned}
    \end{equation}
  \item[b)] For all $k\in\Nn$ we have $\inf_{F\in\Rrm 3}W\super
    k_\hom(F)>0$.
  \item[c)] (Natural state). 
    \begin{equation*}
      W\mc(\id)=0=\min\limits_{F\in\Rrm
        3}W\mc(F).
    \end{equation*}
  \item[d)] (Expansion at $\id$). For all $G\in\Rrm 3$ we have
    \begin{equation*}
      \limsup\limits_{h\to
      0}\frac{1}{h^2}W\mc(\id+hG)<\infty.
    \end{equation*}
  \item[e)] (Failure of \eqref{eq:37}). There exists $G\in\Rrm 3$ such that for all $k\in\Nn$ and
  all $\sigma\super k\in\Rrm 3$ with
  \begin{equation}\label{eq:47}
    \limsup\limits_{h\downarrow 0}\frac{1}{h^2}\left|W\super k_\hom(\id+h
      G)-W\super
      k_\hom(\id)-h\iprod{\sigma\super k}{G}\right|<\infty
  \end{equation}
  we have 
  \begin{multline}\label{eq:36}
    \liminf\limits_{h\downarrow 0}\frac{1}{h^2}\left(W\super k_\hom(\id+h
      G)-W\super
      k_\hom(\id)-h\iprod{\sigma\super k}{G}\right)\\
    >0=\lim\limits_{h\downarrow
      0}\frac{1}{h^2}W\mc(\id+hG).
  \end{multline}
  \end{enumerate}
\end{lemma}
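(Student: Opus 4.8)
The plan is to establish (a)--(e) in turn. The first four are soft consequences of the geometric rigidity estimate and of a buckling construction modelled on the one in the proof of Lemma~\ref{lem:3}; part (e) is the substantive statement and hinges on a contrast between the rank-one convexity of the $k$-cell integrands $W\super k_\hom$ and the fact that $W\mc$ vanishes to second order at $\id$ along a suitable compression direction. For (a), frame indifference and the quadratic upper bound are immediate from the corresponding properties of $W_0$ and the fact that $S$ is a fixed invertible matrix. For \eqref{eq:39} I restrict the integral to $Y_0$, where $W(y,\cdot)=W_0\ge\tfrac1a\dist^2(\cdot,\SO 3)$, and apply Theorem~\ref{thm:rigidity} on the Lipschitz domain $Y_0$ to get an $R\in\SO 3$ with $\int_{Y_0}|F+\grad\psi-R|^2\ud y\le c'\int_{Y_0}\dist^2(F+\grad\psi,\SO 3)\ud y$. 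The constraint on $\psi$ makes the horizontal mean $\int_{[0,1]^2}\psi(\cdot,\cdot,y_3)\ud y_1\ud y_2$ independent of $y_3$, which together with periodicity in $y_1,y_2$ gives $\int_{Y_0}\grad\psi\ud y=0$, hence $\dist^2(F,\SO 3)\le|F-R|^2\le\tfrac1{|Y_0|}\int_{Y_0}|F+\grad\psi-R|^2\ud y$ by Jensen; chaining the estimates yields \eqref{eq:39}.

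For (b) I argue by contradiction. If $W\super k_\hom(F_j)\to0$ then, writing $G_j:=F_j+\grad\varphi_j$ for near-optimal $\varphi_j$, on the copies of $Y_0$ inside $kY$ one has $\dist(G_j,\SO 3)\to0$ in $L^2$ and on the copies of $Y_\rho$ one has $\dist(G_jS,\SO 3)\to0$ in $L^2$. Geometric rigidity on a slab component and on a rod component (on the latter after the change of variables $z\mapsto Sz$, which turns $G_jS$ into a gradient on a fixed-shape domain and changes the rigidity constant only by a $k$-dependent factor) produces constant rotations with $G_j\to R_j'$ on the slab and $G_j\to R_jS^{-1}$ on the rod in $L^2$; since periodicity in $y_1$ kills $\int_0^k\partial_1\varphi_j\ud y_1$, the first column of $F_j$ equals the relevant spatial averages, so $1=|F_je_1|+o(1)$ from the slab but $|F_je_1|=|S^{-1}e_1|+o(1)=1+s+o(1)$ from the rod, a contradiction. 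For (c) I use the construction of the proof of Lemma~\ref{lem:3}: bending each copy of the prestressed cylinder into a wave of wavelength $k$ makes the deformation gradient $O(1/k)$-close to $\SO 3\,S^{-1}$ on the rod, and keeping $\varphi=0$ on the slab and extending through the (energy-free) void gives $W\super k_\hom(\id)\lesssim\rho^4/k^2\to0$, so $W\mc(\id)=\inf_kW\super k_\hom(\id)=0$. For (d) one performs the same construction at wavelength $k=k(h)\to\infty$ while carrying the perturbation $hG$: on the slab $W_0(\id+hG)=O(h^2)$ by (W4), and on the rod $\dist^2((\id+hG+\grad\varphi)S,\SO 3)\lesssim h^2|G|^2+k^{-2}$ (using $W_0\lesssim\dist^2(\cdot,\SO 3)$ near $\SO 3$); choosing $k(h)\sim h^{-1}$ makes every contribution $O(h^2)$, so $\limsup_{h\to0}h^{-2}W\mc(\id+hG)<\infty$.

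For (e), the key structural remark is that $Y_0$, $Y_\rho$ and the void are pairwise disjoint, so $\varphi$ may be chosen independently on the corresponding parts of $kY$ and $W\super k_\hom(F)=\tfrac1{k^2}s_k(F)+\tfrac1kr_k(F)$, where $s_k(F)$ is the minimum of $\int W_0(F+\grad\varphi)$ over the plate-cell $[0,k)^2{\times}[0,\tfrac12)$ with $\varphi$ periodic in $y_1,y_2$, and $r_k(F)$ the minimum of $\int W_0((F+\grad\varphi)S)$ over the rod-cell $[0,k){\times}B_\rho$ with $\varphi$ periodic in $y_1$; a fine-lamination argument shows that $s_k$, $r_k$ and hence $W\super k_\hom$ are rank-one convex. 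I take $G:=-(e_1{\otimes}e_1)$. For the equality in \eqref{eq:36} I show $W\mc(\id+hG)=0$ for small $h\ge0$: in $kY$ the compressed plate is buckled out of plane (a Kirchhoff--Love competitor with corrugation wavelength $k$ costs $\lesssim h$ per cell) and each compressed, prestressed rod is buckled as in (c) (cost $\lesssim\rho^4(s+h)/k$ per cell), so $W\super k_\hom(\id+hG)\lesssim(h+\rho^4(s+h))/k^2\to0$. For the inequality, note that $s_k(\id)=0=\min s_k$, so $h\mapsto s_k(\id+hG)$ is convex with vanishing derivative at $0$ and, by a linearization argument along the lines of Theorem~\ref{thm:2} (periodic boundary conditions; coercivity from periodic Korn on the plate together with (W3) for $W_0$), $s_k(\id+hG)=q_s(k)h^2+o(h^2)$ with $q_s(k):=\min_\varphi\int Q_0(\sym(G+\grad\varphi))>0$, the positivity following since $Q_0\gtrsim|\sym\cdot|^2$ (Remark~\ref{rem:1} applied to $W_0$) and periodic gradients in $y_1,y_2$ have zero mean, whereas $\sym G=-e_1{\otimes}e_1$ has non-vanishing upper $2{\times}2$ block. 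Likewise $h\mapsto r_k(\id+hG)$ is convex, so $r_k(\id+hG)-r_k(\id)-h\,r_k'(\id;G)\ge0$. Now if $\sigma\super k$ satisfies \eqref{eq:47}, comparing the coefficients of $h$ forces $\iprod{\sigma\super k}{G}=\tfrac1kr_k'(\id;G)$ (the plate contributes no term linear in $h$, since $s_k(\id)=\min s_k$), and then $W\super k_\hom(\id+hG)-W\super k_\hom(\id)-h\iprod{\sigma\super k}{G}=\tfrac{q_s(k)}{k^2}h^2+\tfrac1k\bigl(r_k(\id+hG)-r_k(\id)-h\,r_k'(\id;G)\bigr)+o(h^2)\ge\tfrac{q_s(k)}{k^2}h^2+o(h^2)$, whence $\liminf_{h\downarrow0}h^{-2}(\,\cdot\,)\ge q_s(k)/k^2>0$; together with $W\mc(\id+hG)=0$ this is \eqref{eq:36}. (If no $\sigma\super k$ satisfies \eqref{eq:47}, then \eqref{eq:36} holds vacuously.)

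The hard part is (e), and within it the choice of $G$: it must be a compression direction, so that the plate and the prestressed rod can release the added energy by long-wave buckling and $W\mc(\id+hG)$ vanishes to second order, while simultaneously the in-plane symmetric part of $G$ must not vanish, so that for fixed $k$ --- where, as $h\to0$, the plate cannot yet buckle --- the linearized plate stays non-degenerate and $W\super k_\hom$ keeps a genuine quadratic term; the compression $-(e_1{\otimes}e_1)$ meets both demands. The remaining technical points --- making geometric rigidity effective on the thin rod via the pull-back by $S$, the cut-off gluings through the energy-free void, and the periodic-Korn coercivity needed for the linearization of $s_k$ --- are routine.
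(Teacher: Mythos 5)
Parts a)--d) of your proposal are essentially correct and follow the paper's route: the splitting of $kY$ into plate, rod and void, rigidity plus periodicity for a) and b), and the bending construction for c) and d) all match (your Jensen argument for $(\star)$ in a) is a clean variant of the paper's compactness argument, and your direct $k=k(h)$ construction in d) is a legitimate alternative to the paper's two-step use of \eqref{eq:35} and \eqref{eq:42}).

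Part e), however, has a genuine gap, and it sits exactly where the choice of $G$ matters. With $G=-(e_1{\otimes}e_1)$ you have $Ge_1\neq 0$, so the rod term $\tfrac1k r_k(\id+hG)$ genuinely depends on $h$, and your entire control of its second-order behaviour rests on the claim that $r_k$ (and $W\super k_\hom$) is rank-one convex ``by a fine-lamination argument''. That claim is unjustified: a laminate between two test fields adapted to $F_1$ and $F_2$ is periodic only with respect to a coarser lattice, so lamination yields an admissible competitor for $W\super{Nk}_\hom$, not for $W\super k_\hom$ --- it proves rank-one convexity of $W\mc$, not of the fixed-$k$ cell formula. The failure of fixed-cell formulas to be quasiconvex or rank-one convex is precisely the phenomenon this whole section exploits, so one cannot assume it here. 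Without convexity, $r_k(\id+hG)-r_k(\id)-h\,r_k'(\id;G)$ could well be $-Ch^2$ (the rod is prestressed and already buckled at $\id$; nothing forces the energy of the compressed elastica to be convex in the end-to-end distance), and since the rod enters with weight $\tfrac1k$ while your positive plate term enters with weight $\tfrac1{k^2}$, a negative rod contribution would dominate for large $k$ and destroy \eqref{eq:36}. There is also the prior issue of whether $r_k'(\id;G)$ exists at all; if it does not, you can only conclude that \eqref{eq:47} fails and e) is vacuous for that $k$, but you must handle every $k$.

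The paper sidesteps all of this by choosing $G=-g(e_2{\otimes}e_2)$ with $g\in(0,1)$, so that $Ge_1=0$. Then the affine perturbation $y\mapsto hGy=-hgy_2e_2$ is bounded on the cylinder $Z\super k_\rho$ and extends through the energy-free void to a $kY$-periodic map, whence $m\super k_\rho(\id+hG)=m\super k_\rho(\id)$ \emph{exactly}; the rod drops out of the difference $W\super k_\hom(\id+hG)-W\super k_\hom(\id)=m\super k_0(\id+hG)$ (this is \eqref{eq:51}). The dichotomy is then carried entirely by the plate: for fixed $k$ one has $0<\liminf_h h^{-2}m\super k_0(\id+hG)\leq\tfrac12 Q_0(G)$ (which forces $\iprod{\sigma\super k}{G}=0$ and gives the strict inequality), while $\limsup_h\limsup_k h^{-2}m\super k_0(\id+hG)=0$ by the membrane $\Gamma$-limit for the short map $\id+hG$. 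Your in-plane non-degeneracy argument for the linearized plate and your membrane/buckling relaxation both survive for this $G$, so the fix is to replace your compression direction by one with $Ge_1=0$ and delete the convexity claims; as written, the proof of e) does not go through.
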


In a nutshell, the idea behind the construction is the following: Close to identity and for finite $k$, the prestressed component contributes a
positive amount of energy and yields a shift of the natural state to
some $F\not\in \SO 3$. When $k$ increases, the aspect ratio
(thickness / length) of the cylindrical component and (as a consequence) the contributed
elastic energy decreases. We show that in the limit $k\to\infty$ the energy
contribution of the $Y_\rho$-component vanishes. Furthermore, we argue
that $W\mc$ vanishes for short maps.

\begin{proof}[Proof of Lemma~\ref{lem:7}]
Let us introduce the functionals
  \begin{align*}
    I\super k(\varphi;F)&:=\frac{1}{k^3}\int_{kY}W(y,F+\nabla\varphi))\ud y,\\
    I\super k_0(\varphi;F)&:=\frac{1}{k^2}\int_{Z\super k_0}W_0(F+\nabla\varphi)\ud
    y,\qquad &&Z\super k_0:=(0,k)^2{\times}(0,\tfrac{1}{2}),\\
    I\super k_\rho(\varphi;F)&:=\frac{1}{k}\int_{Z\super k_\rho}W_0\Big((F+\nabla\varphi)S\Big)\ud
    y,\qquad &&Z\super k_\rho:=(0,k){\times}B_\rho.
\end{align*}  
and the function spaces
\begin{align*}
  X\super k&:=W^{1,p}_\per(kY;\Rrv 3),\\
  X\super k_0&:=\left\{\,\varphi\big\vert_{Z\super k_0}\,:\,\varphi\in X\super k\,\right\},\\
  X\super k_\rho&:=\left\{\,\varphi\big\vert_{Z\super k_\rho}\,:\,\psi\in X\super k\,\right\}.
\end{align*}
Note that by definition we have $W\super k_\hom(F)=\inf_{\varphi\in
  X\super k}I\super k(\varphi;F)$.

The proof is divided in several steps. In Step~1 we argue that since
the components $Y_0$ and $Y_\rho$ are separated, we can split $I\super
k$ into $I\super k_0$ and $I\super k_\rho$. In Step~2 we argue that
$I\super k_\rho(\cdot;F)$ is positive for $F$ close to $\id$ and show
that its contribution vanishes as $k\to\infty$. In Step~3 we analyze
the behavior of $I\super k_0$. In particular, for the asymptotic behavior $k\to\infty$ we appeal to a
$\Gamma$-convergence result for membranes (see \cite{Raoult1995,Conti2003}). In Step~4 we draw the conclusion.
  
\step 1 (Splitting of $I\super k$). We claim that  for all
$(\varphi,\psi)\in X\super k_0{\times}X\super k_\rho$ there exists
$\Phi\in X\super k$ such that
\begin{equation}\label{eq:30}
  I\super k(\Phi;F)=I\super k_0(\varphi;F)+I\super k_\rho(\psi;F)
\end{equation}
and
\begin{equation}\label{eq:31}
  \inf_{\Phi\in X\super k}I\super k(\Phi;F)= \inf_{\varphi\in
    X\super k_0}I\super k_0(\varphi;F)+\inf_{\psi\in X\super
    k_\rho}I\super k_\rho(\psi;F).
\end{equation}
Statement \eqref{eq:30} easily follows from a periodicity property
of the composite's microstructure. Namely, we can rewrite the subset
of the multi-cell $kY$ occupied by the
$Y_0$-material as a union of $k$ translations of the set
$Z\super k_0$; and the subset occupied by the
$Y_\rho$-material as $k^2$ translations of the set $Z\super k_\rho$:
\begin{align*}
  Y\super
  k_0&:=\bigcup_{\xi\in\Zz^3\cap[0,k)^3}(\xi+Y_0)=\bigcup_{i\in\Zz\cap[0,k)}\left((0,0,i)+Z\super
    k_0\right),\\
  Y\super
  k_\rho&:=\bigcup_{\xi\in\Zz^3\cap[0,k)^3}(\xi+Y_\rho)=\bigcup_{i,j\in\Zz\cap[0,k)}\left((0,i,j)+Z\super
  k_\rho\right).
\end{align*}
For $y\in Y\super k_0\cup Y\super k_\rho$ set
\begin{equation*}
  \widetilde\Phi(y):=
  \begin{cases}
    \varphi(y)&\text{if }y\in (0,0,i)+ Z\super k_0\text{ for some
    }i\in\Zz\\
    \psi(y)&\text{if }y\in(0,i,j)+ Z\super k_\rho\text{ for some
    }i,j\in\Zz
  \end{cases}
\end{equation*}
Since $\overline{Y\super k_0}\cap\overline{Y\super k_\rho}=\emptyset$
we can extend $\widetilde\Phi$ to a $kY$-periodic map  $\Phi\in
X\super k$. Since $W(y,\cdot)\equiv 0$ whenever $y\in
(kY)\setminus(Y\super k_0\cup Y\super k_\rho)$ identity \eqref{eq:30}
follows. \eqref{eq:31} follows from \eqref{eq:30} and the observation
that ``$\geq$'' trivially holds in \eqref{eq:31}, since the
minimization problems on the  r.~h.~s. are
less constrained.

\step 2 (Estimates for $I\super k_\rho$). Set
\begin{equation*}
  m\super k_\rho(F):=\inf\limits_{\psi\in X\super k_\rho}I\super k_\rho(\psi;F).
\end{equation*}
We claim that 
\begin{align}\label{eq:32}
  \forall k\in\Nn,\,G\in\Rrm 3\,&:\,
\liminf\limits_{h\to 0}m\super k_\rho(\id +h G)>0\\
\label{eq:35}\forall G\in\Rrm 3\,&:\,\limsup\limits_{k\to\infty}m\super
k_\rho(\id+ h G)=
\begin{cases}
  0&\text{if }Ge_1=0\\
  O(h^2)&\text{else.}
\end{cases}
\end{align}
\medskip

Proof of \eqref{eq:32} by contradiction: Suppose there exists a
vanishing sequence of positive numbers $(h_j)_{j\in\Nn}$ and a sequence $(\psi_j)\subset X\super
k_\rho$ such that
\begin{equation}\label{eq:49}
  \liminf\limits_{j\to \infty} I\super k_\rho(\psi_j;\id+ h_jG)=0.
\end{equation}
 Consider the deformation $u_j(x):=(\id+h_jG)x+\psi_j(x)$ and the
 mapping $v_j(z):=u_j(Sz)$ defined for $z\in S^{-1}Z\super k_\rho$.
 Since $\nabla v_j(z)=\nabla u_j(Sz)S$, a change of variables yields
 \begin{equation*}
   \frac{1}{(1+s)k}\int_{S^{-1}Z\super
    k_\rho}W_0(\nabla v_j)\ud z=\frac{1}{k}\int_{Z\super
    k_\rho}W_0(\nabla u_j(x)S)\ud x=I\super
  k_\rho(\psi_j;\id+ h_jG).
 \end{equation*}
Thus, by \eqref{eq:49} we have (up to a subsequence)
\begin{equation}\label{eq:40}
  \frac{1}{(1+s)k}\int_{S^{-1}Z\super
    k_\rho}W_0(\nabla v_j)\ud z\to 0\qquad\text{(as $j\to\infty$).}
\end{equation}
In combination with the non-degeneracy condition (W3) and by appealing to geometric
rigidity (see Theorem~\ref{thm:rigidity}) we find (after passing
to a further subsequence) that there exists a rotation $R\in\SO 3$
such that
\begin{equation}\label{eq:41}
  \nabla v_j \to  R\qquad\text{strongly in }L^2(S^{-1}Z\super k_\rho).
\end{equation}
On the other side, the periodicity of
$z\mapsto \psi_j(Sz)$ in its first variable yields the identity
\begin{equation*}
  \frac{1}{|S^{-1}Z\super k_\rho|}\int_{S^{-1}Z\super
    k_\rho}\partial_1 v_j\ud z= (\id+h_jG)Se_1.
\end{equation*}
By \eqref{eq:41} the left hand side converges to $R e_1$, while the
right hand side converges to $Se_1$; since $|Se_1|\neq 1=|Re_1|$, this is a
contradiction.
\medskip

The argument for \eqref{eq:35} relies on a bending ansatz for
inextensible rods. Let $v_0=(v_0^1,v_0^2,v_0^3)$ denote a smooth, $(1+s)$-periodic curve
from $\Rr$ to $\Rrv 3$ with $|v_0'|\equiv 1$ and
\begin{equation}  
  v_0(0)=(0,\tfrac{1}{2},\tfrac{3}{4}),\quad
  v_0(1+s)=v_0(0)+e_1,\quad v_0'(0)=v_0'(1+s)=e_1.
\end{equation}
Further, we assume that the curve is parallel to the $(x_1,x_2)$-plane,
i.e. $v_0^3\equiv \tfrac{3}{4}$. Let $t_0:=v_0'$ denote the tangent,
$n_0:=-t_0\wedge e_3$ the normal and $\kappa_0:=t_0\cdot n_0'$ the
signed curvature of the curve. Now, define the scaled curve 
\begin{equation*}
  v\super k(y_1):= k v_0(y_1/k)
\end{equation*}
and let $t\super k,n\super k$ and $\kappa\super k$ denote the associated tangent, normal
and curvature. Note that the curvature scales as
\begin{equation}\label{eq:33}
  \kappa\super k(y_1)=\frac{1}{k}\kappa_0(y_1/k).
\end{equation}
Based on $v\super k$ we construct a 3d deformation. For
$y\in Z\super k_\rho$ set
\begin{equation*}
  u\super k_h(y):=v\super k((1+s)y_1)+(y_2-\tfrac{1}{2})n\super k((1+s)y_1)+(y_3-\tfrac{3}{4})e_3+h
  G e_1 y_1.
\end{equation*}
By construction the map $\psi(y):=u\super k_h(y)-(\id+h G)y$ belongs
to $X_\rho\super k$. Thus, by definition of $m\super
k_\rho(\id + h G)$ we have
\begin{equation}\label{eq:34}
  m\super k_\rho(\id +hG)\leq \frac{1}{k}\int_{Z\super k_\rho}W_0(\nabla
  u\super k_h(y)S)\ud y.
\end{equation}
A direct computation shows that
\begin{equation*}
  \nabla u\super k_h=\left(\,t\super k\,|\,n\super k\,|\,e_3\,\right)S^{-1} +
  (y_2-\tfrac{1}{2})\big(\,(n\super k)'{\otimes}e_1\,\big) S^{-1}+hG (e_1{\otimes}e_1).
\end{equation*}
Multiplication with the rotation $R\super k:=(t\super k\,|\,n\super
k\,|\,e_3)$ and a short calculation yields the identity
\begin{equation*}
  (R\super k)\transpose\nabla u\super k_h S =(\id+ \tfrac{1}{k} A\super k+h B\super k)
\end{equation*}
with $A\super k(y):=k\kappa\super
k(y_1)(y_2-\tfrac{1}{2})\,\big(e_1{\otimes}e_1\big)$ and $B\super k:=(R\super
  k)\transpose G\,\big(e_1{\otimes}e_1\big)S$.
Since $|y_2-\tfrac{1}{2}|\leq \rho$ and by appealing to \eqref{eq:33} and the
smoothness of $v_0$, we find that
\begin{equation*}
  \max_{y\in Z\super k_\rho}|A\super k(y)|\leq c\qquad\text{and}\qquad|B\super k|\leq c |Ge_1|
\end{equation*}
where $c$ only depends on the radius $\rho$ and the curve $v_0$. Hence, for small $h$  and large $k$ the matrix $(R\super k)\transpose\nabla u\super k_h(y)
S$ is uniformly close to $\id$ and we can appeal to condition (W4) (and
the frame indifference of $W_0$) to
justify the following expansion: For $\tfrac{1}{k}\leq h$ and $h$ sufficiently
small there holds (uniformly in $y\in Z\super k_\rho$)
\begin{align*}
  W_0(\nabla u\super k_hS)&=W_0((R\super k)\transpose\nabla
  u\super k_hS)=W_0(\id+k^{-1}A\super k+hB\super k)\\
&=Q_0(k^{-1}A\super k+hB\super k)+o(h^2)
\end{align*}
where $Q_0$ denotes the quadratic form associated with the expansion of $W_0$ at
identity. Since $\int_{B_\rho}(y_2-\tfrac{1}{2})\ud (y_2,y_3)=0$ by
symmetry, the components of $A\super k$ and $B\super k$ are orthogonal
in $L^2(B_\rho)$ and we get
\begin{equation*}
  \int_{B_\rho}Q_0(k^{-1}A\super k+hB\super k)\ud (y_2,y_3)=
  \frac{1}{k^2}\int_{B_\rho}Q_0(A\super k)\ud (y_2,y_3)+h^2|B_\rho|\,Q_0(B\super k).
\end{equation*}
In conclusion, we have
\begin{equation*}
  \limsup\limits_{k\to
    \infty}\frac{1}{k}\int_0^k\!\!\int_{B_\rho}W_0(\nabla
  u\super k_hS)\ud y\leq
  \begin{cases}
    0&\text{if }Ge_1=0,\\
    2|B_\rho|h^2\max_{B}Q_0(B)+o(h^2)&\text{else,}
  \end{cases}
\end{equation*}
where the maximum is taken over the set of all cluster points of the bounded sequence
$(B\super k)\subset\Rrm 3$.
In combination with \eqref{eq:34} this implies \eqref{eq:35}.
\medskip

\step 3 (Properties of $I\super k_0$). 
Set
\begin{equation*}
  m\super k_0(F):=\inf\limits_{\varphi\in X\super k_0}I\super k_0(\varphi;F).
\end{equation*}
Let $\mathcal
G:=\{\,G=-g(e_2{\otimes}e_2)\,:\,g\in
  (0,1)\,\}$. We claim that
\begin{align}\label{eq:42}
  \forall G\in\Rrm 3,\,k\in\Nn\,&:\,\limsup\limits_{h\to 0}\frac{1}{h^2}m\super k_0(\id+h G)\leq \frac{1}{2} Q_0(G),\\
  \label{eq:44}\forall G\in\mathcal G,\,k\in\Nn\,&:\,\liminf\limits_{h\downarrow 0}\frac{m\super k_0(\id + h
    G)}{h^2}>0,\\
  \label{eq:43}\forall G\in\mathcal G\,&:\,\limsup\limits_{h\downarrow
    0}\limsup\limits_{k\to\infty}\frac{m\super k_0(\id + h
    G)}{h^2}=0.
\end{align}
Argument for \eqref{eq:42}: By definition of $m\super k_0$ and
condition (W4) we have
\begin{equation*}
  \frac{1}{h^2}m\super k_0(\id + h G)\leq
  \frac{1}{h^2k^2}\int_{Z\super k_0}W_0(\id + h G)\ud x=\frac{1}{2}Q_0(G)+o(h^2)
\end{equation*}
and \eqref{eq:42} follows by passing to the $\limsup_h$ on both sides.

Proof of \eqref{eq:44} by contradiction: Suppose that there exist
$k\in\Nn$, $G\in\mathcal G$ and a vanishing sequence of positive
numbers $(h_j)_{j\in\Nn}$ such that 
\begin{equation}\label{eq:48}
  \lim\limits_{j\to \infty}\frac{1}{h^2_j}m\super k_0(\id+h_jG)= 0.
\end{equation}
By definition of $m\super k_0$ there exists a
sequence $(\varphi_j)\subset X\super k_0$ such that 
\begin{equation*}
  \lim\limits_{j\to\infty}\frac{1}{h^2_j}\int_{Z\super k_0}W_0(\id+h_j(G+\nabla \varphi_j))\ud y= 0.
\end{equation*}
By appealing to the non-degeneracy condition (W3) and geometric
rigidity (see Theorem~\ref{thm:rigidity}) there exist rotations $R_j$
such that
\begin{equation}\label{eq:45}
  \lim\limits_{j\to\infty}h_j^{-1}||\id+h_j(G+\nabla\varphi_j)-R_j||_{L^2(Z\super k_0)}  =0.
\end{equation}
Since $\varphi_j$ is periodic in its first
and second variable, we get for $i=1,2$
\begin{align*}
  h_j^{-1}|(\id-R_j)e_i|\leq &h_j^{-1}|(\id+h_jG-R_j)e_i|+|G
  e_i|\\
  =&h_j^{-1}\left|\,|Z\super k_0|^{-1}\int_{Z\super
      k_0}(\id+h_j(G+\nabla\varphi_j)-R_j)e_i\ud x\,\right|+|Ge_i|\\
  \leq
  &h_j^{-1}||(\id+h_j(G+\nabla\varphi_j)-R_j)e_i||_{L^2(Z\super
    k_0)}|Z\super k_0|^{-1/2} + |Ge_i|
\end{align*}
% \begin{multline*}
%   \frac{|(\id -R_h)e_i|^2}{2h^2}\leq   \frac{|(\id + h
%     G-R_h)e_i|^2}{h^2}+|G|^2\\
%   \leq \frac{1}{h^2}\left|\int_{Z\super k_0}(\id +h(
%     G+\nabla\varphi_h)-R_h)e_i\ud y\right|^2+|G|^2.
% \end{multline*}
Hence, in virtue of \eqref{eq:45}, this implies $|(\id -R_j)e_i|\leq c\,h_j$ for $i=1,2$ and a constant $c$
    independent of $j$.
By appealing to the property that $R_j\in\SO 3$, we find that
$|\id-R_j|\leq 2c h_j$. Consequently, by \eqref{eq:45} the sequence $(\nabla\varphi_j)$ is bounded
in $L^2(Z\super k_0)$. Now, we can proceed as in the proof of
Theorem~\ref{thm:1} (Step~2)  and get
\begin{equation*}
  \liminf\limits_{j\to \infty}\frac{1}{h_j^2}m\super k_0(\id + h_jG)\geq
  \inf\limits_{\varphi\in X\super k_0}\frac{1}{k^2}\int_{Z\super
    k_0}Q_0(\sym (G+\nabla\varphi))\ud y.
\end{equation*}
By appealing to the periodicity properties of maps in $X_0\super k$ and since $Q_0$ is positive definite on the subspace of
symmetric matrices, we can easily derive a (non-optimal) lower bound;
namely, the right hand
side is bounded from below by
\begin{equation*}
   c\super k|G:(e_2{\otimes}e_2)|^2\qquad\text{for some positive constant
   }c\super k>0.
\end{equation*}
For $G\in\mathcal G$ this expression is strictly positive --- in
contradiction to \eqref{eq:48}; thus, \eqref{eq:44} follows.
\medskip

The argument for \eqref{eq:43} is more subtle and relies on the
observation that $I\super k_0$ can be written as the elastic energy of a
membrane occupying the slender domain 
\begin{equation*}
  \Omega\super k:=\omega{\times}(0,\tfrac{1}{2k}),\qquad \omega:=(0,1)^2.
\end{equation*}
To make this precise, define the functional
\begin{equation*}
  J\super
  k(u):=\frac{1}{|\Omega\super k|}\int_{\Omega\super k}W_0(\nabla
  u)\ud z,\qquad u\in W^{1,2}(\Omega\super k;\Rrv 3),
\end{equation*}
and notice that with the scaling $\hat u(x)=k\,u(x/k)$ we have
\begin{equation*}
  J\super k(u)=\frac{1}{2k^2}\int_{Z\super k_0}W_0(\nabla \hat u)\ud
  x.
\end{equation*}
 As a consequence it follows that
\begin{multline}\label{eq:46}
  \frac{m_0\super k(\id + h G)}{2}\leq \inf\left\{\,J\super
    k(u)\,:\,\,u\in X\super k_{\id+hG}\right\},\\
\text{where }X\super k_{F}:=\left\{\,u\in W^{1,2}(\Omega\super k;\Rrv 3),\;u(z)=Fz\text{ on
      }\partial\omega{\times}(0,\tfrac{1}{2k})\,\right\}.
\end{multline}
In \cite{Raoult1995} it is shown that $J\super k$ $\Gamma$-converges as
$k\to\infty$ to
a  membrane energy which is zero for short maps, i.e. for deformations $u\,:\,\omega\to\Rrv 3$ with $\nabla u\transpose\nabla
u\leq \id$. Since $(\id+hG)\transpose(\id+hG)\leq \id$ for all
$G\in\mathcal G$ and sufficiently small $h>0$, it is natural to
prove \eqref{eq:43} by appealing to a suitable recovery sequence.
For our purpose it is not necessary to derive the precise
form of the limiting functional. Indeed, the following is sufficient:
For all $G\in\mathcal G$ and $0<h\ll 1$ there exists a sequence
$(u\super k)$, $u\super k\in X\super k_{\id+hG}$ such that
\begin{equation*}
  \limsup\limits_{k\to\infty}J\super k(u\super k)=0.
\end{equation*}
In combination with \eqref{eq:46} this clearly proves the assertion.
The existence of such a sequence is shown for instance in \cite{Conti2003,Raoult1995}.
\medskip

\step 4 (Conclusion). Argument for a). We only prove
\eqref{eq:39}. Since $W_0$ is non-negative and non-degenerate (see
condition (W2)), we have for all $\psi\in W^{1,2}_\per(Y;\Rrv 3)$
\begin{equation*}
\int_Y W(y,F+\nabla\psi)\ud y\geq \int_{Y_0}W_0(F+\nabla\psi)\ud y\geq
c'\int_{Y_0}\dist^2(F+\nabla \psi,\SO 3)\ud y.
\end{equation*}
Here and below, $c',c'',c'''$ are positive constants that only
depend on $W_0$ and on the geometry of $Y_0$.
By geometric rigidity (see
Theorem~\ref{thm:rigidity}) we get (for some $R\in\SO 3$)
\begin{equation*}
\int_Y W(y,F+\nabla\psi)\ud y\geq c'\int_{Y_0}|F+\nabla \psi-R|^2\ud y\stackrel{(\star)}{\geq}
c''|F-R|^2\geq c'''\dist^2(F,\SO 3).
\end{equation*}
Inequality $(\star)$ is valid, since
\begin{equation*}
  \inf_{G,\psi}\int_{Y_0}|G+\nabla\psi(y)|^2\ud y>0
\end{equation*}
where the infimum is taken over all $G\in\Rrm 3$ with $|G|=1$ and $\psi\in W^{1,2}_\per(Y;\Rrv
    3)$ satisfying $\int_0^1\int_0^1\partial_3\psi\ud y_2\ud y_1=0$.

Proof of b) by contradiction. Suppose that there exists $F\in\Rrm 3$
and $k\in\Nn$ such that $W\super k_\hom(F)=0$. Then \eqref{eq:31}
implies that $m\super k_0(F)=m\super k_\rho(F)=0$. As in the proof of
\eqref{eq:35}, we find that $m\super k_0(F)=0$ implies $|Fe_1|=1$, while $m\super k_\rho(F)=0$ implies $|Fe_1|>1$. But
this is a contradiction.

Statement c) is a direct consequence of \eqref{eq:35}, the splitting
in Step~1 and the property that $W(y,\id)=W_0(\id)=0$ for $y\in Y_0$.

Statement d) follows from \eqref{eq:35}, the splitting in Step~1 and \eqref{eq:42}.

Proof of e). Let $G\in\mathcal G$, i.e. $G=-g(e_2{\otimes}e_2)$ where
$g\in(0,1)$. Note that by Step~1 we have $W\super k_\hom(\id)=m\super k_0(\id)+m\super
k_\rho(\id)=m\super k_\rho(\id)$ and therefore
\begin{equation}\label{eq:51}
  W\super k_\hom(\id+h G)-W\super k_\hom(\id)=m\super k_0(\id+ hG).
\end{equation}
With \eqref{eq:42} and \eqref{eq:44} we obtain
\begin{equation*}
  0<\liminf\limits_{h\downarrow 0}\frac{m\super
    k_0(\id + h G)}{h^2}\leq \limsup\limits_{h\downarrow 0}\frac{m\super
    k_0(\id + h G)}{h^2}\leq \frac{1}{2}Q_0(G).
\end{equation*}
Hence, if $\sigma\super k\in\Rrm 3$ fulfills \eqref{eq:47} it must
satisfy $\iprod{\sigma\super k}{G}=0$ and we get
\begin{equation*}
  \liminf\limits_{h\downarrow 0}\frac{1}{h^2}\left(W\super
    k_\hom(\id+hG)-W\super k_\hom(\id)-h\iprod{\sigma\super k}{G}\right)>0.
\end{equation*}
On the other side, the combination of \eqref{eq:51} and \eqref{eq:43}  implies that
\begin{equation*}
  \lim\limits_{h\downarrow 0}\frac{1}{h^2}W\mc(\id+hG)=0,
\end{equation*}
which completes the proof.
 \end{proof}

\section*{Acknowledgements}
We thank S.~Conti for very inspiring discussions. The second author
gratefully acknowledges the hospitality of the Hausdorff Center for
Mathematics in Bonn, as part of this work was accomplished there.

\bibliographystyle{spmpsci}
%\bibliography{bibfile}

\end{document}